\newcommand{\ex}{\text{ex}}
\newtheorem{theorem}{Theorem}[section]
\newtheorem{lemma}[theorem]{Lemma}
\newtheorem{proposition}[theorem]{Proposition}
\newtheorem{claim}{Claim}[theorem]
\newtheorem{conjecture}[theorem]{Conjecture}
\newtheorem{question}[theorem]{Question}
\newtheorem{corollary}[theorem]{Corollary}
\newenvironment{subproof}[1][Proof]{\begin{proof}[#1]}{\end{proof}}
\title{Tight minimum degree conditions for apex-outerplanar minors and subdivisions in graphs and digraphs}
\author{Chun-Hung Liu\thanks{chliu@tamu.edu. Department of Mathematics, Texas A\&M University, USA. Partially supported by NSF under award DMS-1954054 and CAREER award DMS-2144042.}
\and 
Youngho Yoo\thanks{yyoo2@alaska.edu. Department of Mathematics and Statistics, University of Alaska Fairbanks, USA.}}
\date{\today}
\begin{document}

\maketitle

\abstract{
Motivated by Hadwiger's conjecture and related problems for list-coloring, we study graphs $H$ for which every graph with minimum degree at least $|V(H)|-1$ contains $H$ as a minor.
We prove that a large class of apex-outerplanar graphs satisfies this property.
Our result gives the first examples of such graphs whose vertex cover numbers are significantly larger than half of the number of its vertices, which breaks a barrier for attacking related coloring problems via extremal functions, and recovers all known such graphs that have arbitrarily large maximum degree.
Our proof can be adapted to directed graphs to show that if $\vec H$ is the digraph obtained from a directed cycle or an in-arborescence by adding an apex source, then every digraph with minimum out-degree $|V(\vec H)|-1$ contains $\vec H$ as a subdivision or a butterfly minor respectively.
These results provide the optimal upper bound for the chromatic number and dichromatic number of graphs and digraphs that do not contain the aforementioned graphs or digraphs as a minor, butterfly minor and a subdivision, respectively. 
Special cases of our results solve an open problem of Aboulker, Cohen, Havet, Lochet, Moura and Thomass\'{e} and strengthen results of Gishboliner, Steiner and Szab\'{o}.
}

\section{Introduction}

A graph\footnote{All graphs in this paper are simple.} $H$ is a \emph{minor} of another graph $G$ if $H$ is isomorphic to a graph that can be obtained from a subgraph of $G$ by contracting edges.
Seymour proposed the following question in his survey \cite{seymour2016hadwiger} on Hadwiger's conjecture.

\begin{question}[\cite{seymour2016hadwiger}] \label{q:hadwiger}
For which graphs $H$ does it hold that every graph with chromatic number at least $|V(H)|$ contains $H$ as a minor? 
\end{question}

The famous conjecture of Hadwiger \cite{hadwiger1943klassifikation} states that every complete graph satisfies Question \ref{q:hadwiger}.
Since every graph $H$ is a spanning subgraph of the complete graph on $|V(H)|$ vertices, Hadwiger's conjecture is equivalent to the statement that every graph satisfies Question \ref{q:hadwiger}.
Moreover, every graph $H$ on at most six vertices satisfies Question \ref{q:hadwiger} since Hadwiger's conjecture is known to be true for complete graphs with up to six vertices \cite{hadwiger1943klassifikation, wagner1937eigenschaft, appel1977every, robertson1997four, robertson1993hadwiger}.

Beyond graphs on six vertices, Question \ref{q:hadwiger} is only known to hold for few special cases, and the proofs of most of these cases rely on average degree arguments. 
A simple greedy algorithm shows that if a graph $G$ is \emph{$d$-degenerate} (that is, every subgraph of $G$ has a vertex of degree at most $d$), then $G$ has chromatic number at most $d+1$. 
This implies that every graph $G$ with chromatic number at least $t$ contains a subgraph with minimum degree at least $t-1$, hence average degree at least $t-1$.
Therefore, if $\ex_H(n)<\frac{|V(H)|-1}{2}n$, where $\ex_H(n)$ denotes the maximum number of edges of a graph on $n$ vertices with no $H$ minor, then every graph $G$ with  no $H$ minor has average degree less than $|V(H)|-1$ and is hence $(|V(H)|-2)$-degenerate and $(|V(H)|-1)$-colorable.
Examples of graphs $H$ satisfying the condition $\ex_H(n)<\frac{|V(H)|-1}{2}n$ include cycles \cite{erdos1959maximal} and $K_{2,t}$ for all $t\geq 1$ \cite{chudnovsky2011edge}. 
(Woodall \cite{woodall2001list} proved earlier, without the use of extremal functions, that $K_{2,t}$ satisfies Question \ref{q:hadwiger} and its strengthening to list-coloring).

For some other graphs $H$, if $\ex_H(n) < \frac{|V(H)|+c}{2}n$ for some small constant $c$, then it can sometimes still be shown that $H$ satisfies Question \ref{q:hadwiger} with additional arguments. 
This approach has been used successfully for some graphs on 7, 8, and 9 vertices  \cite{jakobsen1970homomorphism,lafferty2022K8,lafferty2022K9}, for the Petersen graph \cite{hendrey2018extremal}, and for $K_{s,t}^*$ (obtained from $K_{s,t}$ by adding all edges joining pairs of vertices in the part of size $s$ in the bipartition) if $t$ is sufficiently large relative to $s$ \cite{kostochka2010ks,kostochka2014minors}. 
It does not seem to be known whether these graphs also satisfy the strengthening of Question \ref{q:hadwiger} to list-coloring or correspondence-coloring.

While average degree arguments have been successful in answering Question \ref{q:hadwiger} for these few cases, there is an inherent limitation to this approach, namely that it is not very helpful for graphs $H$ with large vertex cover number.
A \emph{vertex cover} of a graph $G$ is a set of vertices meeting every edge, and the \emph{vertex cover number} of $G$, denoted by $\tau(G)$, is the minimum size of a vertex cover of $G$. 
Note that $\tau$ is a minor-monotone parameter; that is, if $G'$ is a minor of $G$, then $\tau(G')\leq \tau(G)$.
It follows that $H$ is not a minor of $G=K_{\tau(H)-1,n-\tau(H)+1}$ since $\tau(G)\leq\tau(H)-1$.
On the other hand, we have $\ex_H(n)\geq |E(G)| = (\tau(H)-1)(n-\tau(H)+1) = (\tau(H)-1)n - (\tau(H)-1)^2$.
Hence, the aforementioned average degree approach for Question \ref{q:hadwiger} requires that $\tau(H)-1\leq \frac{|V(H)|+c}{2}$ for some small constant $c$.
In fact, to our knowledge, all graphs $H$ currently known to satisfy Question \ref{q:hadwiger} have $\tau(H) \leq \frac{|V(H)|}{2} + O(1)$. 

In this paper, we study minimum degree conditions that force $H$ as a minor, which gives an alternative approach for Question \ref{q:hadwiger} without being limited by vertex covers. 
Our results on minors give new results on forcing a subdivision of $H$.
Furthermore, our proof can be generalized to directed graphs to give optimal bounds on the minimum out-degree of a directed graph that force a butterfly minor or a subdivision of directed graphs $\vec H$.
Special cases of our results on directed graphs resolve an open problem of Aboulker et al.\ \cite{aboulker2019subdivisions} and strengthen some results of Gishboliner et al.\ in \cite{gishboliner2022dichromatic} and \cite{gishboliner2022oriented}.

\subsection{Minors in undirected graphs}

We begin by considering the following question proposed by Seymour \cite{seymour2017birs}.

\begin{question}[\cite{seymour2017birs}] \label{q:mindegree}
For which graphs $H$ does it hold that every graph with minimum degree at least $|V(H)|-1$ contains $H$ as a minor? 
\end{question}

Since every graph with chromatic number at least $t$ contains a subgraph with minimum degree at least $t-1$, every graph $H$ satisfying Question \ref{q:mindegree} also satisfies Question \ref{q:hadwiger}.
Also note that this minimum degree bound cannot be improved since $K_{|V(H)|-1}$ has minimum degree $|V(H)|-2$ but does not contain $H$ as a minor.

Not all graphs satisfy Question \ref{q:mindegree}.
For example, $K_{3,3}$ and $K_5$ do not satisfy Question \ref{q:mindegree} due to the existence of infinitely many planar graphs with minimum degree five.
By adding apices to such planar graphs, it follows that the graphs $K_{3,3}\lor K_s$ (the graph obtained from a disjoint union of $K_{3,3}$ and $K_s$ by adding all edges between them) and $K_t$ with $t\geq 5$ also do not satisfy Question \ref{q:mindegree}.

Some positive answers for Question \ref{q:mindegree} are known.
Straightforward arguments shows that trees and cycles satisfy Question \ref{q:mindegree}.
Other graphs that satisfy Question \ref{q:mindegree} include: wheels \cite{turner2005}, graphs obtained from a star by adding a new vertex adjacent to all other vertices \cite{mader1971}, and graphs obtained from the 5-vertex path \cite{pelikan1968} (and the $k$-vertex path for $k \in \{6,7\}$ \cite{diwan1971}, respectively) by adding edges between every pair of vertices with distance at most three.
Recently, Norin and Turcotte \cite{norin2025limits} showed that for every graph family $\mathcal{F}$ with strongly sublinear separators, every sufficiently large bipartite graph in $\mathcal{F}$ with bounded maximum degree satisfies Question \ref{q:mindegree}.

Note that all graphs known to satisfy Question \ref{q:mindegree} have vertex cover number at most $\frac{|V(H)|}{2}+O(1)$. 
Among those graphs that have unbounded maximum degree besides trees, there is a vertex adjacent to all other vertices, and only few vertices have unbounded maximum degree.

Given a graph $H$, let $H^+$ denote the graph obtained from $H$ by adding a new vertex, called the \emph{apex}, adjacent to every vertex in $H$.
In this paper, we focus on the special case of Question \ref{q:mindegree} restricted to graphs with an apex.

\begin{question} \label{q:apexmindegree}
For which graphs $H$ does it hold that every graph with minimum degree at least $|V(H^+)|-1$ contains $H^+$ as a minor? 
\end{question}

Again, observe that $K_{2,3}$ and $K_4$ do not satisfy Question \ref{q:apexmindegree} due to the existence of planar graphs with minimum degree five. 
It is thus natural to ask whether outerplanar graphs $H$ satisfy Question \ref{q:apexmindegree}.

The first main result of this paper is that Question \ref{q:apexmindegree} holds for a large class of outerplanar graphs that we call ``contractibly orderable,'' which includes graphs that have vertex cover number significantly greater than $\frac{|V(H)|}{2}+O(1)$ and have unbounded number of vertices of unbounded maximum degree.

An \emph{$H$-minor} in a graph $G$ is a set $\mu=\{\mu(v):v\in V(H)\}$ of vertex-disjoint subsets $\mu(v)$ of $V(G)$ such that $G[\mu(v)]$ is connected for all $v\in V(H)$ and for each edge $xy\in E(H)$, there is an edge of $G$ with one end in $\mu(x)$ and the other end in $\mu(y)$.
The set $\mu(v)$ is called the \emph{branch set} of $v\in V(H)$. Clearly, $H$ is a minor of $G$ if and only if $G$ contains an $H$-minor.

\begin{restatable}{theorem}{mainmindeg}
\label{thm:mainmindeg}
If $H$ is a spanning subgraph of a contractibly orderable graph, then every graph with minimum degree at least $|V(H^+)|-1$ contains an $H^+$-minor such that the branch set of the apex is a singleton.    
\end{restatable}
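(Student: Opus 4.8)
The plan is to fix a single vertex of $G$ to play the role of the apex and then build the branch sets for $V(H)$ greedily along the defining order. Since $|V(H^+)|-1=|V(H)|=:n$, the hypothesis says $\delta(G)\ge n$. Because $H$ is a spanning subgraph of a contractibly orderable graph $H'$, and any $(H')^+$-minor whose apex branch set is a singleton contains an $H^+$-minor whose apex branch set is a singleton, I may assume $H$ itself is contractibly orderable. Finally, if I designate a vertex $a\in V(G)$ to be the apex and decree $\mu(\text{apex})=\{a\}$, then the apex is adjacent in $G$ to a vertex of $\mu(v)$ for every $v\in V(H)$ precisely when each $\mu(v)$ meets $N_G(a)$. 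So it suffices to find, in $G-a$, pairwise disjoint connected branch sets $\{\mu(v):v\in V(H)\}$ realizing every edge of $H$ with $\mu(v)\cap N_G(a)\neq\varnothing$ for all $v$.

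First I would process the vertices of $H$ in a contractible order $v_1,\dots,v_n$, maintaining after the $i$-th step a partial solution: pairwise disjoint connected sets $\mu(v_1),\dots,\mu(v_i)$ in $G-a$ realizing all edges of $H$ among $\{v_1,\dots,v_i\}$, each containing its own distinct neighbor of $a$. To keep the construction feasible under the tight degree bound I would also carry a budget invariant bounding the number of vertices used and the number of neighbors of $a$ consumed so far, so that at each step at least one \emph{unused} neighbor of $a$ together with enough connecting material is guaranteed to remain.

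The extension step is where the ordering does the work. When $v_{i+1}$ is introduced, its neighbors among $\{v_1,\dots,v_i\}$ are few and, by the defining property of a contractible order, arranged so that a single connected set can be attached to the branch sets of all of them at once (this is the sense in which the order is compatible with contractions). Using $\delta(G)\ge n$ I would pick a so-far-unused neighbor $r\in N_G(a)$ to seed $\mu(v_{i+1})$ and then route a connected set from $r$ to the relevant existing branch sets while avoiding every vertex already used; the requirement $\mu(v_{i+1})\cap N_G(a)\neq\varnothing$ is then automatic since $r\in N_G(a)$. The degree hypothesis, applied along this routing, is exactly what supplies the connecting vertices.

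The main obstacle I anticipate is the absence of slack: with $|N_G(a)|$ possibly equal to $n$, \emph{every} neighbor of $a$ must serve as the private anchor of some branch set, so in the worst case none are left over and all connections between branch sets must be realized either through vertices outside $N_G(a)$ or along anchors that happen to be adjacent. Making the greedy step always succeed therefore hinges on a carefully chosen counting and reachability invariant that certifies a usable anchor and a disjoint connecting set at every stage, and on exploiting the precise definition of ``contractibly orderable'' so that each new vertex requires essentially a single contraction-type link rather than several independent ones. I expect this simultaneous satisfaction of disjointness, edge-realization, and apex-reachability under a degree bound with no room to spare to be the technical heart of the argument.
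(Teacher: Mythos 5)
Your proposal has a genuine gap, and the overall architecture is inverted relative to what makes the theorem provable at this tight degree bound. You fix the apex $a$ at the outset and then try to grow branch sets in $G-a$, each anchored at a private neighbor of $a$. Nothing in the hypothesis lets you choose $a$ well: with $\deg(a)=n=|V(H)|$ exactly, every neighbor of $a$ must be the unique anchor of some branch set, and when $v_{i+1}$ must attach to \emph{two} earlier branch sets you need a fresh anchor $r$ together with a connected set from $r$ hitting both of them while avoiding everything already used. You assert that the degree hypothesis ``supplies the connecting vertices,'' but it does not: all $n$ neighbors of $r$ may already lie in used branch sets, and, more globally, the partial structure may be separated from the rest of $G$ by a small cut, so no routing exists. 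Your ``budget invariant'' is never made concrete, and no cardinality count of the kind you gesture at can certify reachability; what is needed is a \emph{connectivity} invariant.

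The paper's proof (Lemma \ref{lem:contractableorder}) resolves both difficulties in ways absent from your sketch. First, the apex is chosen \emph{last}: one grows a set $S$ with $G[S]$ containing a spanning copy of $H[\omega_{[|S|]}]$ (singleton branch sets) while maintaining that $S$ is well-connected, i.e.\ not cut off by any nontrivial separation of order less than $|S|$; once $|S|=|V(H)|$, Menger's theorem yields a vertex $v\notin S$ with $|V(H)|$ paths to $S$ meeting only at $v$, and $v$ becomes the apex while the paths are absorbed into the other branch sets. Second --- and this is where ``contractibly orderable'' is actually used --- when $\omega_{s+1}$ needs a common neighbor of $\phi(\omega_i)$ and $\phi(\omega_j)$ outside $S$ and none exists, one \emph{contracts} the edge $\phi(\omega_i)\phi(\omega_j)$: every vertex outside $S$ keeps degree at least $|V(H)|$ precisely because none is adjacent to both ends, and the contractible-ordering axiom guarantees the contracted $G[S]$ still realizes $H[\omega'_{[s-1]}]$ for some $\omega'\in\Omega$, so the minimal-counterexample induction (decreasing $|S|$ with $|V(G)\setminus S|$ fixed) applies. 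Your reading of the ordering as ensuring that ``a single connected set can be attached to the branch sets of all of them at once'' is not what the definition provides and does not address this failure case. As written, the proposal would not go through.
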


We defer precise definitions to Section \ref{sec:contractibleorderings}, but let us note here that contractibly orderable graphs include many natural outerplanar graphs.
Cacti are very special cases of contractibly orderable graphs (Proposition \ref{prop:cactus}), so Theorem \ref{thm:mainmindeg} affirmatively answers Question \ref{q:mindegree} for wheels and apex-trees, which include all known positive answers to Question \ref{q:mindegree} with unbounded maximum degree.
More generally, contractibly orderable graphs include every maximal outerplanar graph whose weak dual tree is a path (Proposition \ref{prop:weakdualpath}), a complete cubic tree (Proposition \ref{prop:universal}), or a complete binary tree (Proposition \ref{prop:binarytree}).

\begin{corollary} \label{cor:orderex_into}
If $H$ is a cactus or a spanning subgraph of a maximal outerplanar graph whose weak dual tree is either a path, a complete cubic tree, or a complete binary tree, then every graph with minimum degree at least $|V(H^+)|-1$ contains an $H^+$-minor such that the branch set of the apex is a singleton.
\end{corollary}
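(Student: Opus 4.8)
The plan is to derive Corollary~\ref{cor:orderex_into} directly from Theorem~\ref{thm:mainmindeg} by certifying that each graph listed in the corollary lies in, or is spanned by a member of, the contractibly orderable class, so that the theorem applies verbatim; the genuine content is then the structural classification supplied by the four propositions referenced in the excerpt. I would organize the argument by the clauses of the hypothesis. For a cactus $H$, Proposition~\ref{prop:cactus} already asserts that $H$ itself is contractibly orderable, and since every graph is a spanning subgraph of itself, $H$ is trivially a spanning subgraph of a contractibly orderable graph, which is exactly the hypothesis of Theorem~\ref{thm:mainmindeg}. For the remaining clauses, $H$ is given as a spanning subgraph of a maximal outerplanar graph $M$ whose weak dual tree is a path, a complete cubic tree, or a complete binary tree, and Propositions~\ref{prop:weakdualpath}, \ref{prop:universal}, and~\ref{prop:binarytree} respectively guarantee that each such $M$ is contractibly orderable. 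In every case $H$ is therefore a spanning subgraph of a contractibly orderable graph.

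With these memberships in hand, I would simply invoke Theorem~\ref{thm:mainmindeg}. Because $|V(H^+)| = |V(H)| + 1$, the hypothesis ``minimum degree at least $|V(H^+)|-1$'' is precisely the condition of the theorem, and its conclusion---an $H^+$-minor whose apex branch set is a singleton---is exactly the conclusion claimed in Corollary~\ref{cor:orderex_into}. Thus the corollary follows immediately in each of the listed cases, with no additional computation needed at this level of abstraction.

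The substance of the argument is not in the assembly above but in the four structural propositions, so the real obstacle I would anticipate is exhibiting the contractible orderings themselves. I expect the cactus and weak-dual-path cases to admit a transparent linear ordering that follows the block-cut or path structure, processing one block (respectively, one interior triangle of $M$) at a time so that each newly ordered vertex attaches to only a bounded set of previously handled vertices. The complete cubic tree and complete binary tree cases look the most delicate: once the weak dual branches, any ordering of $V(M)$ must keep the \emph{frontier} of partially built branch sets controlled, so that each newly processed vertex need only attach to a small, ``contractible'' collection of earlier branch sets---presumably the precise phenomenon that the definition of contractible orderability is engineered to capture. Verifying that the recursive branching of these trees can be linearized while respecting that constraint, and matching it against the greedy embedding budget forced by the tight degree bound of Theorem~\ref{thm:mainmindeg}, is where I would concentrate the effort.
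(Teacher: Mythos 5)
Your proposal is correct and matches the paper's own (implicit) derivation of this corollary: one simply feeds Propositions \ref{prop:cactus}, \ref{prop:weakdualpath}, \ref{prop:universal} and \ref{prop:binarytree} (together with the observation that a rooted contractible ordering is in particular a contractible ordering) into Theorem \ref{thm:mainmindeg}. One small correction: Proposition \ref{prop:cactus} asserts that a cactus is a \emph{spanning subgraph of} a contractibly orderable graph, not that the cactus itself is contractibly orderable (a bare cycle need not be), but since that is precisely the hypothesis Theorem \ref{thm:mainmindeg} requires, your conclusion is unaffected.
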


Corollary \ref{cor:orderex_into} exhibits the first infinite class of graphs $H$ proven to satisfy Question \ref{q:hadwiger} with vertex cover number significantly greater than $\frac{|V(H)|}{2} + O(1)$. 
For example, let $s\geq 2$ be an integer and let $H$ be the (unique, up to isomorphism) maximal outerplanar graph on $3s$ vertices with maximum degree 4.
Then $H$ has $s$ vertex-disjoint triangles.
Since every vertex cover of $H$ contains at least two vertices of each triangle, we have $\tau(H^+)\geq 2s+1 \geq \frac23|V(H^+)|$. 
On the other hand, the weak dual tree of $H$ is a path, so $H^+$ satisfies Question \ref{q:hadwiger} by Corollary \ref{cor:orderex_into}.

One can ask an analogous problem of Question \ref{q:hadwiger} for list-coloring and correspondence-coloring (which is a generalization of list-coloring and is also known as DP-coloring). See \cite{bernshteyn2017dp,dvovrak2018correspondence} for the definitions of list-coloring and correspondence-coloring.
The list-coloring variant of Hadwiger's conjecture, that every graph with list-chromatic number at least $t$ contains a $K_t$-minor, is false: Voigt \cite{voigt1993list} showed that there exist planar graphs that are not 4-list-colorable, and Bar\'at, Joret and Wood \cite{barat2011disproof} showed that there exist graphs with no $K_{3t+2}$-minor that are not $4t$-list-colorable. 

On the other hand, Theorem \ref{thm:mainmindeg} gives exact results for the list-coloring and the correspondence-coloring variants of Question \ref{q:hadwiger}.
The greedy algorithm also shows that every $d$-degenerate graph is $(d+1)$-list-colorable and, more generally, $(d+1)$-correspondence-colorable. 
Hence Theorem \ref{thm:mainmindeg} immediately implies the following corollary. 

\begin{corollary}
If $H$ is a spanning subgraph of a contractibly orderable graph, then every graph with no $H^+$-minor is $(|V(H^+)|-1)$-list-colorable and $(|V(H^+)|-1)$-correspondence-colorable.
\end{corollary}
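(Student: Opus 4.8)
The plan is to deduce this corollary directly from Theorem \ref{thm:mainmindeg} via the standard degeneracy-to-coloring argument; the only genuine content is the observation that $H^+$-minor-freeness is inherited by subgraphs, which lets us upgrade the single minimum-degree conclusion of Theorem \ref{thm:mainmindeg} into a statement about degeneracy. First I would set $d := |V(H^+)| - 2$, so that the two target bounds read ``$(d+1)$-list-colorable'' and ``$(d+1)$-correspondence-colorable.'' Let $G$ be any graph with no $H^+$-minor. Since a minor of a subgraph of $G$ is itself a minor of $G$, every subgraph $G'$ of $G$ is also $H^+$-minor-free.

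Next I would apply the contrapositive of Theorem \ref{thm:mainmindeg} to each subgraph $G'$ in turn. Because $H$ is a spanning subgraph of a contractibly orderable graph and $G'$ has no $H^+$-minor, $G'$ cannot have minimum degree at least $|V(H^+)|-1 = d+1$. Hence every subgraph $G'$ of $G$ contains a vertex of degree at most $d$, which is precisely the assertion that $G$ is $d$-degenerate.

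Finally I would invoke the greedy coloring argument recalled just before the corollary. Order the vertices of $G$ so that each vertex has at most $d$ neighbors preceding it (possible by $d$-degeneracy), and process them in this order. For ordinary coloring, list-coloring, and correspondence-coloring alike, when a vertex is reached it receives forbidden constraints only from its at most $d$ already-colored neighbors, so from any list of size $d+1$, or any correspondence cover with fibers of size $d+1$, an admissible color remains. This shows $G$ is $(d+1)$-list-colorable and $(d+1)$-correspondence-colorable, completing the argument.

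There is no substantive obstacle here: the corollary is essentially a formal consequence of Theorem \ref{thm:mainmindeg}. The only point that warrants care is that the minimum-degree conclusion of that theorem applies to a single graph, whereas degeneracy is a hereditary condition, so one must explicitly pass to all subgraphs and exploit the minor-monotonicity of the no-$H^+$-minor property to bridge the gap.
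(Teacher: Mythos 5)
Your proof is correct and follows exactly the route the paper intends: contrapose Theorem \ref{thm:mainmindeg} on every subgraph to get $(|V(H^+)|-2)$-degeneracy, then apply the greedy coloring argument for list- and correspondence-coloring. The paper leaves these steps implicit ("immediately implies"), so your write-up simply makes the same argument explicit.
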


\subsection{Subdivisions in undirected graphs}

It is well-known that if $G$ contains an $H$-minor such that the branch set of $v\in V(H)$ is a singleton for all $v$ with degree at least 4 in $H$, then $G$ contains a subdivision of $H$.
Since the branch set of the apex is a singleton in Theorem \ref{thm:mainmindeg}, this implies, for example, that every graph with minimum degree $t$ contains a subdivision of the wheel on $t+1$ vertices, recovering the result of Turner \cite{turner2005}.
This motivates the following analogues of Questions \ref{q:hadwiger} and \ref{q:mindegree} for subdivisions.

\begin{question} \label{q:subdivchro}
For which graphs $H$ does it hold that every graph with chromatic number at least $|V(H)|$ contains a subdivision of $H$?
\end{question}

\begin{question} \label{q:subdivmindeg}
For which graphs $H$ does it hold that every graph with minimum degree at least $|V(H)|-1$ contains a subdivision of $H$?
\end{question}

Since Haj\'{o}s' conjecture, the analogue of Hadwiger's conjecture with respect to subdivisions, is known to be false, Question \ref{q:subdivchro} is more ``meaningful'' than Question \ref{q:hadwiger} in the sense that not all graphs satisfy Question \ref{q:subdivchro}, whereas it is unknown whether there is a graph that does not satisfy Question \ref{q:hadwiger}.

Some aforementioned known positive answers for Question \ref{q:mindegree} are actually also positive answers to Question \ref{q:subdivmindeg}, namely trees and cycles,
wheels \cite{turner2005}, apex-stars \cite{mader1971}, and the graphs obtained from the $k$-vertex path (for $k \in \{5,6,7\}$) by adding edges between every pair of vertices with distance at most three \cite{pelikan1968,diwan1971}.

Theorem \ref{thm:mainmindeg} immediately implies the following positive answers for Question \ref{q:subdivmindeg} and hence for Question \ref{q:subdivchro}. A graph is \emph{subcubic} if every vertex has degree at most 3.

\begin{corollary} \label{cor:D2subdiv_intro}
Let $H$ be a subcubic graph that is a spanning subgraph of a contractibly orderable graph and let $H^*$ denote the graph obtained from $H$ by adding a new vertex $a$ and new edges $av$ for every vertex $v\in V(H)$ with degree at most 2 in $H$. 
Then every graph with minimum degree at least $|V(H^*)|-1$ contains a subdivision of $H^*$.
\end{corollary}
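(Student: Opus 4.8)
The plan is to reduce the corollary directly to Theorem \ref{thm:mainmindeg} together with the principle recalled in the text above: if $G$ contains an $H'$-minor in which the branch set of every vertex of degree at least $4$ in $H'$ is a singleton, then $G$ contains a subdivision of $H'$. The crucial structural observation is that in $H^*$ the \emph{only} vertex that can have degree exceeding $3$ is the newly added vertex $a$. Indeed, since $H$ is subcubic, every $v \in V(H)$ has $\deg_H(v) \le 3$; the vertex $a$ is joined in $H^*$ only to those $v$ with $\deg_H(v) \le 2$, so such $v$ satisfy $\deg_{H^*}(v) = \deg_H(v) + 1 \le 3$, while every $v$ with $\deg_H(v) = 3$ keeps $\deg_{H^*}(v) = 3$. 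Hence $\deg_{H^*}(v) \le 3$ for all $v \in V(H)$, and $a$ is the unique vertex of $H^*$ that may have degree at least $4$. I would also record that $|V(H^*)| = |V(H)| + 1 = |V(H^+)|$, so the hypothesis $\delta(G) \ge |V(H^*)| - 1$ is exactly $\delta(G) \ge |V(H^+)| - 1$.

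Next, since $H$ is a spanning subgraph of a contractibly orderable graph, Theorem \ref{thm:mainmindeg} applies and yields, for every $G$ with $\delta(G) \ge |V(H^+)| - 1$, an $H^+$-minor $\mu$ in which the branch set of the apex is a singleton; I identify this apex with $a$. Now $H^*$ and $H^+$ have the same vertex set, and $E(H^*) \subseteq E(H^+)$: all edges of $H$ lie in both, and the apex of $H^*$ is adjacent to a subset of the vertices the apex of $H^+$ is adjacent to. Consequently the very same family $\mu$ witnesses an $H^*$-minor in $G$ — fewer inter-branch-set adjacencies are required — and $\mu(a)$ is still a singleton.

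Finally I would invoke the minor-to-subdivision principle applied to this $H^*$-minor $\mu$. Every vertex of $H^*$ of degree at least $4$ is the apex $a$, whose branch set $\mu(a)$ is a singleton; every other branch set belongs to a vertex of degree at most $3$, for which the at most three required paths can be routed inside the connected branch set through a single common branch vertex (a spanning tree of the branch set has at most one branch node once at most three attachment points are designated). Therefore $G$ contains a subdivision of $H^*$, as claimed. I do not anticipate a genuine obstacle here: the entire weight of the statement sits in Theorem \ref{thm:mainmindeg}, and what remains is the degree bookkeeping in the first paragraph together with the standard conversion of a minor with singleton high-degree branch sets into a topological minor.
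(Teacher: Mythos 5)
Your proposal is correct and follows essentially the same route as the paper: the paper derives this corollary immediately from Theorem \ref{thm:mainmindeg} via the observation that $H^*$ is a spanning subgraph of $H^+$ whose only possible vertex of degree at least $4$ is the apex, whose branch set is a singleton, combined with the standard minor-to-subdivision principle stated just before the corollary. Your degree bookkeeping and the conversion argument match what the paper intends.
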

In particular, if $H$ has maximum degree at most 2, then $H^+$ satisfies Question \ref{q:subdivmindeg}.
Note that this includes wheels.

\subsection{Butterfly minors in directed graphs}
The proof of Theorem \ref{thm:mainmindeg} can be adapted to yield more general results for directed graphs (henceforth \emph{digraphs}\footnote{All digraphs in this paper are simple. That is, every digraph does not contain loops and does not contain two distinct arcs with the same tails and heads. Note that (simple) digraphs can contain two arcs that have the same pair of ends as long as they have different directions.}).
The \emph{dichromatic number} of a digraph $\vec{G}=(V,E)$ is an analogue of the chromatic number, defined to be the smallest integer $t$ such that there exists a partition of $V$ into $t$ parts $V_1,\dots,V_t$ such that $\vec{G}[V_i]$ has no directed cycle for each $i\in[t]$.
Again, the greedy algorithm can be applied to show that every digraph with dichromatic number at least $t+1$ contains a subdigraph with minimum out-degree (and in-degree) at least $t$. 

Let us first consider the directed analogues of Questions \ref{q:hadwiger} and \ref{q:mindegree}. One popular notion of minors of digraphs is the butterfly minor:
A digraph $\vec H$ is a \emph{butterfly minor} of a digraph $\vec G$ if a digraph isomorphic to $\vec H$ can be obtained from $\vec G$ by a sequence of the following operations: deleting a vertex, deleting an arc, and contracting an arc $(u,v)$ where $u$ has out-degree 1 or $v$ has in-degree 1 (and deleting all resulting loops and parallel arcs).
In this case, for each $v\in V(\vec H)$, the \emph{branch set} of $v$ is the set of vertices of $\vec G$ that were contracted into the vertex corresponding to $v$.
Note that if $\vec H$ is a butterfly minor of $\vec G'$ and $\vec G'$ is a butterfly minor of $\vec G$, then $\vec H$ is a butterfly minor of $\vec G$.
\begin{question}
    \label{q:dichrobutmin}
    For which digraphs $\vec H$ does it hold that every digraph with dichromatic number at least $|V(\vec H)|$ contains $\vec H$ as a butterfly minor?
\end{question}
\begin{question}
    \label{q:outdegbutmin}
    For which digraphs $\vec H$ does it hold that every digraph with minimum out-degree at least $|V(\vec H)|-1$ contains $\vec H$ as a butterfly minor?
\end{question}

The \emph{biorientation} of a graph $G$ is the digraph obtained by replacing each edge of $G$ by two arcs with opposite directions.
It is easy to see that the chromatic number of $G$ equals the dichromatic number of $\vec{G}$, and the minimum degree of $G$ equals the minimum out-degree of $\vec{G}$.
Moreover, if $\vec{G}$ contains a digraph $\vec H$ as a subdigraph, then $G$ contains the underlying graph of $\vec H$ as a subgraph. 
So positive answers to Questions \ref{q:dichrobutmin} and \ref{q:outdegbutmin} give positive answers to Questions \ref{q:subdivchro} and \ref{q:subdivmindeg}, respectively.

Again, note that the bounds on the dichromatic number and the minimum out-degree in Questions \ref{q:dichrobutmin} and \ref{q:outdegbutmin} cannot be improved since the biorientation of $K_{|V(\vec H)|-1}$ has dichromatic number $|V(\vec H)|-1$ and minimum out-degree $|V(\vec H)|-2$, but does not contain $\vec H$ as a butterfly minor.

An \emph{in-arborescence} (respectively, \emph{out-arborescence}) is a directed tree with a vertex, called its \emph{root}, such that every arc is oriented towards (respectively, away from) the root. 
Given a digraph $\vec H$, let $\vec H^+$ denote the digraph obtained from $\vec H$ by adding a new vertex, called the \emph{apex source}, and new arcs from the new vertex to all other vertices.
Specializing our proof of Theorem \ref{thm:mainmindeg} to apex-trees immediately yields the following positive answer to Question \ref{q:outdegbutmin} (and hence to Question \ref{q:dichrobutmin}).

\begin{restatable}{theorem}{butterflydeg}
\label{thm:butterfly_deg}
If $\vec H$ is an in-arborescence, then every digraph with minimum out-degree at least $|V(\vec H^+)|-1$ contains $\vec H^+$ as a butterfly minor such that the branch set of the apex source is a singleton.  
\end{restatable}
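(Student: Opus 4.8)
The plan is to mirror the proof of Theorem~\ref{thm:mainmindeg} for apex-trees, keeping careful track of orientations. Write $k=|V(\vec H)|$, so the hypothesis is that $\vec G$ has minimum out-degree at least $k$. First I would choose an arbitrary vertex $a_0$ of $\vec G$ to be the branch set of the apex source; since this branch set is forced to be a singleton, the apex arcs $a\to v$ must be realized by genuine arcs of $\vec G$ leaving $a_0$, so every other branch set must contain an out-neighbour of $a_0$. As $a_0$ has at least $k$ out-neighbours and $\vec H$ has exactly $k$ vertices, the natural target is a model in which each branch set $B_v$ contains a distinct out-neighbour $u_v\in N^+(a_0)$, serving as the \emph{in-port} that receives the apex arc $a_0\to u_v$. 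The problem thus reduces to building, inside $\vec G-a_0$, vertex-disjoint branch sets realizing the in-arborescence $\vec H$, each seeded at such a $u_v$.

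The second step is to exploit the orientations of an in-arborescence. Every non-root vertex of $\vec H$ has out-degree exactly $1$ (its arc to the parent), so I would build each branch set $B_v$ as a directed path seeded at $u_v$, oriented so that its head $o_v$ is the \emph{out-port} from which the single arc to the parent's branch set leaves. Such a directed path is an in-arborescence toward $o_v$ and hence butterfly-contracts to a single vertex: after deleting all arcs other than the path arcs and the connecting arcs, one contracts from the source end, where each vertex in turn has out-degree $1$ (the incoming apex and child arcs are in-arcs and do not obstruct this). With this normalization, a valid butterfly model amounts to disjoint directed paths $B_v$ seeded at distinct out-neighbours of $a_0$, together with, for each non-root $v$, an arc from the head of $B_v$ into $B_{\mathrm{par}(v)}$. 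I would construct these by processing the vertices of $\vec H$ in a contractible ordering, which exists because trees are cacti (Proposition~\ref{prop:cactus}); at each step the current path is extended along a fresh out-neighbour as long as one exists, and otherwise connected to the already-processed parent's branch set.

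The hard part will be the routing, i.e.\ guaranteeing that each newly built path reaches the branch set of the \emph{correct} parent while staying disjoint from $a_0$ and from all previously built branch sets, and doing so under the tight bound $k$. The danger is that a greedy path wanders indefinitely through fresh vertices without ever meeting its target. The mechanism that controls this is the accounting forced by the out-degree bound together with the ordering: whenever the head $w$ of a path cannot be extended to a fresh vertex, all at least $k$ out-neighbours of $w$ already lie among the vertices used so far, and the contractible ordering is engineered precisely so that the set of used vertices, \emph{excluding} the target parent branch set, has fewer than $k$ elements. By pigeonhole some out-neighbour of $w$ then lies in the target, supplying the required child$\to$parent connecting arc. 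Making this rigorous---verifying that the ordering keeps every partial model small enough for this count to fire at exactly the right moment, for every vertex and every shape of in-arborescence---is the technical heart of the argument and is exactly what is specialized from the proof of Theorem~\ref{thm:mainmindeg}. Finally I would recheck the orientations: the in-ports receive the apex arcs $a_0\to u_v$ using $a_0$'s out-arcs, each branch-set path is oriented toward its out-port so it is butterfly-contractible, and each connecting arc runs from a child's out-port into its parent's branch set; contracting every $B_v$ then yields $\vec H^+$ as a butterfly minor in which the apex source is represented by the single vertex $a_0$.
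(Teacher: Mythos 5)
Your high-level plan (grow a model of the in-arborescence one vertex at a time, using the out-degree bound to find the next attachment arc) is in the right spirit, but two of its load-bearing steps do not survive scrutiny, and they are exactly the points where the paper's proof (Lemma \ref{lem:apexarbor}) does something different. The first and most serious gap is the accounting. You build each branch set as a directed path of a priori unbounded length, yet your termination mechanism requires that when the head $w$ of the current path gets stuck, ``the set of used vertices, excluding the target parent branch set, has fewer than $k$ elements.'' These two things are incompatible: the used set contains all previously built paths, so nothing bounds it by $k$, and the pigeonhole at $w$ never fires. The paper escapes this by never letting the partial model grow: it maintains a set $S$ of \emph{actual vertices} (one per processed vertex of $\vec H$) that is \emph{well-in-connected}, and whenever a directed separation $(A,B)$ of order at most $|S|$ appears --- which is precisely the situation in which long connecting paths would be forced --- it contracts the $|S|$ Menger paths from $A\cap B$ to $S$ and deletes the rest of $A$, passing to a butterfly minor of $\vec G$ in which the partial model is again a set of $|S|$ singletons (Claim \ref{lem:apexarbor:claim1}). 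The existence of the next attachment arc $(u,v)$ with $u\notin S$ then comes from a global connectivity statement (if no such $u$ exists, $(S,V(\vec G)\setminus\{v\})$ is a directed separation of order $|S|-1$, contradicting well-in-connectedness), not from a local out-degree count at a stuck path head. Without this ``contract and recurse'' mechanism, or some substitute for it, your greedy routing has no reason to terminate and no reason to keep the branch sets disjoint.

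The second gap is that you fix the apex $a_0$ \emph{first} and then demand that every branch set contain a distinct out-neighbour of $a_0$ and that every path avoid $a_0$ and all reserved seeds. This creates disjointness obligations you never discharge (an early path may pass through, or be forced through, an out-neighbour of $a_0$ reserved for a later branch set), and it is not justified that an arbitrary $a_0$ admits such a model at all. The paper does the opposite: it first restricts to a terminal strongly connected component, seeds $S$ with a single vertex playing the \emph{root} of $\vec H$, grows the model towards the leaves (each new vertex contributes its unique out-arc into $S$, matching the in-arborescence orientation), and only once $|S|=t$ does it produce the apex, as a vertex $v\notin S$ together with $t$ directed paths from $v$ to $S$ sharing only $v$, which exist by Menger's theorem because $S$ is well-in-connected; contracting those paths towards $S$ realizes all $t$ apex arcs simultaneously. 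Choosing the apex last is what makes the apex arcs free; choosing it first turns them into a $t$-linkage problem that your proposal does not solve.
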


\subsection{Subdivisions in directed graphs}
Theorem \ref{thm:butterfly_deg} has interesting consequences for subdivisions.
Subdivisions of digraphs are significantly more difficult to obtain compared to the undirected setting.
For example, it is known that every (undirected) graph $H$ is a subdivision of every graph with sufficiently large minimum degree \cite{mader1967homomorphieeigenschaften}, whereas there exist digraphs with arbitrarily large minimum out-degree and in-degree that do not contain the biorientation of $K_3$ as a subdivision \cite{thomassen1985even}.
The following conjecture of Mader from 1985 remains open.

\begin{conjecture}[\cite{mader1985degree}]\label{conj:mader}
If $\vec H$ is an acyclic digraph, then there exists an integer $d(\vec H)$ such that every digraph with minimum out-degree at least $d(\vec H)$ contains a subdivision of $\vec H$.
\end{conjecture}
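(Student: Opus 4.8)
The plan is to prove the conjecture for \emph{all} acyclic digraphs by first reducing to the apex-source setting that is the focus of this paper, and then building the subdivision one sink at a time. The key point is that the general case is \emph{equivalent} to, rather than genuinely harder than, the dominating-source case. Given any acyclic digraph $\vec H$, form $\vec H^+$ by adding an apex source $s$ with an arc to every vertex of $\vec H$; prefixing a topological order of $\vec H$ by $s$ shows $\vec H^+$ is again acyclic. If a digraph $\vec G$ with minimum out-degree at least $d(\vec H^+)$ contains a subdivision of $\vec H^+$, then deleting the branch vertex of $s$ together with the paths emanating from it leaves a subdivision of $\vec H$ inside $\vec G$; hence $d(\vec H)\le d(\vec H^+)$. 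It therefore suffices to bound $d(\vec H)$ for acyclic $\vec H$ possessing a source $s$ that dominates every other vertex, and this source is automatically unique since every other vertex receives an arc from $s$ and so has positive in-degree. This is exactly the apex-source regime studied here: Theorem~\ref{thm:butterfly_deg} and its subdivision analogue treat the cases where the part below the source is an in-arborescence or a directed cycle, and the remaining task is to allow an \emph{arbitrary} acyclic digraph there while still producing a genuine subdivision.

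Fix such an $\vec H$ with dominating source $s$ and $|V(\vec H)|=n$, and let $\vec G$ have minimum out-degree at least a large $D=D(n)$ to be determined. I would induct by repeatedly deleting a sink of $\vec H$: choose a sink $t$ (a vertex of out-degree $0$, which exists by acyclicity), apply induction to the acyclic dominating-source digraph $\vec H-t$ to obtain a subdivision $S$ of $\vec H-t$ in $\vec G$, and then re-attach $t$. Throughout I would maintain the invariant that the branch vertex $b_s$ of $s$ reaches a large ``reservoir'' of high-out-degree vertices disjoint from the paths already used: since $\vec G$ has minimum out-degree at least $D$, greedily growing out-paths from $b_s$ produces such a reservoir whose size grows with $D$. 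Moreover, by the averaging identity $\sum_v d^+(v)=\sum_v d^-(v)\ge D\,|V(\vec G)|$, the digraph $\vec G$ contains many vertices of in-degree at least $D$. Such high-in-degree vertices are essential targets, because the branch vertex chosen for $t$ must absorb one incoming path per in-neighbor, so it needs in-degree at least $m:=|N^-(t)|$.

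The heart of the argument, and the step I expect to be the main obstacle, is a \emph{reconvergent linkage} lemma: given the branch vertices $b_1,\dots,b_m$ of the in-neighbors $N^-(t)=\{u_1,\dots,u_m\}$ and the set $F$ of vertices already used by $S$, find a vertex $w\notin F$ and internally disjoint directed paths $P_1,\dots,P_m$ with $P_i$ from $b_i$ to $w$ and internally avoiding $F$; setting $b_t:=w$ and adding the $P_i$ then extends $S$ to a subdivision of $\vec H$. Routing $m$ disjoint paths \emph{into a common head} while avoiding a bounded set is a many-to-one linkage, more structured than arbitrary directed linkage but still where the difficulty concentrates: high out-degree guarantees that each $b_i$ reaches a large set, yet in an arbitrarily large $\vec G$ these reachable sets may simply fail to reconverge, and forcing reconvergence from out-degree alone is precisely the phenomenon that breaks down for non-acyclic targets (cf.\ Thomassen's examples). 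The plan is to force reconvergence by exploiting the dominating source: every $b_i$ is itself reached from $b_s$ within $S$, so $b_s$ and the reservoir furnish a common ``funnel'' through which the $m$ paths can be steered to meet at a high-in-degree vertex $w$, with the sinks ordered by a reverse topological order of $\vec H$ so that no already-placed path must be crossed. Making this funnelling quantitative, so that a single bound $D(n)$ works uniformly over all $m\le n$ and over the growing forbidden set $F$, is the crux. The resulting $D(n)$ would be far from the conjectured optimum, but since the conjecture only asserts the existence of some finite $d(\vec H)$, any such bound suffices.
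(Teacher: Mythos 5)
You set out to prove Conjecture~\ref{conj:mader}, but note that this is Mader's 1985 conjecture, which the paper explicitly states remains open; the paper proves only special cases (Theorem~\ref{thm:butterfly_deg} via butterfly minors, and Corollary~\ref{cor:inarborsubdiv} and Theorem~\ref{thm:subdiv_wheel_mid_deg_intro} for subdivisions), so there is no proof in the paper to compare against, and your proposal does not close the conjecture either. The gap is the one you yourself flag: the ``reconvergent linkage'' lemma is left entirely unproven, and it is not a technical detail but essentially the whole content of the conjecture. Routing $m$ internally disjoint directed paths from prescribed vertices $b_1,\dots,b_m$ into a common head $w$ is exactly the phenomenon that minimum out-degree fails to force: large out-degree gives each $b_i$ a large reachable set, but nothing makes those sets reconverge at a single vertex, and the dominating source supplies arcs \emph{out of} $b_s$, which cannot funnel paths \emph{into} any vertex. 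Already the case $m=2$ with an empty forbidden set is the two-blocks-cycle problem of Aboulker et al., which required substantial work and is resolved here only with an apex attached (Corollary~\ref{cor:inarborsubdiv}); your sink-by-sink scheme applied to the transitive tournament on $5$ vertices would need such a linkage with $m=4$, while the paper notes that even this single instance of Conjecture~\ref{conj:mader} is open. A lemma whose special cases include the open cases of the conjecture cannot be deferred as ``the crux to be made quantitative.''

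Two further concrete defects. First, the forbidden set $F$ consists of the branch vertices \emph{and all internal vertices of the subdivision paths already routed}, and those paths may be arbitrarily long; hence $|F|$ is not bounded by any function of $|V(\vec H)|$, and no bound $D(n)$ on out-degree can by counting alone guarantee that a path can even avoid $F$. The paper's own technique circumvents exactly this issue not by degree counting but by well-in-connectedness and minimal-counterexample contractions (Lemma~\ref{lem:apexarbor}); however, that machinery yields butterfly minors, and a butterfly minor yields a subdivision only when every vertex with a non-singleton branch set is subcubic --- which is precisely why Corollary~\ref{cor:inarborsubdiv} restricts to in-arborescences with all in-degrees at most $2$, and why it cannot be extended to your arbitrary acyclic $\vec H$. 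Second, your reduction $d(\vec H)\le d(\vec H^+)$ is correct but vacuous: since $\vec H^+$ ranges over essentially all acyclic digraphs with a dominating source as $\vec H$ ranges over all acyclic digraphs, the dominating-source case is equivalent in difficulty to the full conjecture, so the reduction simplifies nothing. Your outline correctly identifies where the difficulty concentrates, but the step you need is an open problem, not a lemma.
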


It is straightforward to show that out-arborescences satisfy Conjecture \ref{conj:mader}.
Mader \cite{mader1996topological} showed in 1996 that every digraph with minimum out-degree at least 3 contains a subdivision of the transitive tournament on 4 vertices.
It is not known whether the transitive tournament on 5 vertices satisfies Conjecture \ref{conj:mader}.
Recently, in-arborescences, oriented paths \cite{aboulker2019subdivisions}, and oriented cycles \cite{gishboliner2022oriented} were shown to satisfy Conjecture \ref{conj:mader}.

Note that Conjecture \ref{conj:mader} only considers the existence of the function $d(\vec H)$.
In this paper, we give new positive answers to Conjecture \ref{conj:mader}, with optimal function $d(\vec H)$, by considering the following directed analogues of Questions \ref{q:subdivchro} and \ref{q:subdivmindeg}.

\begin{question} \label{q:subdivchro_di}
For which digraphs $\vec{H}$ does it hold that every digraph with dichromatic number at least $|V(\vec H)|$ contains a subdivision of $\vec{H}$?
\end{question}

\begin{question} \label{q:subdivmindeg_di}
For which digraphs $\vec{H}$ does it hold that every digraph with minimum out-degree at least $|V(\vec H)|-1$ contains a subdivision of $\vec{H}$?
\end{question}

It is known that Question \ref{q:subdivmindeg_di} holds for oriented paths and directed cycles \cite{aboulker2019subdivisions} and that Question \ref{q:subdivchro_di} holds for orientations of cacti and biorientations of forests \cite{gishboliner2022dichromatic}.

Let us now discuss an application of Theorem \ref{thm:butterfly_deg}.
A vertex $v$ in a digraph $\vec H$ is called \emph{subcubic} if $v$ has total degree at most 3 and has in-degree and out-degree at most 2 in $\vec H$. 
It is not difficult to see that, in a butterfly minor, every branch set is either an in-arborescence, an out-arborescence, or the vertex-disjoint union of an in-arborescence and an out-arborescence joined by an arc from the root of the in-arborescence to the root of the out-arborescence.
This implies that if $\vec H$ is a butterfly minor of $\vec G$ and the branch set of $v\in V(\vec H)$ is a singleton for all $v$ that is \emph{not} subcubic in $\vec H$, then $\vec G$ contains a subdivision of $\vec H$.
Since the branch set of the apex source is a singleton in Theorem \ref{thm:butterfly_deg}, we immediately obtain the following positive answer to Question \ref{q:subdivmindeg_di} (and hence to Question \ref{q:subdivchro_di} and to Conjecture \ref{conj:mader}), analogously to Corollary \ref{cor:D2subdiv_intro}.
Note that a vertex in an in-arborescence is subcubic if and only if it has in-degree at most 2.

\begin{corollary}\label{cor:inarborsubdiv}
    Let $\vec H$ be an in-arborescence such that every vertex has in-degree at most 2. Let $\vec H^*$ denote the digraph obtained from $\vec H$ by adding a new vertex $a$ and new arcs $(a,v)$ for every vertex $v\in V(\vec H)$ with in-degree at most 1 in $\vec H$.
    Then every digraph with minimum out-degree at least $|V(\vec H^*)|-1$ contains a subdivision of $\vec H^*$.
\end{corollary}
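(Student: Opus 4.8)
The plan is to deduce this corollary from Theorem~\ref{thm:butterfly_deg} together with the observation, recorded just before the statement, that a butterfly minor in which every non-subcubic vertex has a singleton branch set yields a subdivision. The one point to get right is that we must produce a subdivision of $\vec H^*$ (which omits the apex arcs into the in-degree-$2$ vertices) rather than of the full apex digraph $\vec H^+$, because it is precisely this omission that keeps every original vertex subcubic.

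First I would note that $\vec H^*$ and $\vec H^+$ have the same number of vertices, namely $|V(\vec H)|+1$, since each is obtained from $\vec H$ by adding a single apex. Hence a digraph $\vec G$ with minimum out-degree at least $|V(\vec H^*)|-1 = |V(\vec H^+)|-1$ satisfies the hypothesis of Theorem~\ref{thm:butterfly_deg}. Applying that theorem, which is available because $\vec H$ is an in-arborescence, produces a butterfly minor model of $\vec H^+$ in $\vec G$ in which the branch set of the apex source is a singleton.

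Next I would observe that $\vec H^*$ is obtained from $\vec H^+$ by deleting arcs only: both have vertex set $V(\vec H)\cup\{a\}$, and the arc set of $\vec H^*$ consists of all arcs of $\vec H$ together with the apex arcs $(a,v)$ for those $v$ of in-degree at most $1$, all of which are arcs of $\vec H^+$. Since deleting arcs never enlarges or merges branch sets, keeping the same branch sets and retaining only the connecting arcs needed for the arcs of $\vec H^*$ turns the model above into a butterfly minor model of $\vec H^*$ in $\vec G$; in particular the branch set of the apex $a$ remains a singleton.

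Finally I would carry out the degree bookkeeping to verify the subcubicity hypothesis for $\vec H^*$. Every original vertex $v\in V(\vec H)$ has out-degree at most $1$ in $\vec H^*$ (its unique arc toward the root, or none if $v$ is the root), and its in-degree in $\vec H^*$ is at most $2$: if $v$ has in-degree $2$ in $\vec H$ then no apex arc enters it, while if $v$ has in-degree at most $1$ in $\vec H$ then the single apex arc raises its in-degree to at most $2$. Thus each such $v$ has in-degree at most $2$, out-degree at most $1$, and total degree at most $3$, so it is subcubic in $\vec H^*$. The only possibly non-subcubic vertex of $\vec H^*$ is the apex $a$, whose branch set is a singleton. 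By the quoted fact, $\vec G$ therefore contains a subdivision of $\vec H^*$. The main (and essentially only) obstacle is identifying the correct target $\vec H^*\subseteq\vec H^+$; once that choice is made, the argument reduces to the degree count above plus a single application of Theorem~\ref{thm:butterfly_deg}.
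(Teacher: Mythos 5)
Your proposal is correct and follows essentially the same route as the paper: apply Theorem~\ref{thm:butterfly_deg} to obtain $\vec H^+$ as a butterfly minor with a singleton apex branch set, pass to the spanning subdigraph $\vec H^*$ by discarding the apex arcs into in-degree-$2$ vertices, and check that every vertex of $\vec H$ is then subcubic in $\vec H^*$ so that the quoted butterfly-minor-to-subdivision fact applies. The degree bookkeeping you carry out is exactly the content of the paper's remark that a vertex of an in-arborescence is subcubic if and only if it has in-degree at most $2$.
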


Corollary \ref{cor:inarborsubdiv} resolves a problem of Aboulker et al. \cite[Problem 25]{aboulker2019subdivisions} on oriented cycles with two blocks, in a stronger form.
Let $C(k_1,k_2)$ denote the union of two internally disjoint directed paths, one of length $k_1$ and one of length $k_2$, with the same initial vertex $a$ and same terminal vertex $b$.
The problem of determining the optimal minimum out-degree condition to guarantee a subdivision of $C(k_1,k_2)$ was posed in \cite[Problem 25]{aboulker2019subdivisions}, where it was proven that every digraph with minimum out-degree at least $2(k_1+k_2)-1$ contains a subdivision of $C(k_1,k_2)$.
In \cite{gishboliner2022oriented}, this bound was improved to $k_1+3k_2-5$ if $k_1\geq k_2\geq 2$.
Observe that $C(k_1,k_2)\setminus a$ is an in-arborescence with every vertex having in-degree 1 except $b$, which has in-degree 2.
Therefore, Corollary \ref{cor:inarborsubdiv} immediately implies the following corollary, which provides the optimal answer to \cite[Problem 25]{aboulker2019subdivisions} in a stronger form.

\begin{corollary}
    Let $\vec H^*$ denote the digraph obtained from $C(k_1,k_2)$ by adding arcs from $a$ to every vertex other than $b$.
    Then every digraph with minimum out-degree at least $k_1+k_2-1 = |V(\vec H^*)|-1$ contains a subdivision of $\vec H^*$.
\end{corollary}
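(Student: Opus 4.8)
The plan is to deduce this statement directly from Corollary~\ref{cor:inarborsubdiv}; the only work is to check that the in-arborescence hypothesis is met and that the two descriptions of $\vec H^*$ agree. First I would fix notation for $C(k_1,k_2)$: write the two internally disjoint directed paths from the common initial vertex $a$ to the common terminal vertex $b$ as $a\to w^{(1)}_1\to\cdots\to w^{(1)}_{k_1-1}\to b$ and $a\to w^{(2)}_1\to\cdots\to w^{(2)}_{k_2-1}\to b$, so that $C(k_1,k_2)$ has exactly $k_1+k_2$ vertices, namely $a$, $b$, and the $k_1+k_2-2$ internal vertices.

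Next I would verify that $\vec H:=C(k_1,k_2)\setminus a$ is an in-arborescence meeting the hypothesis of Corollary~\ref{cor:inarborsubdiv}. Deleting $a$ removes the two arcs leaving $a$, so the remaining digraph consists of the two directed paths $w^{(1)}_1\to\cdots\to b$ and $w^{(2)}_1\to\cdots\to b$, which share only the vertex $b$; all arcs are oriented towards $b$, so this is an in-arborescence rooted at $b$. Its root $b$ has in-degree $2$ while every other vertex has in-degree at most $1$, so every vertex has in-degree at most $2$, as required by Corollary~\ref{cor:inarborsubdiv}.

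Finally I would identify the two constructions of $\vec H^*$. The vertices of $\vec H$ of in-degree at most $1$ are precisely the $k_1+k_2-2$ internal vertices of the two paths, the root $b$ being the only vertex of in-degree $2$. Hence the digraph produced by Corollary~\ref{cor:inarborsubdiv} adds back a vertex $a$ together with arcs from $a$ to every internal vertex, that is, to every vertex of $C(k_1,k_2)$ except $b$. Since the arcs $a\to w^{(1)}_1$ and $a\to w^{(2)}_1$ were already present in $C(k_1,k_2)$ and the remaining arcs from $a$ are exactly those added in the statement, this digraph is identical to the $\vec H^*$ defined in the corollary statement. In particular $|V(\vec H^*)|=k_1+k_2$.

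Applying Corollary~\ref{cor:inarborsubdiv} to $\vec H$ then guarantees a subdivision of $\vec H^*$ in every digraph with minimum out-degree at least $|V(\vec H^*)|-1=k_1+k_2-1$, which is the claim. There is essentially no obstacle here beyond this bookkeeping: the only point requiring care is the identification of the two descriptions of $\vec H^*$, and all the genuine difficulty has already been absorbed into Corollary~\ref{cor:inarborsubdiv} and ultimately into Theorem~\ref{thm:butterfly_deg}.
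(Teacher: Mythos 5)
Your proposal is correct and matches the paper's argument exactly: the paper also observes that $C(k_1,k_2)\setminus a$ is an in-arborescence rooted at $b$ in which every vertex has in-degree at most $2$ and only $b$ has in-degree $2$, and then applies Corollary \ref{cor:inarborsubdiv}. Your bookkeeping of the vertex count and the identification of the two descriptions of $\vec H^*$ is accurate.
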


With a slight modification of the proofs of Theorems \ref{thm:mainmindeg} and \ref{thm:butterfly_deg}, we further obtain certain orientations of wheels as positive answers to Question \ref{q:subdivmindeg_di}, which also implies the result of Turner \cite{turner2005} on undirected wheels. 

Let $\vec C_t$ denote the directed cycle on $t$ vertices, and recall that $\vec C_t^+$ is obtained from $\vec C_t$ by adding an apex source.
Define $\vec W_t^1$ to be the digraph obtained from $\vec C_t^+$ by adding an additional arc from one vertex on the cycle to the apex source.
Let us also define $\vec W_t^2$ to be the digraph obtained from $\vec C_t^+$ by reversing the orientation of one arc from apex source to the cycle.

We show that $\vec C_t^+$ and $\vec W_t^2$ satisfy Question \ref{q:subdivmindeg_di} (and hence Question \ref{q:subdivchro_di}). 
Observe that $\vec C_t^+$ and $\vec W_t^2$ are two orientations of the undirected wheel on $t+1$ vertices, and that both $\vec W_t^1$ and $\vec W_{t+1}^2$ contain subdivisions of both $\vec C_t^+$ and $\vec W_t^2$.

\begin{restatable}{theorem}{subdivwheel}
    \label{thm:subdiv_wheel_mid_deg_intro}
    Let $t\geq 2$ be an integer and let $\vec G$ be a digraph with minimum out-degree at least $t$.
    Then $\vec G$ contains either a subdivision of $\vec W_t^1$ or a subdivision of $\vec W_{t+1}^2$.
    In particular, $\vec G$ contains both a subdivision of $\vec C_t^+$ and a subdivision of $\vec W_t^2$.
\end{restatable}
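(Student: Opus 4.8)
The plan is to prove the stated dichotomy directly and then read off the ``in particular'' clause for free. Indeed, the excerpt records that each of $\vec W_t^1$ and $\vec W_{t+1}^2$ contains subdivisions of both $\vec C_t^+$ and $\vec W_t^2$; since both $\vec C_t^+$ and $\vec W_t^2$ are in fact subgraphs of these two wheels (delete the extra cycle-to-apex arc of $\vec W_t^1$ to get $\vec C_t^+$, delete one apex-to-cycle arc to get $\vec W_t^2$, and route the missing branch vertex of $\vec C_t^+$ through the $(t{+}1)$-st cycle vertex of $\vec W_{t+1}^2$), a subdivision of either wheel automatically yields a subdivision of each of $\vec C_t^+$ and $\vec W_t^2$. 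So the only real content is the dichotomy, which I would obtain by adapting the proof of Theorem~\ref{thm:butterfly_deg}. As a harmless first reduction I would pass to a terminal strong component of $\vec G$: its vertices keep all their out-arcs, so it is strongly connected with minimum out-degree still at least $t$. Strong connectivity is convenient because it makes a directed path back to any fixed vertex available at no cost.

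The concrete target is a \emph{wheel subdivision}: a vertex $a$ (the apex), a directed cycle $C$ with $a \notin V(C)$, a collection of $t$ internally disjoint directed paths (the ``spokes'') from $a$ to distinct vertices of $C$, and one further directed path (the ``back-path'') from some vertex of $C$ to $a$ that is internally disjoint from the spokes and from $C$. If $C$ has length exactly $t$, the spokes reach all of its vertices, and the back-path starts at a spoke endpoint, this is a subdivision of $\vec W_t^1$; if $C$ has length $t+1$ and the single cycle vertex not reached by a spoke is precisely the origin of the back-path, this is a subdivision of $\vec W_{t+1}^2$. To build such a structure I would rerun the construction behind Theorem~\ref{thm:butterfly_deg} for the in-arborescence given by the directed path $p_1 \to \cdots \to p_t$: that construction already produces, from minimum out-degree at least $t$, an apex $a$ together with a subdivided directed spine on branch vertices $p_1,\dots,p_t$ and $t$ internally disjoint spokes from $a$ to these branch vertices. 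The modification is to not stop at the spine but to use the out-arcs at the spine's terminal branch vertex $p_t$ (which has out-degree at least $t$ in $\vec G$) that point back into the already-built structure in order to close the spine into the cycle $C$ and, via strong connectivity, to supply the back-path to $a$.

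The two outcomes then correspond to how this closing-and-returning step interacts with the existing fan. If the out-arcs of $p_t$ together with a disjoint back-path let us link the $t$ spoke endpoints into a single cycle of length $t$ and send one of them back to $a$, we land in the $\vec W_t^1$ case; if closing the cycle forces the use of one extra vertex $c_0$ that is not itself a spoke endpoint but can be made to carry the return arc to $a$, then $a$ reaches only $t$ of the $t+1$ cycle vertices and we land in the $\vec W_{t+1}^2$ case. I expect the main obstacle to be exactly the disjointness bookkeeping in this last step: guaranteeing, from nothing more than minimum out-degree $t$, that the $t$ spokes can be made internally disjoint while ending on a \emph{common} cycle, and that a return path to $a$ can be chosen disjoint from the whole fan. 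Strong connectivity hands us return paths cheaply, so the delicate point is disjointness together with the accounting that pins down which of the two wheels appears (the apex spending exactly $t$ of its out-arcs on spokes in both cases is what makes the minimum out-degree bound tight). I would handle this by carrying the extremal/greedy selection of the spokes from the Theorem~\ref{thm:butterfly_deg} argument and choosing the closing arc of $p_t$ to minimize interference, so that a failure to close the cycle within the $t$ spoke endpoints is precisely what produces the extra $(t{+}1)$-st cycle vertex of $\vec W_{t+1}^2$.
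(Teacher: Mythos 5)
Your reduction to a terminal strong component and your reading of the ``in particular'' clause are both fine, and your description of the target structure (apex, rim cycle, $t$ internally disjoint spokes, one back-path) is accurate. But the core of the argument is missing: the step where you ``close the spine into the cycle $C$ and, via strong connectivity, supply the back-path to $a$'' is exactly the hard part, and the proposal offers only the intention to ``minimize interference'' rather than an argument. Concretely, after running the Theorem~\ref{thm:butterfly_deg} machinery on the directed path $p_1\to\cdots\to p_t$, all $t$ out-arcs of $p_t$ may land on internal vertices of the spokes or on the apex itself, so there need not be any way to close the spine disjointly; even if some out-arc of $p_t$ returns to an intermediate spine vertex $p_i$, the resulting cycle $p_i\to\cdots\to p_t\to p_i$ contains only the spoke endpoints $p_i,\dots,p_t$ and you have lost $i-1$ spokes; and strong connectivity yields a walk from the cycle back to $a$ with no disjointness from the fan whatsoever. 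None of these failure modes is addressed, so the proof does not go through as written.

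The paper resolves precisely this difficulty by inverting the order of construction. It starts from a \emph{shortest} directed cycle $C$ in the terminal strong component (shortness guarantees an arc $(x,y)$ with $x\in V(C)$ and $y\notin V(C)$, which is the future back-arc into the future apex $y$), and then grows a set $S\supseteq V(C)$ while maintaining two invariants: a system of $|\partial^-(S)|$ disjoint directed paths from the in-boundary of $S$ to the cycle, and an arc from the cycle to a vertex outside $S$. Menger's theorem gives $|\partial^-(S)|$ paths from $y$ back to the cycle; if $|\partial^-(S)|\ge t$ these are the spokes and the proof ends (the choice between $\vec W_t^1$ and $\vec W_{t+1}^2$ is decided by whether $x$ is itself a spoke endpoint), and otherwise the out-degree of $y$ supplies a fresh arc $(y,y')$ and the cycle is rerouted through $y$, strictly shrinking $|V(\vec G)\setminus S|$. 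The apex is thus identified \emph{last}, and the back-path is always a single pre-existing arc, which is what makes the disjointness bookkeeping tractable. To repair your proposal you would need either to adopt this cycle-first organization or to supply a genuinely new argument for the closing step; as it stands there is a gap.
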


We conclude this section with an application of Theorem \ref{thm:subdiv_wheel_mid_deg_intro} to orientations of $K_4$.
Recall Mader's result \cite{mader1996topological} that every digraph with minimum out-degree at least 3 contains a subdivision of the transitive tournament on 4 vertices.
Based on Mader's result, it was further shown in \cite{gishboliner2022dichromatic} that every digraph with dichromatic number at least 4 contains every orientation of $K_4$ by verifying non-transitive orientations of $K_4$.

Observe that there are (up to isomorphism) two orientations of $K_4$ that do not contain a \emph{sink} (a vertex with out-degree 0), and that these two orientations are isomorphic to either $\vec C_3^+$ or $\vec W_3^2$.

\begin{corollary}\label{cor:outdegK4}
    Every digraph with minimum out-degree at least $3$ contains a subdivision of every orientation of $K_4$ with no sink.
\end{corollary}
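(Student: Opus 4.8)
The plan is to derive the statement as a direct consequence of Theorem~\ref{thm:subdiv_wheel_mid_deg_intro} once we classify the sink-free orientations of $K_4$. An orientation of $K_4$ is precisely a tournament on four vertices, so I would first recall that, up to isomorphism, there are exactly four such tournaments, and that they are distinguished by their out-degree (score) sequences. Using Landau's conditions, the only non-decreasing sequences in $\{0,1,2,3\}$ summing to $\binom{4}{2}=6$ that are realizable are $(0,1,2,3)$, $(0,2,2,2)$, $(1,1,1,3)$, and $(1,1,2,2)$; since there are exactly four tournaments and four realizable score sequences, each sequence determines a unique tournament. A sink is a vertex of out-degree $0$, so a tournament has a sink exactly when its score sequence contains a $0$. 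Hence the sink-free orientations of $K_4$ are precisely the two with score sequences $(1,1,1,3)$ and $(1,1,2,2)$.

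Next I would identify these two tournaments with $\vec C_3^+$ and $\vec W_3^2$, confirming the observation stated just before the corollary. For $(1,1,1,3)$, the unique vertex of out-degree $3$ dominates the other three, which each have out-degree $1$ and therefore form a directed triangle among themselves; this is exactly $\vec C_3^+$, with the dominating vertex as the apex source, and it is not strongly connected since the apex is a source. For $(1,1,2,2)$, I would simply compute the score sequence of $\vec W_3^2$: starting from $\vec C_3^+$ (scores $(1,1,1,3)$) and reversing one apex arc turns the apex's out-degree to $2$ and raises one cycle vertex's out-degree to $2$, yielding sorted sequence $(1,1,2,2)$. Exhibiting a Hamiltonian directed cycle through all four vertices of $\vec W_3^2$ shows it is strongly connected, distinguishing it from $\vec C_3^+$; since the score sequence $(1,1,2,2)$ pins down a unique tournament, $\vec W_3^2$ must be that one.

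Finally I would apply Theorem~\ref{thm:subdiv_wheel_mid_deg_intro} with $t=3$. Since $\vec G$ has minimum out-degree at least $3$, the theorem guarantees that $\vec G$ contains both a subdivision of $\vec C_3^+$ and a subdivision of $\vec W_3^2$. By the classification above, every orientation of $K_4$ with no sink is isomorphic to one of these two digraphs, so $\vec G$ contains a subdivision of each of them, as required.

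The corollary carries essentially no difficulty beyond invoking Theorem~\ref{thm:subdiv_wheel_mid_deg_intro}: all the content is the finite combinatorial classification. The only subtle point to get right is ensuring that $(1,1,2,2)$ corresponds to $\vec W_3^2$ and not to $\vec C_3^+$; this is the step I expect to need the most care, but it is settled cleanly by the strong-connectivity check, since the $(1,1,1,3)$ tournament has a source and is therefore not strongly connected.
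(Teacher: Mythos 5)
Your proposal is correct and matches the paper's argument: the paper likewise observes that the two sink-free orientations of $K_4$ are exactly $\vec C_3^+$ and $\vec W_3^2$ and then invokes Theorem \ref{thm:subdiv_wheel_mid_deg_intro} with $t=3$. Your score-sequence classification and the strong-connectivity check just supply the details of that observation, which the paper leaves implicit.
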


Reversing the directions of all arcs in Corollary \ref{cor:outdegK4}, we obtain that every digraph with minimum in-degree at least 3 contains a subdivision of every orientation of $K_4$ with no source.
Since every digraph with dichromatic number at least $t$ contains a subdigraph with minimum out-degree and minimum in-degree at least $t-1$, and since the only orientation of $K_4$ with both a source and a sink is the transitive tournament, Corollary \ref{cor:outdegK4}, together with Mader's result \cite{mader1996topological}, gives the following strengthening of a main result in \cite{gishboliner2022dichromatic}, in fact with a much shorter proof.

\begin{corollary}\label{cor:outindegK4}
    Every digraph with minimum out-degree and in-degree at least 3 contains a subdivision of every orientation of $K_4$.
\end{corollary}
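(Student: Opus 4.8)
The plan is to reduce the statement to the three results already in hand: Mader's theorem \cite{mader1996topological} that minimum out-degree at least $3$ forces a subdivision of the transitive tournament on four vertices, Corollary \ref{cor:outdegK4}, and the in-degree reversal of Corollary \ref{cor:outdegK4} recorded immediately above (minimum in-degree at least $3$ forces a subdivision of every orientation of $K_4$ with no source). The only genuinely new ingredient is a trivial classification of the orientations of $K_4$ according to the presence of a source or a sink, which lets me assign each orientation to exactly one of these three tools.

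First I would fix an arbitrary orientation $\vec H$ of $K_4$ and a digraph $\vec G$ of minimum out-degree and in-degree at least $3$, and split into cases by whether $\vec H$ contains a source and whether it contains a sink. If $\vec H$ has no sink, then, since $\vec G$ has minimum out-degree at least $3$, Corollary \ref{cor:outdegK4} directly produces a subdivision of $\vec H$. Symmetrically, if $\vec H$ has no source, then, since $\vec G$ has minimum in-degree at least $3$, the reversal of Corollary \ref{cor:outdegK4} produces a subdivision of $\vec H$. The only remaining possibility is that $\vec H$ has both a source and a sink.

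To dispose of that last case I would verify the elementary claim that the only orientation of $K_4$ possessing both a source and a sink is the transitive tournament. Indeed, a source $s$ dominates the other three vertices and a sink $t \ne s$ is dominated by the other three, so the single edge between the two remaining vertices $u,v$ is oriented in one of two ways; in either case one checks directly that the out-degree sequence is $(0,1,2,3)$, so $\vec H$ is transitive. Mader's result, applied via the minimum out-degree at least $3$ hypothesis, then supplies a subdivision of $\vec H$. Since every orientation of $K_4$ has no sink, has no source, or has both, these three cases are exhaustive, which completes the argument.

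The proof is short precisely because all the real difficulty has already been absorbed into Theorem \ref{thm:subdiv_wheel_mid_deg_intro} and Corollary \ref{cor:outdegK4}; here the only things to get right are the case split and the reversal symmetry. The one point that will require a moment of care is confirming that reversing all arcs of $\vec G$ carries subdivisions to subdivisions and carries an orientation with no sink to one with no source, so that the in-degree hypothesis covers \emph{exactly} the source-free orientations not already handled by the out-degree argument. I would check that this arc-reversal involution matches the no-source orientations bijectively with the no-sink ones before invoking the reversed corollary, after which no orientation of $K_4$ is left uncovered.
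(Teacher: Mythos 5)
Your proposal is correct and follows essentially the same route as the paper: the paper likewise splits orientations of $K_4$ into those without a sink (handled by Corollary \ref{cor:outdegK4} via the out-degree hypothesis), those without a source (handled by the arc-reversed form via the in-degree hypothesis), and the transitive tournament as the unique orientation with both a source and a sink (handled by Mader's result). Your explicit verification of the out-degree sequence $(0,1,2,3)$ is a fine way to justify the classification step the paper states without proof.
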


\subsection{Paper outline}
In Section \ref{sec:contractibleorderings}, we prove our results on undirected graphs. 
We first outline our proof and define contractible orderings, then in Section \ref{sec:proofcontractibleordering} we prove Theorem \ref{thm:mainmindeg}.
In Section \ref{sec:COexamples}, we give several examples of contractibly orderable graphs. 
In Section \ref{sec:digraphs}, we prove our results on digraphs, beginning with the proof of Theorem \ref{thm:butterfly_deg} in Section \ref{sec:apexarborescence}, then proceeding to the proof of Theorem \ref{thm:subdiv_wheel_mid_deg_intro} in Section \ref{sec:directedwheel}.
Section \ref{sec:conclusion} contains some concluding remarks and conjectures.

\section{Contractible orderings} \label{sec:contractibleorderings}

Let $H$ be a graph.
For an edge $e=uv\in E(H)$, we write $H/e$ to denote the graph obtained from $H$ by contracting $e$ and deleting resulting loops and parallel edges; in $H/e$, both $u$ and $v$ refer to the same contracted vertex.
If $x$ is either a vertex or an edge of $H$, then we write $H\setminus x$ to denote the graph obtained from $H$ by deleting $x$.
An \emph{ordering} of $H$ is a sequence $\omega=(\omega_1,\dots,\omega_t)$ such that $V(H)=\{\omega_1,\dots,\omega_t\}$ and $|V(H)|=t$.
Given a subset $S\subseteq [t]$, we denote by $\omega_S$ the set $\{\omega_s:s\in S\}$.

Theorem \ref{thm:mainmindeg} involves particular kinds of orderings, which we call contractible orderings.
Before giving its precise definition, let us first describe the intuition by outlining our proof. 

Suppose we wish to construct an $H^+$-minor in a graph $G$ of minimum degree $t=|V(H)|$. We choose an ordering $\omega=(\omega_1,\dots,\omega_t)$ of $H$ and arbitrarily choose an initial singleton $S\subseteq V(G)$. Note that $G[S]\cong H[\{\omega_1\}]=H[\omega_{\{1\}}]$. 
We iteratively update $G$ and $S$ by contracting edges in $G$ or by augmenting $S$ in $G$, while maintaining the following properties throughout: $G[S]$ has a spanning subgraph isomorphic to $H[\omega_{[|S|]}]$, $G\setminus S$ is nonempty, and every vertex in $G\setminus S$ has degree at least $t$. 
We apply further minor operations in $G$, if necessary, so that no vertex in $G\setminus S$ is separated from $S$ by a vertex cut of size at most $|S|$. This ensures that every vertex in $S$ has at least 2 neighbors in $G\setminus S$ and that, for every vertex $v$ in $G\setminus S$, there are $|S|$ paths from $v$ to $S$ that pairwise intersect only at $v$. The latter implies that every vertex in $G\setminus S$ can serve as the apex to form an $H[\omega_{[|S|]}]^+$-minor in $G$.

The main task is to find a suitable ordering $\omega=(\omega_1,\dots,\omega_t)$ of $H$ so that we can augment $S$ and the $H[\omega_{[|S|]}]$-minor in $G[S]$ in accordance with the ordering $\omega$ until we obtain $H=H[\omega_{[t]}]$ in $G[S]$, at which point we can use any vertex in $G\setminus S$ as the apex to obtain an $H^+$-minor in $G$.

Let us first consider the case that $H$ is a tree, which is straightforward. Since trees are 1-degenerate, $H$ admits an ordering $\omega=(\omega_1,\dots,\omega_t)$ such that for all $s\in[t-1]$, $\omega_{s+1}$ is adjacent to exactly one vertex in $\omega_{[s]}$.
Hence, if $|S|<t$, then there is a vertex $v\in V(G)\setminus S$ that is adjacent to the vertex of $S$ corresponding to $\omega_i$, where $\omega_i$ is the unique neighbor of $\omega_{|S|+1}$ in $H[\omega_{[|S|+1]}]$. Then $G[S\cup\{v\}]$ contains a spanning subgraph isomorphic to $H[\omega_{[|S|+1]}]$. In this manner, we can always augment $S$ by adding $v$.

Now suppose that $H$ is a maximal outerplanar graph. Since outerplanar graphs are 2-degenerate and $H$ is maximally outerplanar, $H$ admits an ordering $\omega=(\omega_1,\dots,\omega_t)$ such that for all $s\in[t-1]$ with $s\geq 2$, $\omega_{s+1}$ is adjacent to exactly two vertices in $\omega_{[s]}$, which are themselves adjacent in $H$.
If $|S|<t$ and if there is a vertex $v\in V(G)\setminus S$ that is adjacent to the two vertices of $S$ corresponding to $\omega_i$ and $\omega_j$, where $\omega_i,\omega_j$ are the two neighbors of $\omega_{|S|+1}$ in $H[\omega_{[|S|+1]}]$, then $G[S\cup\{v\}]$ contains a spanning subgraph isomorphic to $H[\omega_{[|S|+1]}]$ and we can augment $S$ by adding $v$ as before.

If such a vertex $v\in V(G)\setminus S$ does not exist, then we cannot augment $S$ at this stage; instead, we contract the edge in $G[S]$ corresponding to the edge $\omega_i\omega_j$ in $H[\omega_{[|S|]}]$ to obtain $G'$ and $S'$.
Since $|S'|=|S|-1$, this shrinks the set $S$, but it also shrinks the graph $G$, which is progress. Also note that the degree of every vertex in $G'\setminus S'$ is equal to that in $G\setminus S$ since we assumed that no vertex in $G\setminus S$ is adjacent to both endpoints of the contracted edge.
However, $G'[S']$ may no longer have a spanning subgraph isomorphic to $H[\omega_{[|S'|]}]$. Indeed, $H[\omega_{[|S|]}]/\omega_i\omega_j$ may not even be a subgraph of $H$; in this case, our proof fails. Even if $H[\omega_{[|S|]}]/\omega_i\omega_j$ is a subgraph of $H$, it may not be isomorphic to $H[\omega_{[|S|-1]}]$; but in this case, there may be a different ordering $\omega'$ of $H$ such that $H[\omega_{[|S|]}]/\omega_i\omega_j$ is isomorphic to $H[\omega'_{[|S|-1]}]$, and we can continue by trying to augment $S'$ in accordance with the new ordering $\omega'$. Our definition of contractible orderings encodes exactly this property, that whenever we may need to contract an edge $\omega_i\omega_j$, there is always a (possibly different) ordering $\omega'$ of $H$ such that $H[\omega_{[|S|]}]/\omega_i\omega_j\cong H[\omega'_{[|S|-1]}]$.

Let us now be precise. A \emph{recursive ordering} of a graph $H$ on $t$ vertices is an ordering $\omega=(\omega_1,\dots,\omega_t)$ of $H$ such that
    \begin{itemize}
        \item for all $s\in[t-1]$, we have that $\omega_{s+1}$ is adjacent to at most two vertices in $\omega_{[s]}$, and if $\omega_{s+1}$ is adjacent to exactly two vertices in $\omega_{[s]}$, then these two vertices are adjacent in $H$.
    \end{itemize}
A \emph{contractible ordering} of $H$ is a nonempty set $\Omega$ of recursive orderings of $H$ such that 
    \begin{itemize}
        \item for every $\omega\in\Omega$ and $s\in[t-1]$, if $\omega_{s+1}$ is adjacent to two vertices in $\omega_{[s]}$, say $\omega_i$ and $\omega_j$, then there exists $\omega'\in\Omega$ such that $H[\omega_{[s]}]/\omega_i\omega_j$ is isomorphic to $H[\omega'_{[s-1]}]$.
    \end{itemize}
If $H$ admits a contractible ordering, then we say that $H$ is \emph{contractibly orderable}.

\subsection{Proof of Theorem \ref{thm:mainmindeg}} \label{sec:proofcontractibleordering}

Let $G$ be a graph.
A \emph{separation} of $G$ is an ordered pair $(A,B)$ of subsets of $V(G)$ with $A\cup B = V(G)$ such that there does not exist an edge in $G$ with one end in $A\setminus B$ and the other end in $B\setminus A$; $(A,B)$ is \emph{nontrivial} if $A\setminus B$ and $B\setminus A$ are both nonempty, and its \emph{order} is defined to be $|A\cap B|$.
We say that a subset $S$ of $V(G)$ is \emph{well-connected in $G$} if $S\neq V(G)$ and there does not exist a nontrivial separation $(A,B)$ of $G$ with $S\subseteq A$ and $|A\cap B|<|S|$. 
Note that if $S$ is well-connected in $G$, then for all $v\in V(G)\setminus S$, there exist $|S|$ paths $P_1,\dots,P_{|S|}$ from $v$ to $S$ only sharing $v$, by Menger's theorem.
Also note that $\emptyset$ is well-connected in any nonempty graph.

We are ready to prove Theorem \ref{thm:mainmindeg}, that if $H$ is a graph on $t$ vertices that is contractibly orderable, then every graph with minimum degree $t$ contains an $H^+$-minor. For inductive purposes we first prove the following more technical statement.
\begin{lemma} \label{lem:contractableorder}
Let $t$ be a positive integer and let $H$ be a graph on $t$ vertices admitting a contractible ordering $\Omega$. 
If $G$ is a graph and $S \subseteq V(G)$ is well-connected in $G$ such that every vertex in $G\setminus S$ has degree at least $t$ in $G$ and $G[S]$ contains a spanning subgraph isomorphic to $H[\omega_{[|S|]}]$ for some $\omega\in\Omega$, then $G$ contains an $H^+$-minor such that the branch set of the apex is a singleton in $V(G)\setminus S$. 
\end{lemma}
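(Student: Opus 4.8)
The plan is to induct on $(|V(G)|, t-|S|)$ ordered lexicographically; equivalently, I would take a counterexample minimizing $|V(G)|$ and then $t-|S|$. Throughout write $s=|S|$ and fix $\omega\in\Omega$ together with an isomorphism exhibiting $H[\omega_{[s]}]$ as a spanning subgraph of $G[S]$, so that the vertices of $S$ serve as singleton branch sets for $\omega_1,\dots,\omega_s$. The base case is $s=t$: then $G[S]$ contains a spanning copy of $H$, so $S$ supplies singleton branch sets for all of $V(H)$. Since $S$ is well-connected, $S\neq V(G)$, so I may pick $v\in V(G)\setminus S$; the excerpt's consequence of Menger's theorem gives $t$ paths from $v$ to $S$ meeting only at $v$, hence ending at the $t$ distinct vertices of $S$ and internally disjoint from $S$. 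I would absorb the interior of each such path into the branch set of its endpoint, which keeps these branch sets connected and disjoint and preserves every edge of $H$ already realized inside $S$. Taking $v$ itself as the singleton branch set of the apex then produces the desired $H^+$-minor, because $v$ is adjacent to the first vertex of each path.

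For the inductive step $s<t$ I first dispose of the case where $S$ can be cut cheaply. Suppose there is a nontrivial separation $(A,B)$ with $S\subseteq A$ of order at most $s$; well-connectedness forces the order to be exactly $s$, and I set $C=A\cap B$. Because $S$ is well-connected, no fewer than $s$ vertices separate $S$ from $C$ inside $G[A]$, so Menger yields $s$ vertex-disjoint $S$--$C$ paths in $G[A]$, matching each vertex of $S$ with a distinct vertex of $C$. Contracting each such path onto its endpoint in $C$ and deleting the rest of $A\setminus B$ produces a minor $G^*$ of $G$ with $V(G^*)\setminus C=B\setminus A$. I would verify that $G^*$ satisfies the hypotheses with $S^*=C$: the vertices of $B\setminus A$ retain all their neighbours (which lie in $B$) and so still have degree at least $t$; the contracted paths make $G^*[C]$ contain a spanning copy of $H[\omega_{[s]}]$, since the $H$-edges among the vertices of $S$ survive the contractions; and any small separation of $G^*$ trapping $C$ would lift to one trapping $S$ in $G$, contradicting well-connectedness. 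As $|V(G^*)|<|V(G)|$, induction gives an $H^+$-minor of $G^*$ with singleton apex outside $C$, which lifts to $G$ (the apex vertex lies in the untouched set $B\setminus A$).

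I may therefore assume no such separation exists, i.e.\ $S$ cannot be separated from $V(G)\setminus S$ by $s$ vertices; note also $|V(G)|\geq s+2$, since any outside vertex has degree at least $t>s$. Under this assumption augmentation becomes automatic: for any $u\in V(G)\setminus S$ with $S\cup\{u\}\neq V(G)$, the set $S\cup\{u\}$ is again well-connected, as a violating separation would trap $S$ by at most $s$ vertices. It remains to add a vertex realizing $\omega_{s+1}$, whose neighbourhood $N$ in $\omega_{[s]}$ has size at most $2$. If $|N|\leq 1$, I add a vertex $u\in V(G)\setminus S$ adjacent to the (at most one) required branch vertex---such $u$ exists because every vertex of $S$ has an outside neighbour, namely the penultimate vertex of a fan path ending at it---and recurse with $s+1$. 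If $N=\{\omega_i,\omega_j\}$ with branch vertices $a_i,a_j$ and some outside vertex is a common neighbour of $a_i$ and $a_j$, I add such a vertex and recurse. Finally, if $a_i,a_j$ have \emph{no} common neighbour outside $S$, I instead contract the edge $a_ia_j$; crucially, the absence of an outside common neighbour means the contraction lowers no outside degree, so the degree hypothesis survives, and by the defining property of a contractible ordering the contracted set realizes $H[\omega'_{[s-1]}]$ for some $\omega'\in\Omega$, so induction applies to the smaller graph and the resulting minor lifts (the branch set $\{a_i,a_j\}$ is connected). In every case the measure strictly decreases.

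I expect the main obstacle to be the joint preservation of well-connectedness and the degree hypothesis across the two reductions, rather than the combinatorial skeleton. The two-neighbour contraction would in general destroy the degree condition, and the essential device for avoiding this is the dichotomy above: contract only when $a_i,a_j$ have no outside common neighbour, which is exactly when contraction preserves all outside degrees. The boundary-shift reduction is the other technical heart, since one must simultaneously relocate the realized prefix onto the separator $C$, keep the degree condition on the far side, and preserve well-connectedness; all three hinge on the Menger matching between $S$ and $C$ and on careful lifting of separations. Verifying these invariants is where I anticipate the bulk of the effort.
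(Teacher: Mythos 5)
Your proposal is correct and follows essentially the same route as the paper's proof: a minimal-counterexample induction with the same three reductions (rerouting $S$ onto a small separator via a Menger linkage, contracting the edge $a_ia_j$ precisely when no outside vertex is adjacent to both ends --- which is exactly the paper's key claim, since that is exactly when outside degrees are preserved --- and otherwise augmenting $S$ by one vertex), differing only in the cosmetic choice of induction measure. The one step you leave implicit, that well-connectedness survives the edge contraction, is the routine lifting of separations you already anticipate and poses no difficulty.
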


\begin{proof}
Suppose to the contrary that there exists a counterexample formed by $G$ and $S$.
That is, $S$ is well-connected in $G$, every vertex in $G\setminus S$ has degree at least $t$ in $G$, $G[S]$ contains a spanning subgraph isomorphic to $H[\omega_{[|S|]}]$ for some $\omega\in\Omega$, and $G$ does not contain an $H^+$-minor such that the branch set of the apex (the vertex in $V(H^+)\setminus V(H)$) is a singleton $V(G)\setminus S$.

We choose $G$ and $S$ as above such that $|V(G)\setminus S|$ is minimum and, subject to this, $|S|$ is minimum. 
Let $s=|S|$.
By assumption, there exists $\omega=(\omega_1,\dots,\omega_s)\in \Omega$ such that $G[S]$ contains a spanning subgraph isomorphic to $H[\omega_{[s]}]$. Fix such an $\omega$ and an isomorphism $\phi$ from $H[\omega_{[s]}]$ to such a spanning subgraph of $G[S]$.

We first assume $s=t$.
Then $\omega_{[s]} = V(H)$ and $H[\omega_{[s]}] = H$. 
Since $S$ is well-connected in $G$, we have $V(G)\setminus S \neq\emptyset$ and, for all $v\in V(G)\setminus S$, there exist $|S|=t$ paths $P_1,\dots,P_t$ in $G$ from $v$ to $S$ only sharing $v$. 
Assuming without loss of generality that $P_i$ contains $\phi(\omega_i)$, we obtain an $H^+$-minor in $G$ such that the branch set of the apex is a singleton in $V(G)\setminus S$; namely, $\mu(a)=\{v\}$, where $a$ is the apex, and for each $i\in[t]$, $\mu(\omega_i) = V(P_i)\setminus \{v\}$. 
This contradicts our choice of $G$ and $S$.

So we may assume $s<t$.  

\begin{claim}\label{lem:contractibleorder:claim1}
There does not exist a nontrivial separation $(A,B)$ in $G$ with $S\subseteq A$ and $|A \cap B| \leq s$.
\end{claim}

\begin{subproof}[Proof of Claim \ref{lem:contractibleorder:claim1}]
Suppose to the contrary that such a separation $(A,B)$ exists. 
Since $S$ is well-connected in $G$, there does not exist a nontrivial separation $(A,B)$ in $G$ with $S\subseteq A$ and $|A \cap B|<s$, so we have $|A \cap B|=s$.
Since $S$ is well-connected in $G$, there exist $s$ disjoint paths $P_1,\dots,P_s$ in $G$ from $S$ to $A\cap B$. Let us assume without loss of generality that $\phi(\omega_i)\in V(P_i)$ for $i\in[s]$. 
Let $G'$ be the graph obtained from $G$ by contracting $P_i$ into a single vertex $v_i$ for each $i\in[s]$ and deleting every vertex in $A\setminus (\bigcup_{i\in[s]} V(P_i))$. Let $S'=\{v_1,\dots,v_s\}$. 
Then $S'$ is well-connected in $G'$ and each vertex in $G'\setminus S'$ has degree in $G'$ equal to that in $G$, which is at least $t$.
Moreover, $G'[S']$ contains a spanning subgraph isomorphic to $H[\omega_{[s]}]$, witnessed by the map $\omega_i \mapsto v_i$ for $i\in[s]$. 
On the other hand, since $(A,B)$ is a nontrivial separation, we have $S\subsetneq A$, so $|V(G')\setminus S'|<|V(G)\setminus S|$.
By minimality of $|V(G)\setminus S|$, $G'$ contains an $H^+$-minor $\mu'$ such that the branch set $\mu'(a)$ of the apex $a$ is a singleton in $V(G')\setminus S'$.
Now let $\mu(a)=\mu'(a)$ and, for each $i\in[s]$, let $\mu(\omega_i) = (\mu'(\omega_i)\setminus\{v_i\}) \cup V(P_i)$.
Then $\mu$ is an $H^+$-minor in $G$ such that the branch set of the apex is a singleton in $V(G)\setminus S$, a contradiction.
\end{subproof}

Since $\omega$ is a recursive ordering of $H$, $\omega_{s+1}$ is adjacent to at most two vertices in $\omega_{[s]}$ in $H$, and if $\omega_{s+1}$ is adjacent to exactly two vertices in $\omega_{[s]}$, then these two vertices are adjacent in $H$.

\begin{claim}\label{lem:contractibleorder:claim2}
If $\omega_{s+1}$ is adjacent to exactly two vertices in $\omega_{[s]}$, say $\omega_i$ and $\omega_j$, then there exists a vertex in $V(G)\setminus S$ adjacent to both $\phi(\omega_i)$ and $\phi(\omega_j)$.
\end{claim}

\begin{subproof}[Proof of Claim \ref{lem:contractibleorder:claim2}]
Suppose to the contrary that no vertex in $V(G)\setminus S$ is adjacent to both $\phi(\omega_i)$ and $\phi(\omega_j)$.
Since $\omega$ is a recursive ordering of $H$, we have $\omega_i\omega_j\in E(H)$ and, hence, $\phi(\omega_i)\phi(\omega_j)\in E(G)$.
Moreover, since $\Omega$ is a contractible ordering of $H$, there exists $\omega'\in \Omega$ such that $H[\omega_{[s]}]/\omega_i\omega_j$ is isomorphic to $H[\omega'_{[s-1]}]$. 

Let $G'$ be the graph obtained from $G$ by contracting the edge $\phi(\omega_i)\phi(\omega_j)$ into a vertex $v$.
Let $S' = (S\setminus \{\phi(\omega_i),\phi(\omega_j)\})\cup\{v\}$.
Since $S$ is well-connected in $G$, $S'$ is well-connected in $G'$. 
For every $z \in V(G) \setminus S = V(G')\setminus S'$, since $z$ has degree at least $t$ in $G$ and is not adjacent to both $\phi(\omega_i)$ and $\phi(\omega_j)$, $z$ still has degree at least $t$ in $G'$.
Moreover, since $G'[S'] = G[S]/\phi(\omega_i)\phi(\omega_j)$ and since $H[\omega_{[s]}]/\omega_i\omega_j$ is isomorphic to $H[\omega'_{[s-1]}]$, we have that $G'[S']$ contains a spanning subgraph isomorphic to $H[\omega'_{[s-1]}]$.
On the other hand, we have $|V(G)\setminus S|=|V(G')\setminus S'|$ and $|S'|<|S|$. It follows from our choice of $G$ and $S$ that $G'$ has an $H^+$-minor such that the branch set of the apex is a singleton in $V(G')\setminus S'$. Uncontracting the edge $\phi(\omega_i)\phi(\omega_j)$ gives an $H^+$-minor in $G$ such that the branch set of the apex is a singleton in $V(G)\setminus S$, a contradiction.
\end{subproof}

We now add a vertex $v \in V(G) \setminus S$ to $S$, where $v$ is chosen as follows:
    \begin{description}
        \item[Case 1:] If $\omega_{s+1}$ is not adjacent to any vertex in $\omega_{[s]}$, then let $v$ be an arbitrary vertex in $V(G)\setminus S$.
        \item[Case 2:] If $\omega_{s+1}$ is adjacent to exactly one vertex in $\omega_{[s]}$, say $\omega_i$, then let $v$ be a vertex in $V(G)\setminus S$ adjacent to $\phi(\omega_i)$. 
        
        (Note that such a vertex $v$ exists, for otherwise $(S, V(G) \setminus \{\phi(\omega_i)\})$ is a separation of $G$ with $|S \cap V(G) \setminus \{\phi(\omega_i)\}| \leq s-1$, contradicting the well-connectedness of $S$.) 
        \item[Case 3:] If $\omega_{s+1}$ is adjacent to exactly two vertices in $\omega_{[s]}$, say $\omega_i$ and $\omega_j$, then let $v$ be a vertex in $V(G)\setminus S$ adjacent to both $\phi(\omega_i)$ and $\phi(\omega_j)$. 
        
        (Note that the existence of $v$ follows from Claim \ref{lem:contractibleorder:claim2}.)
    \end{description}
In all cases, define $S'=S\cup\{v\}$ and note that $G[S']$ contains a spanning subgraph isomorphic to $H[\omega_{[s+1]}]$, witnessed by the map $\omega_i\mapsto \phi(\omega_i)$ for $i\in[s]$ and $\omega_{s+1}\mapsto v$. Moreover, $S'$ is well-connected in $G$ by Claim \ref{lem:contractibleorder:claim1}. But since $|V(G)\setminus S'| < |V(G)\setminus S|$, we have by minimality of $|V(G)\setminus S|$ that $G$ contains an $H^+$-minor such that the branch set of the apex is a singleton in $V(G)\setminus S' \subseteq V(G)\setminus S$, a contradiction.
\end{proof}

Theorem \ref{thm:mainmindeg}, restated below, now follows immediately from Lemma \ref{lem:contractableorder}.
\mainmindeg*
\begin{proof}
Let $t=|V(H)|$.
By assumption, there exists a graph $H_1$ on $t$ vertices admitting a contractible ordering $\Omega$ such that $H$ is a spanning subgraph of $H_1$.
Note that $\emptyset$ is well-connected in $G$, every vertex in $G \setminus \emptyset$ has degree at least $t$, and $G[\emptyset]$ is isomorphic to $H[\omega_{[|\emptyset|]}]=H[\omega_\emptyset]$ for any $\omega \in \Omega$, since these are both the empty graph.
By Lemma \ref{lem:contractableorder}, there exists an $H_1^+$-minor in $G$ such that the branch set of the apex is a singleton.
Since $H$ is a subgraph of $H_1$, this gives an $H^+$-minor in $G$ such that the branch set of the apex is a singleton.
\end{proof}

\subsection{Examples of contractibly orderable graphs} \label{sec:COexamples}
In this section we show that several classes of outerplanar graphs are contractibly orderable. 
We first define a stronger notion of \emph{rooted} contractible orderings and prove in Subsection \ref{sec:universal} that \emph{universal} outerplanar graphs (maximal outerplanar graphs whose weak dual tree is a complete cubic tree) are contractibly orderable. 
The case where the weak dual tree is a complete binary tree is similar.
In Subsection \ref{sec:moreCOexamples}, we describe a general method to construct larger contractibly orderable graphs using rooted contractible orderings, and prove that cacti and maximal outerplanar graphs whose weak dual tree is a path are contractibly orderable.

We begin by defining rooted contractible orderings.
Let $H$ be a graph, and let $\rho=(\rho_1,\dots,\rho_m)$ be a sequence of vertices of $H$ for some integer $m$ with $0 \leq m \leq 2$.
Let $t=|V(H)|$.
We say that an ordering $\omega=(\omega_1,\dots,\omega_t)$ of $H$ is \emph{$\rho$-rooted} if $(\omega_1,...,\omega_m)=\rho$. 
A \emph{$\rho$-rooted contractible ordering} of $H$ is a nonempty set $\Omega$ of recursive orderings of $H$ such that 
    \begin{itemize}
        \item every $\omega\in\Omega$ is $\rho$-rooted, 
        
        \item for every $\omega\in\Omega$ and $s\in[t-1]$ with $s\geq 3$, if $\omega_{s+1}$ is adjacent to two vertices in $\omega_{[s]}$, say $\omega_i$ and $\omega_j$, then $\{\omega_i,\omega_j\}\neq\{\rho_1,\dots,\rho_m\}$ and there exists $\omega'\in\Omega$ and an isomorphism $\phi$ from $H[\omega'_{[s-1]}]$ to $H[\omega_{[s]}]/\omega_i\omega_j$ such that $\phi(\rho_k)=\rho_k$ for every $k\in[m]$.

        (Note that $\omega_i\omega_j\in E(H)$ because every $\omega\in\Omega$ is a recursive ordering. 
        Moreover, if $m=2$, then $\rho_1,\rho_2$ are distinct elements in $V(H[\omega_{[s-1]}'])$ and in $V(H[\omega_{[s]}]/\omega_i\omega_j)$ because $s\geq 3$ and $\{\omega_i,\omega_j\}\neq\{\rho_1,\dots,\rho_m\}$ respectively.)
    
    \end{itemize}
    Note that the second condition implies that
    if $m=2$ and $\rho_1\rho_2$ is an edge of $H$ belonging to a triangle of $H$, and if $v\in V(H)$ is adjacent to both $\rho_1$ and $\rho_2$, then $v=\omega_3$ for all $\omega=(\omega_1,\dots,\omega_t)\in\Omega$.
    Also note that every $\rho$-rooted contractible ordering $\Omega$ of $H$ is also a contractible ordering of $H$: Let $\omega\in\Omega$, $s\in[t-1]$, and suppose $\omega_{s+1}$ is adjacent to two vertices $\omega_i$ and $\omega_j$ in $\omega_{[s]}$. Note that this implies $s\geq 2$. If $s=2$, then $H[\omega_{[s]}]\cong K_2$, so $H[\omega_{[s]}]/\omega_i\omega_j \cong K_1\cong H[\omega_{[s-1]}]$. If $s\geq 3$, then there exists $\omega'\in\Omega$ and an isomorphism from $H[\omega'_{[s-1]}]$ to $H[\omega_{[s]}]/\omega_i\omega_j$ by the definition of a $\rho$-rooted contractible ordering.

To demonstrate a very simple example of a rooted contractible ordering, we show that every tree has such an ordering, rooted at any given vertex.
We remark that this result can also be deduced from a more general result proved later in this paper (Lemma \ref{lem:attachingroots}).

\begin{proposition} \label{prop:tree}
Let $H$ be a tree and let $v\in V(H)$. 
Then $H$ admits a $(v)$-rooted contractible ordering.
\end{proposition}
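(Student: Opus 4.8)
The plan is to exploit the fact that, for a tree, the only potentially difficult clause in the definition of a rooted contractible ordering never gets triggered. Recall from the discussion preceding the proposition that an ordering $\omega=(\omega_1,\dots,\omega_t)$ of a tree $H$ is recursive if and only if each $\omega_{s+1}$ is adjacent to \emph{exactly} one vertex of $\omega_{[s]}$, equivalently each $\omega_{[s]}$ induces a subtree of $H$. The reason is acyclicity: if some $\omega_{s+1}$ were adjacent to two vertices $\omega_i,\omega_j\in\omega_{[s]}$, then, since $\omega_{[s]}$ induces a connected subgraph, the $\omega_i$--$\omega_j$ path inside $\omega_{[s]}$ together with $\omega_{s+1}$ would form a cycle, contradicting that $H$ is a tree. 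So in any recursive ordering of a tree, no vertex ever has two earlier neighbors.

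First I would take $\Omega$ to be the set of all $(v)$-rooted recursive orderings of $H$, that is, all orderings $\omega$ with $\omega_1=v$ such that each $\omega_{[s]}$ induces a subtree of $H$. To see that $\Omega\neq\emptyset$, I would build one such ordering greedily: set $\omega_1=v$, and, having chosen $\omega_{[s]}$ inducing a subtree with $\omega_{[s]}\subsetneq V(H)$, pick any vertex $\omega_{s+1}\notin\omega_{[s]}$ having a neighbor in $\omega_{[s]}$. Such a vertex exists because $H$ is connected, and by the acyclicity observation it has exactly one neighbor in $\omega_{[s]}$, so $\omega_{[s+1]}$ again induces a subtree. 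Every ordering produced this way is recursive and $(v)$-rooted, so $\Omega$ is nonempty; in fact a single such ordering would already suffice.

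Finally I would verify the two conditions in the definition of a $(v)$-rooted contractible ordering, where $m=1$ and $\rho=(v)$. The first, that every $\omega\in\Omega$ is $(v)$-rooted, holds by construction. For the second, I would observe that its hypothesis---that some $\omega_{s+1}$ is adjacent to two vertices of $\omega_{[s]}$---is never satisfied, again by the acyclicity observation; hence the implication holds vacuously for every $\omega\in\Omega$ and every $s$, and no isomorphism fixing $v$ ever needs to be produced. This makes $\Omega$ a $(v)$-rooted contractible ordering. The only point requiring any care is this acyclicity observation, since the whole argument rests on the claim that a recursive ordering of a tree never creates a vertex with two earlier neighbors; I expect this to be the crux, although it is immediate from the no-cycles property and is already recorded in the characterization of recursive orderings of trees given in the text. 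Everything else is bookkeeping.
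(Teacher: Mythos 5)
Your proof is correct and takes essentially the same route as the paper: the paper likewise observes that in a recursive ordering of a tree each $\omega_{s+1}$ has exactly one earlier neighbor, so the contractibility condition is vacuous, and it simply uses the singleton $\Omega=\{\omega\}$ for one BFS-type ordering rooted at $v$ rather than the set of all $(v)$-rooted recursive orderings (a difference you already note is immaterial).
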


\begin{proof}
Let $t=|V(H)|$ and let $\omega=(\omega_1,\dots,\omega_t)$ be a recursive ordering of $H$ with $\omega_1=v$ (for example, a breadth first search ordering rooted at $v$). 
Then $\omega$ is clearly a $(v)$-rooted recursive ordering of $H$.
Since for every $s\in[t-1]$, $\omega_{s+1}$ is adjacent to exactly one vertex in $\omega_{[s]}$, it follows that $\{\omega\}$ is a $(v)$-rooted contractible ordering of $H$.
\end{proof}

\subsubsection{Universal outerplanar graphs}\label{sec:universal}

Let $H$ be a maximal outerplanar graph on at least three vertices. 
Note that $H$ has a unique outerplanar embedding (that is, an embedding in the plane such that every vertex is incident to the unbounded region), up to homeomorphisms of the plane. 
The \emph{weak dual tree} of $H$ is the tree $T$ that has a vertex for each bounded face of the outerplanar embedding of $H$ and an edge joining pairs of vertices of $T$ whose corresponding faces share an edge in the outerplanar embedding of $H$.
If $H$ is a maximal outerplanar graph with at most two vertices, then we define the weak dual tree of $H$ to be the empty graph.

If $H$ is a maximal outerplanar graph and if $\omega=(\omega_1,\dots,\omega_t)$ is a recursive ordering of $H$, then $\omega_{s+1}$ is adjacent to exactly two vertices in $\omega_{[s]}$ for all $s\in[t-1]$ with $s\geq 2$.
Note that each recursive ordering $\omega=(\omega_1,\dots,\omega_t)$ of $H$ gives a recursive ordering $\tau=(\tau_1,\dots,\tau_{t-2})$ of the weak dual tree $T$ of $H$ (and vice versa), where for each $s\in[t-1]$ with $s\geq 2$, $\tau_{s-1}$ is the bounded face of $H$ whose three incident vertices are $\omega_{s+1}$ and the two vertices in $\omega_{[s]}$ adjacent to $\omega_{s+1}$.

Let us collect some more easy observations.
\begin{lemma}\label{lem:weakdualtreeobs}
    Let $H$ be a maximal outerplanar graph on at least three vertices and let $T$ be the weak dual tree of $H$.
    Then the following statements hold.
    \begin{enumerate}[label={(\alph*)}]
        \item For each subtree $T'$ of $T$, there is a maximal outerplanar graph $H'$ that is an induced subgraph of $H$ such that $T'$ is the weak dual tree of $H'$; for every maximal outerplanar induced subgraph $H'$ of $H$, the weak dual tree of $H'$ is a subtree of $T$. 
        \label{lem:weakdualtreeobs1}
        \item If $e$ is an edge on the boundary of the outerplanar embedding of $H$, then the weak dual tree of $H/e$ is obtained from $T$ by contracting an edge incident to $f$, where $f$ is the bounded face of $H$ incident to $e$. \label{lem:weakdualtreeobs2}
    \end{enumerate}
\end{lemma}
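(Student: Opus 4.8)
The plan is to handle statements (a) and (b) separately, but both rest on one observation about the fixed outerplanar embedding of $H$: since $H$ is maximal outerplanar, every vertex lies on the outer cycle $C$ and every bounded face is a triangle. I would first record the key fact that \emph{every triangle of $H$ is a bounded face}. Indeed, if $xyz$ induces a triangle, then $x,y,z$ lie on $C$, so the closed region bounded by the curve $xyz$ contains no other vertex of $H$ in its interior; since edges do not cross and have no available interior endpoint, no edge enters this region either, so it is a single face. For the second assertion of (a), let $H'=H[V(H')]$ be a maximal outerplanar induced subgraph (if $|V(H')|\le 2$ its weak dual is empty and there is nothing to prove). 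Each bounded face of $H'$ is a triangle $xyz$, and as $H'$ is induced this is a triangle of $H$, hence a bounded face of $H$ by the key fact. This gives an injection from the faces of $H'$ to $V(T)$ that preserves the shared-edge adjacency, because an edge shared by two faces of $H'$ is an edge of $H$ lying in the same two faces of $H$. Since the weak dual of $H'$ is connected, its image is a subtree of $T$.

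For the first assertion of (a) I would induct on $|V(T)|-|V(T')|$. If $T'=T$, take $H'=H$. Otherwise $T$ has a leaf $\ell\notin V(T')$: the set $V(T)\setminus V(T')$ is nonempty and, since $T'$ is connected, each component $C$ of $T-V(T')$ is joined to $T'$ by a single edge, so any leaf of $C$ other than its attachment vertex (or the unique vertex of $C$ when $|C|=1$) is a leaf of $T$ outside $V(T')$. The leaf $\ell$ corresponds to an ear of $H$, namely a degree-$2$ vertex $u$ whose two neighbours are adjacent; deleting $u$ yields the induced subgraph $H\setminus u$, which is maximal outerplanar with weak dual $T-\ell$, and $T'$ is still a subtree of $T-\ell$. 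Applying the inductive hypothesis inside $H\setminus u$ produces the required $H'$, which is induced in $H\setminus u$ and hence in $H$.

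For (b), write $e=ab$ and $f=abc$, and assume $|V(H)|\ge 4$ (if $|V(H)|=3$, then $T$ is a single vertex, $H/e$ has two vertices and empty weak dual, and the statement is vacuous). Since $e$ is a boundary edge it lies in the unique triangle $f$, so $c$ is the only common neighbour of $a$ and $b$; hence contracting $e$ into a vertex $w$ creates exactly one pair of parallel edges, the images of $ac$ and $bc$, and after deleting one copy we obtain a triangulated polygon on $|V(H)|-1$ vertices, so $H/e$ is maximal outerplanar. Tracking faces, every bounded face other than $f$ survives with $a,b$ relabelled to $w$, while $f$ is destroyed and $ac,bc$ are identified into the single edge $wc$. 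As one edge of $f$ is the boundary edge $e$, we have $\deg_T(f)\le 2$. If $\deg_T(f)=2$, with $f$ adjacent in $T$ to $g_1$ along $ac$ and to $g_2$ along $bc$, then in $H/e$ the faces $g_1,g_2$ both meet $wc$ and so become adjacent; thus the weak dual of $H/e$ is $T$ with $f$ deleted and $g_1g_2$ added, which is exactly $T/fg_1$. If $\deg_T(f)=1$, deleting the leaf $f$ equals contracting its unique incident edge. In both cases the weak dual of $H/e$ is $T$ with an edge incident to $f$ contracted.

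The step I expect to demand the most care is the bookkeeping of bounded faces and their shared-edge incidences under vertex deletion (in (a)) and edge contraction (in (b)): one must verify that no spurious triangle or chord is created and that the parallel pair identified in (b) is the only collision. The clean way to organise this is precisely the combination of the key fact that every triangle is a face with the observation that a boundary edge lies in exactly one triangle.
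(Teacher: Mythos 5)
Your proof is correct. For the first assertion of (a) and for (b) you follow essentially the same route as the paper: you realize a subtree of $T$ by repeatedly peeling off leaves of $T$, i.e.\ ears (degree-2 vertices with adjacent neighbours) of $H$, and you track how the bounded faces and their shared edges transform when a boundary edge is contracted. The paper phrases the latter via planar duality (delete the dual edge joining $f$ to the outer face, then contract one of the two remaining dual edges at $f$), which is the same computation as your case split on $\deg_T(f)\in\{1,2\}$. Where you genuinely diverge is the second assertion of (a): the paper asserts that every maximal outerplanar induced subgraph $H'$ of $H$ can be obtained by repeatedly deleting degree-2 vertices, and reads off the subtree from the corresponding leaf deletions; you instead prove the statement directly by observing that every triangle of $H$ is a bounded face (all vertices lie on the outer cycle, so the triangle's interior contains no vertex and hence no edge), so the bounded faces of $H'$ inject into those of $H$ preserving shared-edge adjacency, and connectivity of the weak dual of $H'$ forces the image to be a subtree of $T$. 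Your version is more self-contained, since it does not rely on the (true but unjustified in the paper) reverse-peeling claim, and the same ``every triangle is a face'' observation also does the work in (b) of showing that contracting a boundary edge creates exactly one parallel pair. The only caveats are trivial boundary cases (the empty subtree $T'$, and $|V(H)|=3$ in (b)), which the paper glosses over in exactly the same way.
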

\begin{proof}
Let $\ell$ be a vertex of degree at most 1 in $T$. Then $\ell$ is a bounded face in the outerplanar embedding of $H$ with at least two of its incident edges on the boundary; let $v_\ell$ be a vertex of degree 2 in $H$ incident with $\ell$. Then $T\setminus\ell$ is the weak dual tree of $H\setminus v_\ell$.

Every subtree $T'$ of $T$ can be obtained from $T$ by repeatedly deleting vertices of degree at most 1, so $T'$ is the weak dual tree of an induced subgraph of $H$ obtained by repeatedly deleting the corresponding vertices of degree 2. 
Conversely, every maximal outerplanar graph $H'$ that is an induced subgraph of $H$ can be obtained from $H$ by repeatedly deleting vertices of degree 2, and the weak dual tree of $H'$ can be obtained from $T$ by repeatedly deleting the corresponding vertices of degree at most 1.
This proves the first statement of the lemma.

Now let $e$ be an edge on the boundary of $H$. Since $H$ is a maximal outerplanar graph, $e$ is incident to a bounded face $f$ which has length 3. Let $z$ denote the vertex incident to $f$ but not to $e$. Then $H/e$ is obtained by contracting $e$ and deleting one of the two parallel edges joining $z$ to the contracted vertex. In the planar dual, this corresponds to deleting the edge joining $f$ to the unbounded face of $H$, and contracting one of the two remaining edges incident with $f$. Hence the weak dual tree of $H/e$ is obtained from $T$ by contracting an edge incident to $f$. This proves the second statement of the lemma.
\end{proof}

If $H$ is a maximal outerplanar graph, then every vertex of the weak dual tree of $H$ has degree at most three.
A {\it cubic tree} is a 1-vertex tree or a tree whose every vertex has degree one or three.
For a nonnegative integer $h$, the {\it complete cubic tree with height $h$} is a cubic tree that contains a vertex $r$ such that every path from $r$ to a vertex of degree one contains exactly $h$ edges.
The {\it universal outerplanar graph with height $h$} is the maximal outerplanar graph $H$ such that its weak dual tree is the complete cubic tree with height $h$.

An \emph{$h$-center} of a graph $T$ is a vertex $r\in V(T)$ such that for every $v\in V(T)$, there is a path in $T$ from $r$ to $v$ with at most $h$ edges.
The \emph{radius} of a graph $T$ is the smallest integer $h$ for which an $h$-center exists. 
Note that if $T$ is a complete cubic tree with height $h$, then the radius of $T$ is $h$ and $T$ has a unique $h$-center.

The following result justifies the name of universal outerplanar graphs. 

\begin{lemma} \label{lem:universalouterplanar}
Let $h$ be a nonnegative integer.
Let $G$ be the universal outerplanar graph with height $h$.
Let $T$ be the weak dual tree of $G$.
Let $r$ be the $h$-center of $T$ and let $x^1,x^2,x^3$ be the three vertices of $G$ incident to $r$.
Let $H$ be a maximal outerplanar graph on at least three vertices such that its weak dual tree has an $h$-center $r_H$, and let $x_H^1, x_H^2,x_H^3$ be the three vertices of $H$ incident to $r_H$.
Then there exists a subgraph embedding\footnote{A {\it subgraph embedding} from a graph $H$ to a graph $G$ is an injection $\phi: V(H) \rightarrow V(G)$ such that $\phi(x)\phi(y) \in E(G)$ for every $xy \in E(H)$.} $\phi$ from $H$ to $G$ such that $\phi(x_H^i)=x^i$ for all $i\in[3]$.
\end{lemma}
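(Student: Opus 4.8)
The plan is to reduce the statement to a \emph{rooted} version attached to a single boundary edge, prove that version by induction on the height, and then glue three instances of it around the central triangle. For an edge $e=uv$ on the boundary of a maximal outerplanar graph $H'$, the face incident with $e$ is a triangle $uvw$, and rooting the weak dual tree of $H'$ at this face turns it into a binary tree (each face has three edges, and the two besides $e$ are each shared with at most one further face). Call the height of this rooted tree the \emph{$e$-depth} of $H'$. Let $U_k$ be the rooted maximal outerplanar graph whose $e$-rooted weak dual tree is the complete binary tree of height $k$: so $U_0=K_3$, and for $k\ge 1$, $U_k$ is a root triangle $u^\ast v^\ast w^\ast$ with a copy of $U_{k-1}$ glued along $u^\ast w^\ast$ and a copy of $U_{k-1}$ glued along $v^\ast w^\ast$. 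Deleting the $h$-center $r$ from the weak dual tree of $G$ leaves three complete binary trees of height $h-1$, each rooted at a neighbor of $r$; by part~(a) of Lemma~\ref{lem:weakdualtreeobs} these correspond to three induced subgraphs of $G$, namely copies of $U_{h-1}$, one glued along each edge of the central triangle $x^1x^2x^3$.

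The rooted lemma I would prove is: if $(H',e)$ has $e$-depth at most $k$, with $e=uv$ and incident triangle $uvw$, then there is a subgraph embedding $\psi\colon H'\to U_k$ with $\psi(u)=u^\ast$ and $\psi(v)=v^\ast$ (so necessarily $\psi(w)=w^\ast$). I induct on $k$. The base case $k=0$ forces $H'=K_3$ and is immediate. For $k\ge 1$, the edges $uw$ and $vw$, when internal, root sub-outerplanar graphs $H_1\ni u$ and $H_2\ni v$, each of $e$-depth at most $k-1$ by part~(a) of Lemma~\ref{lem:weakdualtreeobs}. I apply the induction hypothesis to embed $H_1$ into the copy of $U_{k-1}$ glued along $u^\ast w^\ast$ (with $u\mapsto u^\ast$, $w\mapsto w^\ast$) and $H_2$ into the copy glued along $v^\ast w^\ast$ (with $v\mapsto v^\ast$, $w\mapsto w^\ast$), and take the union of these two maps together with $u\mapsto u^\ast$ and $v\mapsto v^\ast$. (When one of the edges $uw$, $vw$ is on the boundary, the corresponding branch is absent and there is nothing to embed there.) To see this union is well defined and injective, note that $H_1$ and $H_2$ share only $w$, and the two arms of $U_k$ share only $w^\ast$; both facts come from the \emph{fan} structure of maximal outerplanar graphs, namely that the faces incident with a fixed vertex $z$ form a path in the weak dual tree, so any vertex lying in branches on both sides of the root triangle must be one of $u,v,w$. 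Edge-preservation is then automatic, since every edge of $H'$ lies in $H_1$, lies in $H_2$, or is $e=uv$, which maps to the root edge $u^\ast v^\ast$ of $U_k$.

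To deduce Lemma~\ref{lem:universalouterplanar} I root at the central triangle. Since $r_H$ is an $h$-center of the weak dual tree of $H$, deleting $r_H$ leaves at most three subtrees, each of which, rooted at its neighbor of $r_H$ (a vertex at distance $1$ from $r_H$), has height at most $h-1$. By part~(a) of Lemma~\ref{lem:weakdualtreeobs} the subtree beyond an internal edge $x_H^i x_H^j$ of the central triangle corresponds to an arm of $H$ of $e$-depth at most $h-1$, rooted at $x_H^i x_H^j$. Applying the rooted lemma with $k=h-1$, I embed this arm into the copy of $U_{h-1}$ of $G$ glued along $x^i x^j$, sending $x_H^i\mapsto x^i$ and $x_H^j\mapsto x^j$; taking the union of the three arm embeddings together with $x_H^i\mapsto x^i$ for $i\in[3]$ yields $\phi$. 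The embeddings agree on the central vertices by construction, the fan argument again shows distinct arms of $H$ (and of $G$) meet only in central vertices, giving injectivity, and edge-preservation holds because each central edge maps to an edge of the central triangle of $G$ and every other edge lies in some arm.

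The routine part is the decomposition and the tree/height bookkeeping, which are handled by Lemma~\ref{lem:weakdualtreeobs}. The step I expect to require the most care is making the induction hypothesis \emph{orientation-aware}: it must specify which endpoint of the root edge goes where, so that when the three arms are reassembled the three shared central vertices $x_H^1,x_H^2,x_H^3$ receive consistent images. Closely tied to this is verifying the disjointness of the arms, i.e.\ that the only vertices shared between sub-outerplanar graphs hanging off different edges of a triangle are the triangle's own vertices; this is where the fan (separation) structure of maximal outerplanar graphs is essential, and it is the one geometric fact not already recorded in Lemma~\ref{lem:weakdualtreeobs} that I would need to establish.
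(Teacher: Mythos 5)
Your proposal is correct, but it organizes the induction differently from the paper. The paper inducts on $h$ by peeling off the outermost \emph{layer}: it passes from $G$ and $H$ to the universal outerplanar graph $G_1$ of height $h-1$ and the corresponding core $H_1$ (faces at distance at most $h-1$ from $r_H$), applies the inductive hypothesis to embed $H_1$ into $G_1$ fixing the central triangle, and then extends the embedding to the outermost vertices using an injection from $V(H)\setminus V(H_1)$ into the boundary edges of $H_1$ matched against the corresponding bijection for $G$; consistency at $x^1,x^2,x^3$ comes for free from the inductive hypothesis, and no gluing or disjointness argument is needed. You instead recurse \emph{arm by arm}: you isolate a rooted, orientation-aware sub-lemma about embedding into the edge-rooted graph $U_k$ whose dual is a complete binary tree, prove it by the two-subtree recursion at the root face, and then glue three instances around the central triangle. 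Both routes are sound and both lean on Lemma \ref{lem:weakdualtreeobs}(a); the extra cost of yours is exactly what you identify — you must verify that arms hanging off distinct edges of a triangle meet only in that triangle's vertices (true by outerplanarity, but not recorded in Lemma \ref{lem:weakdualtreeobs}) and you must keep the root-edge orientation in the hypothesis so the three central vertices receive consistent images. The compensating benefit is modularity: your rooted lemma about $U_k$ is essentially the statement underlying Proposition \ref{prop:binarytree} (the complete-binary-tree case), whose proof the paper only sketches as ``similar,'' so your route delivers that case with no additional work.
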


\begin{proof}
We prove this lemma by induction on $h$.
When $h=0$, both $G$ and $H$ are isomorphic to $K_3$, so the desired subgraph embedding $\phi$ clearly exists.
So we may assume that $h \geq 1$ and that the lemma holds for $h-1$.

Since $G$ is the universal outerplanar graph with height $h$, $T$ is the complete cubic tree with height $h$, and $r$ is the unique $h$-center of $T$.
Let $T_1$ be the subgraph of $T$ such that $T_1$ is the complete cubic tree with height $h-1$ centered at $r$.
By Lemma \ref{lem:weakdualtreeobs}\ref{lem:weakdualtreeobs1}, $T_1$ is the weak dual tree of a subgraph $G_1$ of $G$, and $G_1$ is a universal outerplanar graph with height $h-1$.
Moreover, there exists a bijection $\iota$ from $V(G) \setminus V(G_1)$ to the set of edges of $G_1$ incident to a vertex of $G_1$ with degree two in $G_1$ such that for every $v \in V(G) \setminus V(G_1)$, $v$ is adjacent to both ends of $\iota(v)$, and $\iota(v)$ is incident to a face of $G_1$ corresponding to a vertex of $T$ with distance exactly $h-1$ from $r$.

Let $T_H$ be the weak dual tree of $H$.
By assumption, $r_H$ is an $h$-center of $T_H$.
Let $T_H'$ be a subgraph of $T_H$ induced by the vertices of $T_H$ with distance from $r_H$ at most $h-1$, so that $r_H$ is an $(h-1)$-center of $T_H'$.
By Lemma \ref{lem:weakdualtreeobs}\ref{lem:weakdualtreeobs1}, $T_H'$ is the weak dual tree of a maximal outerplanar induced subgraph $H_1$ of $H$.
Moreover, there exists an injection $\iota_H$ from $V(H) \setminus V(H_1)$ to the set of edges of $H_1$ incident to a vertex of $H_1$ with degree two in $H_1$ such that for every $v \in V(H) \setminus V(H_1)$, $v$ is adjacent to both ends of $\iota_H(v)$, and $\iota_H(v)$ is incident to a face of $H_1$ corresponding to a vertex of $T_H$ with distance exactly $h-1$ from $r_H$.

By the inductive hypothesis, there exists a subgraph embedding $\phi$ from $H_1$ to $G_1$ such that $\phi(x_H^i)=x^i$ for all $i\in[3]$.
Then for every $v \in V(H) \setminus V(H_1)$, there exists $v' \in V(G) \setminus V(G_1)$ such that $\iota_H(v)=\iota(v')$, and we define $\phi(v)=v'$.
Note that $\phi$ is a subgraph embedding from $H$ to $G$ such that $\phi(x_H^i)=x^i$ for all $i\in[3]$.
\end{proof}

We now prove that universal outerplanar graphs are contractibly orderable. In fact, they admit an $(x)$-rooted contractible ordering, where $x$ is a vertex incident to the center of its weak dual tree.

\begin{proposition}\label{prop:universal}
For every nonnegative integer $h$, the universal outerplanar graph $H$ with height $h$ admits an $(x)$-rooted contractible ordering, where $x$ is a vertex incident to the $h$-center of the weak dual tree of $H$.
\end{proposition}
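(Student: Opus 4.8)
The plan is to exhibit an explicit set $\Omega$ and verify it is an $(x)$-rooted contractible ordering. Write $T$ for the complete cubic tree of height $h$, $r$ for its unique $h$-center, and $x=x^1,x^2,x^3$ for the three vertices of $H$ incident to the central face $r$. For a recursive ordering $\omega$ and $3\le s\le |V(H)|$, let $T_s\subseteq T$ be the weak dual tree of the maximal outerplanar graph $H[\omega_{[s]}]$ (a subtree of $T$ by Lemma \ref{lem:weakdualtreeobs}\ref{lem:weakdualtreeobs1}). I would define $\Omega$ to be the set of all $(x)$-rooted recursive orderings $\omega$ of $H$ such that, for every such $s$, the tree $T_s$ has an $h$-center that is incident to $x$. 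Nonemptiness is immediate: order the three vertices of $r$ first, with $\omega_1=x$, and then add the remaining faces in order of increasing distance from $r$; every prefix tree then contains $r$ as an $h$-center, and $x$ is incident to $r$. Each member of $\Omega$ is recursive by definition, so the first bullet of the definition holds, and since $\{x\}$ is a singleton the requirement $\{\omega_i,\omega_j\}\ne\{\rho_1\}$ is automatic.

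The content is in verifying the contraction bullet. Fix $\omega\in\Omega$ and $s\ge 3$, and suppose $\omega_{s+1}$ is adjacent to $\omega_i,\omega_j$ in $\omega_{[s]}$; since $H$ is maximal outerplanar, the edge $\omega_i\omega_j$ lies on the boundary of $H[\omega_{[s]}]$. By Lemma \ref{lem:weakdualtreeobs}\ref{lem:weakdualtreeobs2}, $H'':=H[\omega_{[s]}]/\omega_i\omega_j$ is again maximal outerplanar and its weak dual tree $T''$ is obtained from $T_s$ by contracting an edge at the face $f$ incident to $\omega_i\omega_j$; equivalently, $f$ is suppressed. Because $\omega_i\omega_j$ is a boundary edge, $f$ has degree at most $2$ in $T_s$, so the contraction cannot raise the maximum degree above $3$, and since contraction does not increase distances, $T''$ has radius at most $h$.

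The crux is to show that $T''$ still has an $h$-center incident to $x$; the re-embedding then comes for free. Let $c$ be an $h$-center of $T_s$ incident to $x$. If $c\ne f$, then $c$ survives in $T''$, its eccentricity does not increase, and it remains incident to $x$ (if $x\in\{\omega_i,\omega_j\}$ the two ends merge to a vertex that still plays the role of $x$), so $c$ works. If $c=f$, I would use that $f$ has degree at most $2$: suppressing a degree-$\le 2$ center leaves a neighbor $g$ of $f$ with eccentricity at most $h$ in $T''$ by the usual leaf/path estimate, and a short check using that $g$ shares an edge of the triangle $f$ with $x$ (before or after the merge) shows $g$ is incident to $x$. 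This case analysis, though elementary, is the main obstacle, since one must track simultaneously how the center moves and how the marked vertex $x$ sits on the collapsing triangle.

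With an $h$-center face $r''$ of $T''$ incident to $x$ in hand, I would apply Lemma \ref{lem:universalouterplanar} with $H''$ as source, $r''$ as its $h$-center, and $x$ as the first of the three root vertices of $r''$, to obtain a subgraph embedding $\phi$ of $H''$ into $H$ with $\phi(x)=x^1=x$ (the degenerate case $s=3$, where $H''\cong K_2$ falls below the hypothesis of Lemma \ref{lem:universalouterplanar}, is handled directly by matching to $H[\omega'_{[2]}]\cong K_2$). Since a maximal outerplanar graph on $k$ vertices has exactly $2k-3$ edges and $H$ is outerplanar, $\phi$ is necessarily an isomorphism onto the induced subgraph $H[\phi(V(H''))]$, which is maximal outerplanar and contains the central triangle $r$ (as $\phi$ fixes $x^1,x^2,x^3$). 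Finally I would order $\phi(V(H''))$ by a breadth-first traversal of its weak dual subtree starting from $r$ with $\omega'_1=x$, and extend it to all of $V(H)$ by adding the remaining faces in order of distance from $r$; every prefix tree then contains $r$ as an $h$-center incident to $x$, so the resulting ordering $\omega'$ lies in $\Omega$, satisfies $\omega'_{[s-1]}=\phi(V(H''))$, and $\phi^{-1}$ is the required isomorphism from $H[\omega'_{[s-1]}]$ to $H''$ fixing $x$. This completes the verification that $\Omega$ is an $(x)$-rooted contractible ordering.
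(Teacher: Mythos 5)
Your proof is correct and follows essentially the same route as the paper: both take $\Omega$ to be a family of recursive orderings anchored at the central face, observe via Lemma \ref{lem:weakdualtreeobs} that each contracted prefix is maximal outerplanar with weak dual tree a minor of $T$, and invoke Lemma \ref{lem:universalouterplanar} to re-embed it into $H$ fixing $x$. The only difference is that the paper defines $\Omega$ as all recursive orderings whose first three entries are the three vertices of the central face $f$, so every prefix's dual tree contains $f$ and $f$ automatically survives as an $h$-center after contraction, which eliminates the center-tracking case analysis you identify as the crux.
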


\begin{proof}
Let $h$ be a nonnegative integer, and let $H$ be the universal outerplanar graph with height $h$.
Let $T$ be the weak dual tree of $H$. 
Let $f$ be the $h$-center of $T$ and let $x,y,z$ denote the three vertices of $H$ incident to the face $f$.
Let $\Omega$ be the set of recursive orderings of $H$ such that the first three entries are $(x,y,z)$.
Obviously, $\Omega \neq \emptyset$, and every $\omega\in\Omega$ is $(x)$-rooted.

Let $\omega \in \Omega$ and let $s \in [|V(H)|-1]$ with $s\geq 3$.
Since $H$ is a maximal outerplanar graph, $\omega_{s+1}$ is adjacent to two vertices in $\omega_{[s]}$, say $\omega_i$ and $\omega_j$, and since $\omega$ is a recursive ordering of $H$, we have $\omega_i\omega_j\in E(H)$.
Clearly $\{\omega_i,\omega_j\}\neq \{x\}$, so it suffices to show that  and that there exists $\omega'\in \Omega$  and an isomorphism $\phi$ from $H[\omega_{[s]}]/\omega_i\omega_j$  to $H[\omega'_{[s-1]}]$  such that $\phi(x)=x$.

Note that for every maximal outerplanar induced subgraph $H'$ of $H$ containing the three vertices incident to $f$, there exists $\omega'\in\Omega$ such that $H' = H[\omega'_{[|V(H')|]}]$.

Since $\omega$ is a recursive ordering of $H$, $H[\omega_{[s]}]$ is a maximal outerplanar subgraph of $H$ and $\omega_i\omega_j$ is an edge on the boundary of the outerplanar embedding of $H[\omega_{[s]}]$.
Hence $H[\omega_{[s]}]/\omega_i\omega_j$ is a maximal outerplanar graph and the weak dual tree $T'$ of $H[\omega_{[s]}]/\omega_i\omega_j$ is a minor of $T$ by Lemma \ref{lem:weakdualtreeobs}, so $f$ is still an $h$-center of $T'$.
By Lemma \ref{lem:universalouterplanar}, there exists a subgraph embedding $\phi$ from $H[\omega_{[s]}]/\omega_i\omega_j$ to $H$ such that $\phi(x)=x$.
Since $\phi$ is a subgraph embedding, it is an isomophism from 
$H[\omega_{[s]}]/\omega_i\omega_j$ to its image, which is a maximal outerplanar subgraph of $H$.
By the note in the previous paragraph, there exists $\omega'\in\Omega$ such that $H[\omega'_{[s-1]}]$ is the image of $\phi$.
In other words, $\phi$ is an isomorphism from $H[\omega_{[s]}]/\omega_i\omega_j$ to $H[\omega'_{[s-1]}]$ such that $\phi(x)=x$.
\end{proof}

A similar argument applies to maximal outerplanar graphs whose weak dual trees are complete binary trees: for a positive integer $h$, a \emph{complete binary tree with height $h$} is a tree $T$ with a root vertex $r$ obtained from a complete cubic tree $T'$ of height $h$ with $h$-center $r$ by deleting $C$, where $C$ is a connected component of $T'\setminus r$.
If $H$ is a maximal outerplanar graph whose weak dual tree is a complete binary tree $T$ with root $r$, then there is an edge $xy$ on the boundary of $H$ incident to the face $r$.
A straightforward adaptation of the proof of Proposition \ref{prop:universal} shows that the set $\Omega$ of all $(x,y)$-rooted recursive orderings of $H$ is an $(x,y)$-rooted contractible ordering; we omit this proof. 
\begin{proposition}
    \label{prop:binarytree}
    For every positive integer $h$, if $H$ is a maximal outerplanar graph whose weak dual tree is a complete binary tree with height $h$ and root $r$, then $H$ admits an $(x,y)$-rooted contractible ordering, where $xy$ is an edge on the boundary of $H$ incident to $r$.
\end{proposition}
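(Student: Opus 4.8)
The plan is to follow the proof of Proposition~\ref{prop:universal} essentially verbatim, replacing the single root by the pair $(x,y)$ and replacing the cubic universality statement (Lemma~\ref{lem:universalouterplanar}) by a binary analogue. Let $T$ be the complete binary tree of height $h$ with root $r$, and let $x,y,w$ be the three vertices of $H$ incident to the face $r$, labelled so that $xy$ is the boundary edge incident to $r$. Define $\Omega$ to be the set of all $(x,y)$-rooted recursive orderings of $H$. Since $xy$ is a boundary edge, the root face is the only triangle of $H$ containing it, so $w$ is the unique common neighbour of $x$ and $y$; consequently every recursive ordering $\omega$ with $\omega_1=x$ and $\omega_2=y$ satisfies $\omega_3=w$. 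A recursive ordering of $H$ beginning $x,y,w$ exists (e.g.\ one induced by a breadth-first search of $T$ rooted at $r$), so $\Omega\neq\emptyset$, and every element of $\Omega$ is $(x,y)$-rooted by construction.

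Fix $\omega\in\Omega$ and $s\in[|V(H)|-1]$ with $s\geq 3$. As in Proposition~\ref{prop:universal}, since $H$ is maximal outerplanar, $\omega_{s+1}$ is adjacent to exactly two vertices $\omega_i,\omega_j\in\omega_{[s]}$, and $\omega_i\omega_j$ is an edge on the boundary of $H[\omega_{[s]}]$. I would first check $\{\omega_i,\omega_j\}\neq\{x,y\}$: otherwise $\omega_{s+1}$ would be a common neighbour of $x$ and $y$, forcing $\omega_{s+1}=w=\omega_3$ and contradicting $s\geq 3$. It then remains to produce $\omega'\in\Omega$ together with an isomorphism $\phi$ from $H[\omega'_{[s-1]}]$ to $H[\omega_{[s]}]/\omega_i\omega_j$ with $\phi(x)=x$ and $\phi(y)=y$. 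By Lemma~\ref{lem:weakdualtreeobs}, $H[\omega_{[s]}]/\omega_i\omega_j$ is again maximal outerplanar and its weak dual tree $T'$ is a minor of $T$ in which $r$ remains an $h$-center of degree at most $2$ (contraction can only decrease the degree of $r$ and the distances from $r$). Granting the binary universality lemma below, one embeds $H[\omega_{[s]}]/\omega_i\omega_j$ back into $H$ fixing $x$ and $y$; since every maximal outerplanar induced subgraph of $H$ containing the root face equals $H[\omega'_{[s-1]}]$ for some $\omega'\in\Omega$, this yields the required $\omega'$ and $\phi$, exactly as in Proposition~\ref{prop:universal}.

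The one nonroutine ingredient, which I expect to be the main obstacle, is the binary analogue of Lemma~\ref{lem:universalouterplanar}: if $M$ is a maximal outerplanar graph whose weak dual tree has an $h$-center $r_M$ of degree at most $2$, with incident boundary edge $x_My_M$ and third vertex $w_M$, then there is a subgraph embedding from $M$ into the universal binary outerplanar graph $H$ of height $h$ sending $x_M,y_M,w_M$ to $x,y,w$. I would prove this by induction on $h$, peeling off the outermost layer of faces to reduce to height $h-1$, just as in Lemma~\ref{lem:universalouterplanar}, with the single triangle as the base case. The point that makes the adaptation work is that the complete binary tree and the complete cubic tree of the same height differ only at the root (degree $2$ versus degree $3$): every vertex at distance $1,\dots,h-1$ from the root has degree $3$ in both, and peeling the outer layer leaves the root untouched. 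Hence the induction is identical to the cubic case except that the root face branches into two height-$(h-1)$ universal binary outerplanar subgraphs rather than three, and the hypothesis $\deg(r_M)\leq 2$ is exactly what guarantees that the at most two root-branches of $M$ can be accommodated by the two root-branches of $H$. This degree bound is precisely the property preserved under the contraction in the previous paragraph, which is why the two halves of the argument fit together.
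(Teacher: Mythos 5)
Your proposal is correct and is essentially the adaptation of Proposition \ref{prop:universal} that the paper has in mind: the paper omits this proof, saying only that it is ``a straightforward adaptation'' of the cubic case, and your argument supplies exactly that adaptation (the set of all $(x,y)$-rooted recursive orderings, a binary analogue of Lemma \ref{lem:universalouterplanar}, and the check that the root face retains degree at most $2$). One small caveat: your parenthetical claim that contraction ``can only decrease the degree of $r$'' is not true of arbitrary tree contractions (merging $r$ with a degree-$3$ neighbour would give degree $\deg(r)+1$), but it does hold here because the contracted edge of the weak dual tree is incident to the face of $H[\omega_{[s]}]$ containing the boundary edge $\omega_i\omega_j$, which therefore has degree at most $2$ (at most $1$ when it is the root face, since then two of its three sides lie on the boundary), so the merge never raises the root's degree above $2$.
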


\subsubsection{Constructing more contractibly orderable graphs} \label{sec:moreCOexamples}

We now give a general construction to obtain larger graphs with rooted contractible orderings from smaller ones.

Let $H$ be a graph on $t$ vertices for some positive integer $t$.
Let $\rho=(\rho_1,\dots,\rho_m)$ be a sequence of vertices of $H$ for some nonnegative integer $m$ with $0 \leq m \leq 2$.
We say that an edge $e$ of $H$ is \emph{$\rho$-contractible} if
    \begin{itemize}
        \item there exist a $\rho$-rooted contractible ordering $\Omega$, $\omega \in \Omega$, and an isomorphism $\phi$ from $H[\omega_{[t-1]}]$ to $H/e$ such that $\phi(\rho_i)=\rho_i$ for every $i \in [\min\{m,t-1\}]$.
    \end{itemize}
In this case, we call $\Omega$ a {\it witness} for $e$.

Note that if $e$ is $\rho$-contractible and $H$ has at least two edges, then at most one end of $e$ is an entry of $\rho$.
If $H$ admits a $\rho$-contractible ordering $\Omega$, then for each $\omega= (\omega_1,\dots,\omega_t)\in\Omega$ such that $\omega_t$ has degree at least 1 in $H$, we have that each edge $e$ incident to $\omega_t$ is $\rho$-contractible because $H/e$ is equivalent to $H[\omega_{[t-1]}]$.
In particular, if $H$ is a connected graph with at least one edge that admits a $\rho$-rooted contractible ordering, then $H$ has a $\rho$-contractible edge.
Also note that, by definition, if there exists a $\rho$-contractible edge of $H$, then $H$ admits a $\rho$-rooted contractible ordering.

The next lemma describes three ways to combine two contractibly orderable graphs to form a larger contractibly orderable graph:
If $H$ and $H'$ are contractibly orderable, then their disjoint union is contractibly orderable; if $H'$ admits a $(v)$-rooted contractible ordering, then gluing $H'$ to $H$ by identifying $v$ with any vertex of $H$ gives a contractibly orderable graph; finally, if $H'$ admits a $(u,v)$-rooted contractible ordering, then gluing $H'$ to $H$ by identifying $(u,v)$ with a contractible edge of $H$ gives a contractibly orderable graph.

\begin{lemma} \label{lem:attachingroots}
Let $H$ and $H'$ be graphs.
Let $\rho=(\rho_1,\dots,\rho_m)$ and $\rho'=(\rho'_1,\dots,\rho'_{m'})$ be sequences of vertices of $H$ and $H'$, respectively, for some $m$ and $m'$ with $m,m' \in \{0,1,2\}$.
Suppose $H$ admits a $\rho$-rooted contractible ordering $\Omega$, and suppose $H'$ admits a $\rho'$-rooted contractible ordering.
If one of the following conditions hold:
    \begin{enumerate}
        \item $m'=0$ and $V(H)\cap V(H')=\emptyset$,
        \item $m'=1$ and $V(H)\cap V(H')=\{\rho'_1\}$,
        \item $m'=2$, $V(H)\cap V(H')=\{\rho'_1,\rho'_2\} \neq \{\rho_1,\rho_2\}$,
        and $\rho'_1\rho'_2$  a $\rho$-contractible edge of $H$ with $\Omega$ a witness,
    \end{enumerate}
then $H\cup H'$ admits a $\rho$-rooted contractible ordering.
Moreover, if $e'$ is a $\rho'$-contractible edge of $H'$, then $e'$ is a $\rho$-contractible edge of $H \cup H'$.
\end{lemma}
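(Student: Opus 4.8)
The plan is to construct a $\rho$-rooted contractible ordering of $H \cup H'$ by concatenating an ordering of $H$ with an ordering of $H'$ in a way that respects the gluing. The key observation driving the whole argument is that after the vertices of $H$ have been enumerated, each subsequent vertex of $H'$ attaches to the already-enumerated set in exactly the same local way it attaches within $H'$, so recursiveness is inherited; and the contraction/isomorphism condition can likewise be verified by reducing to whichever of $H$ or $H'$ the contracted edge lives in. First I would fix a $\rho$-rooted contractible ordering $\Omega$ of $H$ and a $\rho'$-rooted contractible ordering $\Omega'$ of $H'$. For each pair $\omega \in \Omega$, $\omega' \in \Omega'$, I would form a candidate ordering of $H \cup H'$ by listing $\omega_1, \dots, \omega_t$ (the vertices of $H$) followed by $\omega'_{m'+1}, \dots, \omega'_{t'}$ (the vertices of $H'$ not already appearing, i.e.\ omitting the first $m'$ entries $\rho'$, which by the gluing condition are exactly $V(H) \cap V(H')$). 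The set $\widehat\Omega$ of all such concatenations, over all valid pairs, is my proposed $\rho$-rooted contractible ordering.

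Next I would verify the two defining properties. For recursiveness: the initial segment through position $t$ is just $\omega$, which is recursive in $H$; for a later vertex $\omega'_k$ with $k > m'$, its neighbors in $H \cup H'$ are its neighbors in $H'$ (no edges are added between $H \setminus H'$ and $H' \setminus H$), and these all lie in $\omega'_{[k-1]}$, which is contained in the already-enumerated set. Hence $\omega'_k$ attaches to at most two already-seen vertices, and if exactly two then they are adjacent in $H'$ and so in $H \cup H'$; this gives recursiveness. The $\rho$-rootedness is immediate since the ordering begins with $\omega$, which begins with $\rho$. The central work is the contraction condition: given $\omega'_{k+1}$ adjacent to two vertices $\omega'_i, \omega'_j$, I must produce another ordering in $\widehat\Omega$ whose relevant induced subgraph realizes the contraction. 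For this I would apply the contractibility of $\Omega'$ in $H'$ to replace $\omega'$ by some $\omega'' \in \Omega'$ realizing the isomorphism, keeping the $H$-part $\omega$ fixed, and check that the resulting concatenation lies in $\widehat\Omega$ and witnesses the required isomorphism fixing $\rho$.

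The main obstacle, and where I expect the bulk of the care to go, is the boundary behavior at the gluing interface, particularly in case (3) where $\rho'_1\rho'_2$ is identified with a $\rho$-contractible edge of $H$. Here the subtlety is that when the contracted edge $\omega'_i\omega'_j$ is incident to the shared vertices $\rho'_1, \rho'_2$, the induced subgraph $(H \cup H')[\text{first } s \text{ vertices}]$ straddles both $H$ and $H'$, so the isomorphism certifying the contraction cannot be read off from $\Omega'$ alone. The definition of $\rho'$-contractible excludes $\{\omega'_i,\omega'_j\} = \{\rho'_1,\rho'_2\}$, and the hypothesis that $\Omega$ witnesses $\rho'_1\rho'_2$ as a $\rho$-contractible edge of $H$ is precisely what lets me handle contractions involving the interface: I would use $\Omega$ to rewrite the $H$-side while leaving the $H'$-side intact, and glue the two isomorphisms along the shared vertices. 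Verifying that these two partial isomorphisms agree on $\{\rho'_1,\rho'_2\}$ and together fix $\rho$ is the delicate bookkeeping step. The final sentence of the lemma---that a $\rho'$-contractible edge $e'$ of $H'$ remains $\rho$-contractible in $H \cup H'$---then follows by taking an ordering in $\widehat\Omega$ ending at the appropriate vertex of $H'$ and invoking the isomorphism already established, so I would dispatch it as a short corollary of the construction rather than a separate argument.
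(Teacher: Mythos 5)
Your proposal is correct and follows essentially the same route as the paper: form the set of concatenated orderings $\psi_{\omega,\omega'}=(\omega_1,\dots,\omega_t,\omega'_{m'+1},\dots,\omega'_{t'})$, verify recursiveness segment by segment, and split the contraction condition according to whether the contracted edge lies in $V(H)$ (where it must equal $\rho'_1\rho'_2$ and the $\rho$-contractibility witness $\Omega$ applies) or meets $V(H')\setminus V(H)$ (where the $H'$-isomorphism is glued with the identity on $V(H)$). The one simplification worth noting is that in the interface case the paper observes that the new vertex must be $\omega'_3$, forcing $s=t$ and $\psi_{[s]}=V(H)$, so the contraction lives entirely inside $H$ and no gluing of partial isomorphisms is actually needed there.
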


\begin{proof}
Let $t=|V(H)|$ and $t'=|V(H')|$.
Let $G=H \cup H'$ and $\tau=|V(G)|=t+t'-m'$.
Let $\Omega'$ be a $\rho'$-rooted contractible ordering of $H'$.
If $H'$ has a $\rho'$-contractible edge, then fix such an edge $e'$ and assume without loss of generality that $\Omega'$ is a witness for $e'$.

For each $\omega=(\omega_1,\dots,\omega_t)\in\Omega$ and $\omega'=(\omega'_1,\dots,\omega'_{t'})\in\Omega'$, define $\psi_{\omega,\omega'} = (\omega_1,\dots,\omega_t,\omega'_{m'+1},\dots,\omega'_{t'})$; clearly, $\psi_{\omega,\omega'}$ is a $\rho$-rooted ordering of $G$.
Let 
    \begin{equation*}
        \Psi=\{\psi_{\omega,\omega'}: \omega\in\Omega \text{ and } \omega'\in\Omega'\}.
    \end{equation*}
Since $\Omega'$ is a $\rho'$-rooted contractible ordering of $H'$, $\{\omega'_i: i \in [m']\}=\{\rho'_i: i \in [m']\}=V(H) \cap V(H')$ for every $\omega' \in \Omega'$, so every member of $\Psi$ is a $\rho$-rooted ordering of $G$.

To prove this lemma, it suffices to show that $\Psi$ is a $\rho$-rooted contractible ordering of $G$ and that, if $e'$ is a $\rho'$-contractible edge of $H'$, then $e'$ is a $\rho$-contractible edge of $G$ with witness $\Psi$. 
Let us first show that $\Psi$ is a $\rho$-rooted contractible ordering of $G$.

    Clearly, every member of $\Psi$ is a $\rho$-rooted ordering of $G$.
    If $m=2$ and $v$ is a vertex of $G$ adjacent to both $\rho_1$ and $\rho_2$, then by the third condition of the lemma, we have $\{\rho_1,\rho_2\} \not \subseteq V(H')$, so $v \in V(H)$; since $\Omega$ is a $\rho$-contractible ordering of $H$, $v=\omega_3$ for every $\omega \in \Omega$, so $v=\psi_3$ for every $\psi \in \Psi$.

    Let $\psi \in \Psi$; then there exists $\omega \in \Omega$ and $\omega' \in \Omega'$ such that $\psi=\psi_{\omega,\omega'}$.
    It remains to show that, for all $s\in[\tau-1]$:
    \begin{enumerate}
        \item[(i)] $\psi_{s+1}$ is adjacent to at most two vertices in $\psi_{[s]}$, and
        \item[(ii)] if $\psi_{s+1}$ is adjacent to exactly two vertices in $\psi_{[s]}$, say $\psi_i$ and $\psi_j$, then 
            \begin{enumerate}
                \item $\psi_i\psi_j\in E(G)$, and
                \item if $s\geq 3$, then $\{\psi_i,\psi_j\}\neq \{\rho_1,\dots,\rho_m\}$ and there exists $\psi'\in\Psi$ and an isomorphism $\phi$ from $G[\psi'_{[s-1]}]$ to $G[\psi_{[s]}]/\psi_i\psi_j$ such that $\phi(\rho_k)=\rho_k$ for every $k\in[m]$.
            \end{enumerate} 
        \end{enumerate}

    First consider the case that $s\in[t-1]$. 
Then $\psi_{s+1}=\omega_{s+1}$ and $\psi_{[s]}=\omega_{[s]}$.
Since $\omega$ is a recursive ordering of $H$, we have that $\psi_{s+1}=\omega_{s+1}$ is adjacent to at most two vertices in $\psi_{[s]}=\omega_{[s]}$.
If $\psi_{s+1}$ is adjacent to exactly two vertices in $\psi_{[s]}$, say $\psi_i$ and $\psi_j$, then $\psi_i\psi_j=\omega_i\omega_j\in E(H)\subseteq E(G)$ since $\omega$ is a recursive ordering of $H$; furthermore, since $\Omega$ is a $\rho$-rooted contractible ordering of $H$, if $s\geq 3$, then there exists $\omega^1\in\Omega$ and an isomorphism $\phi$ from $H[\omega^1_{[s-1]}]=G[\psi^1_{[s-1]}]$ to $H[\omega_{[s]}]/\omega_i\omega_j = G[\psi^1_{[s]}]/\psi^1_i\psi^1_j$ such that $\phi(\rho_k)=\rho_k$ for all $k\in[m]$, where $\psi^1 = \psi_{\omega^1,\omega'}$.

So we may assume $s\in [\tau-1]\setminus[t-1]$.
Define $s' = s-t+m'\geq m'$, so that $\psi_{s+1} = \omega'_{s'+1}\in V(H')\setminus V(H)$. Then the neighborhood of $\psi_{s+1}$ in $\psi_{[s]}$ is contained in $\omega'_{[s']} = \psi_{[s]\setminus [t]} \cup \{\rho'_1,\dots,\rho'_{m'}\}$.
Since $\omega'$ is a recursive ordering of $H'$, it follows that $\psi_{s+1}$ is adjacent to at most two vertices in $\omega'_{[s
]}$ (hence in $\psi_{[s]}$), so (i) holds.

Therefore it suffices to show (ii).
Suppose that $\psi_{s+1}$ is adjacent to exactly two vertices in $\psi_{[s]}$, say $\psi_i$ and $\psi_j$.
Again, since $\omega'$ is a recursive ordering of $H'$, we have $\psi_i\psi_j\in E(H')\subseteq E(G)$ and (ii)(a) holds.

To show (ii)(b), assume $s\geq 3$.
First suppose $\{\psi_i,\psi_j\}\subseteq V(H)$. 
Since $\psi_{s+1}$ is a vertex in $V(H')\setminus V(H)$, we have $\{\psi_i,\psi_j\}\subseteq V(H')$, hence $\{\psi_i,\psi_j\}\subseteq V(H)\cap V(H') = \{\rho'_1,\rho'_2\}$.
This implies $m'=2$ and $\{\psi_i,\psi_j\} = \{\rho'_1,\rho'_2\}$. 
By the assumptions of the lemma, $\rho'_1\rho'_2$ is a $\rho$-contractible edge of $H$ with witness $\Omega$, so $\psi_i\psi_j=\rho_1'\rho_2' \in E(H)\cap E(H')$, and there exist $\omega^2\in \Omega$ and an isomorphism $\phi$ from $H[\omega^2_{[t-1]}]$ to $H/\psi_i\psi_j$ such that $\phi(\rho_k)=\rho_k$ for all $k\in[\min\{m,t-1\}]$.
Let $\psi^2=\psi_{\omega^2,\omega'}$.
Since $\Omega'$ is a $\rho'$-rooted contractible ordering of $H'$, and $\psi_{s+1}$ is adjacent to both ends of $\psi_i\psi_j=\rho_1'\rho_2'$, we have $\psi_{s+1} = \omega'_3=\omega'_{m'+1}$, hence $s=t$ and $\psi_{[s]}=V(H)=\psi^2_{[s]}$.
Hence $\phi$ is an isomorphism from $H[\omega^2_{[s-1]}]=G[\psi^2_{[s-1]}]$ to $H/\psi_i\psi_j=G[\psi^2_{[s]}]/\psi_i\psi_j$ such that $\phi(\rho_k)=\rho_k$ for all $k\in[m]$ (where $m = \min\{m,t-1\}$ because $m\leq 2$ and  $t=s\geq 3$).

Therefore, we may assume $\{\psi_i,\psi_j\}\not\subseteq V(H)$.
Recall that $s'=s-t+m'$, so $\psi_s=\omega'_{s'}$.
Since $\{\psi_i,\psi_j\}\subseteq \omega'_{[s']}$, we have $s'\geq 2$. If $s'=2$, then $\{\psi_i,\psi_j\}=\{\omega'_1,\omega'_2\}$ and $G[\psi_{[s]}]/\psi_i\psi_j = (H\cup H'[\omega'_{[2]}])/\omega'_1\omega'_2$.
Since $\{\psi_i,\psi_j\}\not\subseteq V(H)$, we have that $G[\psi_{[s]}]/\psi_i\psi_j$ is equivalent to $(H\cup H'[\omega'_{[2]}])\setminus \omega'_2 = H\cup H'[\{\omega'_1\}] = G[\psi_{[s-1]}]$. 
In particular, there is an isomorphism from $G[\psi_{[s-1]}]$ to $G[\psi_{[s]}]/\psi_i\psi_j$ mapping $\rho_k$ to itself for all $k\in[m]$.

We may thus assume $s'\geq 3$.
Since $\Omega'$ is a $\rho'$-rooted contractible ordering of $H'$, there exists $\omega^3\in\Omega'$ and an isomorphism $\phi'$ from $H'[\omega^3_{[s'-1]}]$ to $H'[\omega'_{[s']}]/\psi_i\psi_j$ such that $\phi'(\rho'_k)=\rho'_k$ for all $k\in [m']$.
Define $\psi^3=\psi_{\omega,\omega^3}$.
Note that $\psi^3_{[s-1]} = V(H)\cup \omega^3_{[s'-1]}$.
Let $\phi^3$ be the map from $G[\psi^3_{[s-1]}]$ to $G[\psi_{[s]}]/\psi_i\psi_j$ defined by $\phi^3(v)=v$ for all $v\in V(H)$ and $\phi^3(v)=\phi'(v)$ for all $v\in \omega^3_{[s'-1]}$.
Note that $\phi^3$ is well-defined since $V(H)\cap V(H')=\{\rho'_1,\dots,\rho'_{m'}\}$ and $\phi'(\rho'_k)=\rho'_k$ for all $k\in[m']$, and that $\phi^3$ is an isomorphism from $G[\psi^3_{[s-1]}]$ to $G[\psi_{[s]}]/\psi_i\psi_j$.
We also have $\phi^3(\rho_k)=\rho_k$ for all $k\in[m]$ because $\phi^3$ is the identity map on $V(H)$.  
This completes the proof that $\Psi$ is a $\rho$-rooted contractible ordering of $G$.

It remains to show that, if $e'$ is a $\rho'$-contractible edge of $H'$, then $e'$ is a $\rho$-contractible edge of $G$ with witness $\Psi$.
Let $e'$ be a $\rho'$-contractible edge of $H'$ and recall that $\Omega'$ can be assumed to be a witness for $e'$.
By the definition of $\rho'$-contractible edge, there exists $\omega^{(e')}\in\Omega'$ and an isomorphism $\phi^{(e')}$ from $H'[\omega^{(e')}_{[t'-1]}]$ to $H'/e'$ such that $\phi^{(e')}(\rho'_i)=\rho'_i$ for every $i \in [\min\{m',t'-1\}]$.

Define $\phi^*(v)=v$ for $v \in V(H)$ and define $\phi^*(v)=\phi^{(e')}(v)$ for $v \in \omega^{(e')}_{[t'-1]}$.
Note that $\phi^*$ is well-defined since $V(H) \cap \omega^{(e')}_{[t'-1]} = \{\rho'_i: i \in [\min\{m',t'-1\}]\}$ and $\phi^{(e')}(\rho'_i)=\rho'_i$ for $i\in[\min\{m',t'-1\}]$.
Let $\omega^* \in \Omega$ be arbitrary and define $\psi^*=\psi_{\omega^*,\omega^{(e')}}$.
Then $\phi^*$ is an isomorphism from $H \cup H'[\omega'_{[t'-1]}] = G[\psi^*_{[\tau-1]}]$ to $H \cup (H'/e') = G/e'$ such that $\phi^*(\rho_i)=\rho_i$ for every $i \in [\min\{m,\tau-1\}]$.
This shows that $e'$ is a $\rho$-contractible edge in $G$, and completes the proof of the lemma.
\end{proof}

Note that Proposition \ref{prop:tree} also follows from Lemma \ref{lem:attachingroots}.
Now we show another example that can be deduced from Lemma \ref{lem:attachingroots}.
Note that if $H$ is a maximal outerplanar graph with $|V(H)|\geq 4$ whose weak dual tree is a path, then $H$ has exactly two vertices of degree 2 and they are non-adjacent.

\begin{proposition} \label{prop:weakdualpath}
Let $H$ be a maximal outerplanar graph with at least three vertices whose weak dual tree is a path.
Let $uv$ be an edge such that $u$ or $v$ has degree 2 in $H$. 
Let $e$ be an edge of $H$ incident to a vertex $z\not\in\{u,v\}$ of degree two in $H$. 
Then $e$ is a $(u,v)$-contractible edge of $H$ with witness $\Omega$ for some $(u,v)$-rooted contractible ordering $\Omega$. 
\end{proposition}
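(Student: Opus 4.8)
The plan is to induct on $|V(H)|$, peeling off the ear triangle containing $z$ and reattaching it via condition~(3) of Lemma~\ref{lem:attachingroots}. For the base case $|V(H)|=3$, the graph $H$ is a triangle with vertex set $\{u,v,z\}$, and $e$ is one of the two edges incident to $z$. Taking $\Omega=\{(u,v,z)\}$, one checks directly that $\Omega$ is a $(u,v)$-rooted contractible ordering: the ordering is recursive and $(u,v)$-rooted, and the contractibility requirement is vacuous since it only concerns indices $s\geq 3$ with $s\leq t-1=2$. Since $z=\omega_3=\omega_t$ has positive degree and $e$ is incident to $z$, the observation following the definition of $\rho$-contractible edges shows that $e$ is $(u,v)$-contractible with witness $\Omega$.

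For the inductive step, suppose $|V(H)|=n\geq 4$ and the proposition holds for all smaller graphs. As the weak dual tree of $H$ is a path and $z$ has degree $2$, the vertex $z$ is the apex of an ear triangle, which I write as $\{z,p,q\}$, where $pq$ is the internal edge shared with the neighbouring triangle; note $e\in\{zp,zq\}$. I would set $H':=H[\{z,p,q\}]$ with roots $\rho'=(p,q)$, and $H_0:=H\setminus z$ with roots $\rho=(u,v)$. By Lemma~\ref{lem:weakdualtreeobs}, deleting the ear apex $z$ yields a maximal outerplanar graph $H_0$ whose weak dual tree is a (shorter) path, and $H_0$ still contains the edge $uv$ with $u$ or $v$ of degree $2$. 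The triangle $H'$ has the $(p,q)$-rooted contractible ordering $\{(p,q,z)\}$, and exactly as in the base case $e$ is a $(p,q)$-contractible edge of $H'$. Since an endpoint of $uv$ has degree $2$, the edge $uv$ lies on the boundary of $H$, whereas $pq$ is an internal edge, so $\{p,q\}\neq\{u,v\}$; moreover $V(H_0)\cap V(H')=\{p,q\}$. Thus, once I establish that $pq$ is a $(u,v)$-contractible edge of $H_0$ (with a witness $\Omega$), condition~(3) of Lemma~\ref{lem:attachingroots} applies to give a $(u,v)$-rooted contractible ordering of $H_0\cup H'=H$, and its ``moreover'' clause, applied to the $(p,q)$-contractible edge $e$ of $H'$, yields that $e$ is a $(u,v)$-contractible edge of $H$, as required.

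The remaining and main point is to check that $pq$ is $(u,v)$-contractible in $H_0$, which I would deduce by applying the induction hypothesis to $H_0$ with $pq$ in the role of $e$; this requires producing a degree-$2$ vertex $z''$ of $H_0$ incident to $pq$ and distinct from $u$ and $v$. After deleting $z$, the edge $pq$ becomes a boundary edge of $H_0$, and the former neighbour of the $z$-ear becomes a new ear of $H_0$ whose apex $z'\in\{p,q\}$ is a degree-$2$ vertex incident to $pq$; the obstacle is to verify $z'\notin\{u,v\}$. Let $a\in\{u,v\}$ be the endpoint of $uv$ with degree $2$ in $H$, and let $b$ be the other endpoint. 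For $n\geq 5$ the $z$-ear and the $a$-ear lie at opposite ends of the dual path, so $a\notin\{p,q\}$ and $a$ remains a degree-$2$ vertex of $H_0$; as noted just before the proposition statement, $H_0$ (having at least four vertices) then has exactly the two non-adjacent degree-$2$ vertices $a$ and $z'$. Hence $z'\neq a$, and if $z'=b$ then $a,b$ would be the two non-adjacent degree-$2$ vertices, contradicting $ab=uv\in E(H_0)$; so $z'\notin\{u,v\}$ and we may take $z''=z'$. The degenerate case $n=4$, where $H_0$ is a single triangle, is handled directly: since $\{p,q\}\neq\{u,v\}$, some endpoint of $pq$ lies outside $\{u,v\}$, and that endpoint serves as $z''$. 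With the induction hypothesis thereby supplying the witness $\Omega$, the proof is complete.
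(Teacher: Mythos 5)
Your proof is correct and follows essentially the same route as the paper's: induct on $|V(H)|$, peel off the ear triangle at $z$, apply the induction hypothesis to $H\setminus z$ with the chord $pq$ in the role of $e$, and reattach via condition~(3) of Lemma~\ref{lem:attachingroots} together with its ``moreover'' clause. The only difference is that you verify explicitly (and correctly) that the inductive hypothesis applies to $H\setminus z$ --- i.e.\ that $pq$ is incident to a degree-$2$ vertex of $H\setminus z$ outside $\{u,v\}$ --- a point the paper's proof leaves implicit.
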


\begin{proof}
We proceed by induction on $|V(H)|$.
Note that for every graph isomorphic to $K_3$ with vertices $a,b,c$, every edge in $\{bc,ac\}$ is an $(a,b)$-contractible edge.
This implies that the proposition holds when $|V(H)|=3$.
So we may assume that $|V(H)| \geq 4$ and that the proposition holds for graphs with fewer than $|V(H)|$ vertices.

Since the weak dual tree of $H$ is a path and $z$ has degree two in $H$, $H-z$ is a maximal outerplanar graph whose weak dual is a path.
Let $a,b$ be the neighbors of $z$ in $H$.
Since $|V(H)| \geq 4$ and $H$ is an outerplanar graph whose weak dual tree is a path, we have that $ab$ is an edge of $H-z$ different from $e$ incident with a vertex of degree two in $H-z$.
Since $|V(H-z)| \geq 3$, by the induction hypothesis, $ab$ is a $(u,v)$-contractible edge of $H-z$.
Since $H[\{a,b,z\}]$ is isomorphic to $K_3$, $e$ is an $(a,b)$-contractible edge of $H[\{a,b,z\}]$.
Hence $e$ is a $(u,v)$-contractible edge of $H$ by Statement 3 of Lemma \ref{lem:attachingroots}.
\end{proof}

A \emph{cactus} is a connected graph where every block is either an edge or a cycle.

\begin{proposition} \label{prop:cactus}
Let $G$ be a graph and $r \in V(G)$.
If $G$ is a cactus, then $G$ is a spanning subgraph of a graph that admits a $(r)$-rooted contractible ordering.
\end{proposition}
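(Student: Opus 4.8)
The plan is to induct on the number of blocks of $G$, building the required supergraph one block at a time by repeatedly invoking Lemma \ref{lem:attachingroots}(2). For the base cases (zero or one block), note first that $K_1$ admits the $(r)$-rooted contractible ordering $\{(r)\}$ and that a single edge $rw$ admits $\{(r,w)\}$ (in both cases the recursiveness is trivial and the contractibility condition is vacuous because $t\le 2$). The only nontrivial base case is a single cycle $C$, which I would handle by replacing it with a triangulation: every cycle is a spanning subgraph of the \emph{fan triangulation} from a chosen vertex $c$, which is a maximal outerplanar graph whose weak dual tree is a path. The key point I would use here is the observation that a $(u,v)$-rooted contractible ordering is automatically a $(u)$-rooted contractible ordering, since the isomorphisms $\phi$ guaranteed by the $(u,v)$-rooted definition satisfy $\phi(u)=u$ (in particular), and the extra requirement $\{\omega_i,\omega_j\}\ne\{u\}$ for a single root is automatic as $\{\omega_i,\omega_j\}$ has two elements.

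To make the cycle case precise, I would note that in the fan triangulation of $C$ from $c$, the two degree-$2$ vertices are exactly the two neighbors of $c$ on the cycle, and they are non-adjacent once $|V(C)|\ge 4$. Taking $u=c$ and $v$ to be one of these neighbors, the edge $cv$ satisfies the hypothesis of Proposition \ref{prop:weakdualpath} (namely $v$ has degree $2$), while the other neighbor serves as a second degree-$2$ vertex $z\notin\{u,v\}$; hence Proposition \ref{prop:weakdualpath} yields a $(c,v)$-rooted contractible ordering, which by the previous observation is a $(c)$-rooted contractible ordering. When $|V(C)|=3$ the cycle is $K_3$, which admits a $(c)$-rooted contractible ordering directly (any ordering beginning with $c$ works, as the contractibility condition is only imposed for $s\ge 3$ and here $t=3$). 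Thus in all cases a single cycle is a spanning subgraph of a graph with a $(c)$-rooted contractible ordering for any prescribed vertex $c$.

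For the inductive step, assume $G$ has at least two blocks, so its block–cut tree has at least two leaf blocks. I would choose a leaf block $B$, with unique cut vertex $c$, such that $r$ is not an interior vertex of $B$ (this is always possible: if $r$ is a cut vertex it is interior to no block, and if $r$ is not a cut vertex it is interior to a unique block, so a different leaf block exists). Let $G'=G[V(G)\setminus(V(B)\setminus\{c\})]$; this is a connected cactus with one fewer block containing both $r$ and $c$, so by induction $G'$ is a spanning subgraph of a graph $\mathcal H_0$ admitting an $(r)$-rooted contractible ordering. If $B$ is an edge I replace it by itself, which admits a $(c)$-rooted contractible ordering by Proposition \ref{prop:tree}; if $B$ is a cycle I replace it by the fan triangulation $\hat B$ from $c$, which admits a $(c)$-rooted contractible ordering by the previous paragraph. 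Since the only vertex shared by $\mathcal H_0$ and $\hat B$ is $c$, i.e.\ $V(\mathcal H_0)\cap V(\hat B)=\{c\}$, condition (2) of Lemma \ref{lem:attachingroots} (with $\rho=(r)$ and $\rho'=(c)$) shows that $\mathcal H_0\cup\hat B$ admits an $(r)$-rooted contractible ordering. Finally $V(\mathcal H_0\cup\hat B)=V(G')\cup V(B)=V(G)$ and $E(G)=E(G')\cup E(B)\subseteq E(\mathcal H_0)\cup E(\hat B)$, so $G$ is a spanning subgraph of $\mathcal H_0\cup\hat B$, completing the induction.

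The step I expect to be the main obstacle is the single-cycle base case, specifically extracting a \emph{single}-vertex-rooted contractible ordering from Proposition \ref{prop:weakdualpath}, which only delivers a two-vertex-rooted one; the clean resolution is the observation that $(u,v)$-rooted contractibility implies $(u)$-rooted contractibility, together with the fan-triangulation choice that positions the two degree-$2$ vertices as the neighbors of the chosen root $c$. A secondary point requiring care is the selection of the leaf block in the inductive step so that the global root $r$ is never deleted, which is what allows the induction hypothesis to be applied with the same root.
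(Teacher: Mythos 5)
Your proof is correct and follows essentially the same route as the paper's: induction on the number of blocks, triangulating each cycle block into a fan so that Proposition \ref{prop:weakdualpath} applies, and gluing via condition (2) of Lemma \ref{lem:attachingroots}. The only cosmetic differences are that the paper places the fan's apex at a neighbor of the root rather than at the root itself (both choices work) and handles the leaf block via the induction hypothesis rather than directly; your explicit observation that a $(u,v)$-rooted contractible ordering is also a $(u)$-rooted one is exactly the point the paper uses implicitly.
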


\begin{proof}
Let $G$ be a cactus.
We proceed by induction on the number of blocks of $G$.
We first assume that $G$ has exactly one block.
The proposition obviously holds when $|V(G)|=1$.
So $G$ is either an edge or a cycle. 
Let $v\in V(G)$ be a neighbor of $r$, and let $H$ be the graph obtained from $G$ by adding an edge joining $v$ to every vertex in $V(G)\setminus\{v\}$ that is not already adjacent to $v$.
By Propositions \ref{prop:tree} or \ref{prop:weakdualpath}, $H$ admits a $(r)$-rooted contractible ordering.

We may thus assume that $G$ has at least two blocks.
Let $B$ be a leaf block of $G$. 
Since $G$ is connected, there exists the unique vertex $b$ in $V(B)$ adjacent in $G$ to $V(G)\setminus V(B)$.
Let $G'=G\setminus (V(B)\setminus \{b\})$.
Since $G$ has at least two blocks, we may choose $B$ so that $r \in V(G')$.
Since $G'$ is a connected cactus with fewer blocks than $G$, by the inductive hypothesis, $G'$ is a spanning subgraph of a graph $H'$ that admits a $(r)$-rooted contractible ordering.
Since $B$ is a connected cactus with fewer blocks than $G$, by the inductive hypothesis, $B$ is a spanning subgraph of a graph $B'$ that admits a $(b)$-rooted contractible ordering.
In particular, $B'$ has a $(b)$-rooted contractible edge and $V(H') \cap V(B')=\{b\}$.
So by Statement 2 of Lemma \ref{lem:attachingroots}, $H' \cup B'$ admits a $(r)$-rooted contractible ordering.
This proves the proposition since $G$ is a spanning subgraph of $H' \cup B'$.
\end{proof}

\section{Digraphs}\label{sec:digraphs}

Let $\vec G$ be a digraph.
A {\it directed separation} of $\vec G$ is an ordered pair $(A,B)$ with $A \cup B=V(\vec G)$ such that there does not exist an arc in $\vec G$ from $B-A$ to $A-B$; 
$(A,B)$ is \emph{nontrivial} if $A\setminus B$ and $B\setminus A$ are both nonempty, and its \emph{order} is defined to be $|A\cap B|$.
We say that a subset $S$ of $V(\vec G)$ is \emph{well-in-connected in $\vec G$} if $S\neq V(\vec G)$ and there does not exist a nontrivial directed separation $(A,B)$ in $\vec G$ with $S\subseteq A$ and $|A\cap B|<|S|$.

Recall that, for a digraph $\vec H$, $\vec H^+$ denotes the digraph obtained from $\vec H$ by adding an apex source (a new vertex and new arcs from the new vertex to every other vertex).

\subsection{In-arborescences with an apex source} \label{sec:apexarborescence}
Recall that an in-arborescence is a directed tree with a vertex $r$, called its root, such that every arc is oriented towards the root. 
A \emph{leaf} of an in-arborescence is a vertex with in-degree 0.

We prove Theorem \ref{thm:butterfly_deg} by proving the following more technical lemma, which is analogous to Lemma \ref{lem:contractableorder} except the proof here is simpler because, when adding a new vertex to $S$, the new vertex has only one out-neighbor in $S$ (whereas in Lemma \ref{lem:contractableorder} the new vertex could have up to two neighbors in $S$).
\begin{lemma}\label{lem:apexarbor}
Let $t$ be a positive integer and let $\vec H$ be an in-arborescence on $t$ vertices with root $r$.  
Let $\vec G$ be a digraph.
Let $S \subseteq V(\vec G)$ be well-in-connected in $\vec G$ such that $\vec G[S]$ is isomorphic to a weakly connected subdigraph of $\vec H$ containing $r$.
If every vertex in $\vec G\setminus S$ has out-degree at least $t$, then $\vec G$ contains $\vec H^+$ as a butterfly minor such that the branch set of the apex source is a singleton.
\end{lemma}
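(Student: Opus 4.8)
The plan is to mirror the proof of Lemma~\ref{lem:contractableorder}, arguing by contradiction and choosing a counterexample $(\vec G, S)$ minimizing $|V(\vec G)\setminus S|$ and, subject to that, minimizing $|S|$. Set $s=|S|$ and fix an isomorphism $\phi$ from a weakly connected subdigraph of $\vec H$ containing the root $r$ onto $\vec G[S]$. First I would dispose of the base case $s=t$: here $\vec G[S]\cong\vec H$, and since $S$ is well-in-connected, any vertex $v\in V(\vec G)\setminus S$ admits $t$ internally disjoint paths \emph{directed into} $S$ (this is where the out-degree/in-direction of the separation matters, so I must check that well-in-connectedness together with Menger gives $t$ paths from $v$ to $S$ respecting the arc directions). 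Routing one path to each $\phi(\omega_i)$ and setting $\mu(a)=\{v\}$ with the apex source arcs realized by these paths yields the desired $\vec H^+$ butterfly minor, a contradiction.

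For the inductive step $s<t$, I would first establish the analogue of Claim~\ref{lem:contractibleorder:claim1}: there is no nontrivial directed separation $(A,B)$ with $S\subseteq A$ and $|A\cap B|\le s$. If such a separation existed, well-in-connectedness forces order exactly $s$, and I would route $s$ disjoint directed paths from $S$ to $A\cap B$, contract each into a single vertex, delete the rest of $A$, and apply minimality of $|V(\vec G)\setminus S|$ to the smaller digraph $\vec G'$ with the corresponding $S'$; lifting the resulting minor back along the contracted paths gives the contradiction. The key simplification over the undirected case, as the lemma statement itself flags, is that the next vertex to be added has a unique out-neighbor in $S$: since $\vec H$ is an in-arborescence, the subdigraph $\vec G[S]$ corresponds to a subtree containing $r$, and any leaf-type extension attaches a new vertex with exactly one arc pointing \emph{toward} the already-built part. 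Thus there is no ``two neighbors'' case and no need for any contraction claim like Claim~\ref{lem:contractibleorder:claim2}.

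Concretely, in the inductive step I would pick the unique vertex $u$ of the subtree $\vec H$ onto which $\vec G[S]$ maps whose in-neighbor $w$ lies outside the current image but whose out-arc $(w,u)$ is present in $\vec H$; here $w$ plays the role of the ``$\omega_{s+1}$'' vertex. I then need a vertex $v\in V(\vec G)\setminus S$ with an arc $(v,\phi(u))$ into $S$, so that adding $v$ to $S$ preserves the property that $\vec G[S']$ is isomorphic to a weakly connected subdigraph of $\vec H$ containing $r$. The existence of such a $v$ follows from well-in-connectedness: if no vertex outside $S$ had an arc into $\phi(u)$, then $(S,\,V(\vec G)\setminus\{\phi(u)\})$ would be a nontrivial directed separation of order $\le s-1$, contradicting that $S$ is well-in-connected. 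I must verify the arc orientations carefully: a directed separation forbids arcs from $B\setminus A$ into $A\setminus B$, so the absence of an in-arc to $\phi(u)$ from outside is exactly what lets $\phi(u)$ sit on the interface and produces the contradiction.

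Having added $v$ to form $S'=S\cup\{v\}$, I would check that $S'$ remains well-in-connected (via the strengthened separation claim) and that $\vec G[S']$ is isomorphic to the enlarged subtree of $\vec H$, now with $|V(\vec G)\setminus S'|<|V(\vec G)\setminus S|$; minimality then yields an $\vec H^+$ butterfly minor with singleton apex branch set, the final contradiction. The main obstacle I anticipate is purely directional bookkeeping: ensuring throughout that ``well-in-connected'' supplies paths and separations with the \emph{correct} orientation (into $S$), that the singleton apex source's out-arcs are realized by paths directed \emph{away} from $v$ toward the branch sets, and that every arc of $\vec H$ used points the right way so that the final object is genuinely a butterfly minor (with each added non-root vertex having in-degree $1$ into the contracted path, legitimizing the butterfly contractions) rather than merely a topological minor of the underlying graph.
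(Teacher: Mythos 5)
Your overall strategy is the same as the paper's (minimal counterexample with respect to $|V(\vec G)\setminus S|$, a Menger-based separation claim, and a one-vertex extension exploiting that the new vertex needs only one out-arc into $S$), but two steps, as written, would not go through and need repair. First, in your analogue of Claim \ref{lem:contractibleorder:claim1} the Menger paths must be directed \emph{from} $A\cap B$ \emph{into} $S$, not ``from $S$ to $A\cap B$'' as you wrote (a directed separation with $S\subseteq A$ blocks paths entering $S$ from $B\setminus A$, so that is the direction Menger controls); more importantly, ``lifting the resulting minor back along the contracted paths'' is not automatic for butterfly minors, since an arbitrary contraction of a path need not be a butterfly contraction. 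The paper instead first deletes all arcs leaving the path vertices except the path arcs themselves, so that every vertex of $A\setminus S$ on the paths has out-degree $1$; then $\vec G'$ is a genuine butterfly minor of $\vec G$ and one concludes by transitivity of the butterfly-minor relation. You need this (or an explicit verification that the lifted branch sets remain in-/out-arborescences) for the step to be valid.

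Second, in the extension step you assert that adding $v$ ``preserves the property that $\vec G[S']$ is isomorphic to a weakly connected subdigraph of $\vec H$ containing $r$.'' This can fail: the new vertex $v$ may have several out-arcs into $S$ (and arcs from $S$ into $v$), so $\vec G[S\cup\{v\}]$ need not embed into the in-arborescence $\vec H$, and the inductive hypothesis cannot be applied to $(\vec G, S')$ directly. The paper's fix is to pass to the subdigraph $\vec G'$ obtained from $\vec G$ by deleting all outgoing arcs of $v$ except the one chosen arc into $\phi(u)$, and then to check that $S'$ remains well-in-connected in $\vec G'$ (which holds because the deleted arcs all originate in $A$, so any directed separation of $\vec G'$ with $S'\subseteq A$ is also one of $\vec G$) and that out-degrees outside $S'$ are unaffected. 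Both repairs are routine, but without them the induction does not close.
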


\begin{proof}
Suppose to the contrary and let $(\vec G,S)$ be a counterexample minimizing $|V(\vec G)\setminus S|$.
Let $\vec H'$ be a weakly connected subdigraph of $\vec H$ containing $r$ isomorphic to $\vec G[S]$.

Since $\vec H'$ contains $r$, we have $S \neq \emptyset$.
If $|S|=t$, then $\vec H'=\vec H$ and, since $S$ is well-in-connected in $\vec G$, there exist a vertex $v\in V(\vec G)\setminus S$ and $t$ directed paths in $\vec G$ from $v$ to $S$ only sharing $v$ by Menger's theorem. 
By contracting these paths (towards $S$) into single arcs, we obtain $\vec H^+$ as a butterfly minor of $\vec G$ such that the branch set of the apex source is the singleton $\{v\}$.

So we may assume that $0<|S|<t$. 

    \begin{claim} \label{lem:apexarbor:claim1}
        There does not exist a nontrivial directed separation $(A,B)$ in $\vec G$ such that $S\subseteq A$ and $|A\cap B|\leq |S|$.
    \end{claim}

    \begin{subproof}[Proof of Claim \ref{lem:apexarbor:claim1}]
        Suppose to the contrary that such a directed separation $(A,B)$ exists.
        Since $S$ is well-in-connected in $\vec G$, we have $|A \cap B|=|S|$ and, by Menger's theorem, there exist $|S|$ disjoint directed paths $P_1,\dots,P_{|S|}$ from $A\cap B$ to $S$. 
        Let $\vec G'$ be the digraph obtained from $\vec G$ by  
        \begin{itemize}
            \item deleting every vertex in $A\setminus (S \cup \bigcup_{i=1}^{|S|}V(P_i)) = A \setminus \bigcup_{i=1}^{|S|}V(P_i)$ and every arc $(u,v) \in E(\vec G) \setminus E(\vec G[S] \cup \bigcup_{i=1}^{|S|}P_i)$ with $u \in \bigcup_{i=1}^{|S|}V(P_i)$, and
            \item contracting each directed path $P_i$ into a single vertex $s_i$.
        \end{itemize}
        Note that $\vec G'$ is a butterfly minor of $\vec G$ since, after the first step,  every vertex in $A\setminus S$ has out-degree 1 in $\vec G'$. 
        Now let $S' = \{s_1,\dots,s_{|S|}\}$.
        Then $S'$ is well-in-connected in $\vec G'$ since, if $(A',B')$ is a nontrivial directed separation of $\vec G'$ with $S'\subseteq A'$ and $|A' \cap B'|<|S'|$, then $((A'\setminus S')\cup A,B')$ is a nontrivial directed separation in $\vec G$ with $S\subseteq A \subseteq (A'\setminus S')\cup A$ and with order $|((A'\setminus S')\cup A)\cap B'| = |A' \cap B'|<|S'| = |S|$, contradicting the well-in-connectedness of $S$ in $\vec G$.
        Also note that $\vec G'[S']$ is isomorphic to $\vec G[S]$, which is isomorphic to $\vec H'$,  a weakly connected subdigraph of $\vec H$ containing $r$.
        Moreover, every vertex in $\vec G'\setminus S'$ is a vertex in $\vec G\setminus S$ and has out-degree in $\vec G'$ equal to that in $\vec G$, which is at least $t$. 
        Since $A \setminus B \neq \emptyset$, we have $|V(\vec G') \setminus S'|<|V(\vec G) \setminus S|$.
        Thus, $\vec H^+$ is a butterfly minor of $\vec G'$ by the minimality of $|V(\vec G)\setminus S|$.
        Since $\vec G'$ is a butterfly minor of $\vec G$, it follows that $\vec H^+$ is a butterfly minor of $\vec G$, a contradiction.
    \end{subproof}
    
    Since $0<|S|<t$, there exists a vertex $u'\in V(\vec H)\setminus V(\vec H')$ and a vertex $v'\in V(\vec H')$ such that $(u',v')\in E(\vec H)$. 
    Let $v$ denote the vertex in $\vec G[S]$ corresponding to $v'$. 
    Then there is a vertex $u\in V(\vec G)\setminus S$ such that $(u,v)\in E(\vec G)$; otherwise, $(S,V(\vec G)\setminus \{u\})$ is a nontrivial directed separation of $\vec G$ with order $|S \cap (V(\vec G) \setminus \{u\})| = |S|-1$, contradicting the well-in-connectedness of $S$ in $\vec G$.
    
    Let $S' = S\cup\{u\}$ and let $\vec G'$ denote the digraph obtained from $\vec G$ by deleting all outgoing arcs of $u$ except $(u,v)$.
    Then $\vec G'[S']$ is isomorphic to the digraph $(V(\vec H')\cup\{u'\},E(\vec H')\cup\{(u',v')\})$, which is a weakly connected subdigraph of $\vec H$ containing $r$.
    Moreover, $S'$ is well-in-connected in $\vec G'$: if $(A,B)$ is a nontrivial directed separation of $\vec G'$ with $S'\subseteq A$ and with $|A \cap B|<|S'|$, then $(A,B)$ is also a nontrivial directed separation in $\vec G$ with $S\subseteq A$ and $|A \cap B| \leq |S|$, contradicting Claim \ref{lem:apexarbor:claim1}. 
    Finally, every vertex in $\vec G'\setminus S'$ has out-degree in $\vec G'$ equal to that in $\vec G$, which is at least $t$. 
    Since $|V(\vec G')\setminus S'| = |V(\vec G)\setminus S|-1$, we have that $\vec H^+$ is a butterfly minor of $\vec G'$ by the minimality of $|V(\vec G)\setminus S|$. Hence $\vec H^+$ is also a butterfly minor of $\vec G$, a contradiction.
\end{proof}

Theorem \ref{thm:butterfly_deg}, restated below, now follows from Lemma \ref{lem:apexarbor}.

\butterflydeg*

\begin{proof}
Let $t=|V(\vec H)|$ and let $\vec G$ be a digraph with minimum out-degree at least $t$. 
Note that $\vec G$ has a strongly connected component $Q$ such that for every $v\in V(Q)$, the out-degree of $v$ in $Q$ is equal to the out-degree of $v$ in $\vec G$, which is at least $t$. 
Let $v$ be an arbitrary vertex in $Q$, and let $S = \{v\}$. 
Since $Q$ is strongly connected, we have that $S$ is well-in-connected in $Q$. 
Since $Q[S]$ consists of a single vertex, it is isomorphic to the weakly connected subdigraph of $\vec H$ consisting of the root. 
Moreover, every vertex in $Q \setminus S$ has out-degree at least $t$. 
It follows from Lemma \ref{lem:apexarbor} that $Q$ (and hence $\vec G$) contains $\vec H^+$ as a butterfly minor such that the branch set of the apex source is a singleton.
\end{proof}

\subsection{Directed wheels}\label{sec:directedwheel}

Let $S$ be a subset of $V(\vec G)$.
We define $\partial^-_{\vec G}(S)$ to be the set $\{v\in S: (u,v)\in E(\vec G)$ for some $u\in V(\vec G)\setminus S\}$. 
If the digraph $\vec G$ is clear from context, we simply write $\partial^-(S)$.

We prove Theorem \ref{thm:subdiv_wheel_mid_deg_intro} by proving the following technical lemma.

\begin{lemma} \label{wheel_directed_lemma}
Let $t \geq 2$ be an integer.
Let $\vec G$ be a strongly connected digraph and let $S$ be a subset of $V(\vec G)$ such that 
\begin{itemize}
    \item every vertex in $V(\vec G) \setminus S$ has out-degree at least $t$ in $\vec G$,
    \item there exist a directed cycle $C$ in $\vec G[S]$ and $|\partial^-(S)|$ disjoint directed paths in $\vec G$ from $\partial^-(S)$ to $V(C)$ internally disjoint from $V(C)$, and
    \item there exists an arc $(x,y)\in E(\vec G)$ such that $x\in V(C)$ and $y \in V(\vec G)\setminus S$.
\end{itemize}
   Then $\vec G$ contains a subdivision of $\vec W_t^1$ or a subdivision of $\vec W_{t+1}^2$.
\end{lemma}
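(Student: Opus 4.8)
The plan is to argue by a minimal counterexample, chosen to minimize $|V(\vec G)\setminus S|$, and to drive everything through a single application of Menger's theorem. I first record that $\vec W_t^1$ and $\vec W_{t+1}^2$ have the same coarse shape: a directed rim cycle, a hub $h$ with $t$ outgoing spokes ending at $t$ distinct rim vertices, and one feedback path from a rim vertex back to $h$. The only difference is whether the feedback vertex also receives a spoke (then the rim has $t$ branch vertices and we get $\vec W_t^1$) or not (then the rim has $t+1$ branch vertices and we get $\vec W_{t+1}^2$). I therefore take the hub to be $h=y$, the feedback to be the given arc $(x,y)$ (so its tail $x\in V(C)$ is the feedback vertex), and the rim to be $C$. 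It then suffices to find $t$ internally disjoint directed paths from $y$ to $C$ meeting $C$ only at $t$ distinct endpoints; whether $x$ is among those endpoints decides which of the two subdivisions we realize, with $C$ supplying the subdivided rim.

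To produce these spokes I would reduce ``reaching $C$'' to ``reaching the in-boundary'' $D:=\partial^-(S)$. Since every arc from $V(\vec G)\setminus S$ into $S$ ends in $D$, deleting the interior gives a digraph $\vec G':=\vec G-(S\setminus D)$ in which every vertex of $V(\vec G)\setminus S$ retains out-degree at least $t$. I apply Menger's theorem in $\vec G'$ to the source $y$ and the target set $D$. In the good case there are $t$ internally disjoint $y$-$D$ paths $R_1,\dots,R_t$; taken minimal, they meet $D$ only at distinct endpoints $w_1,\dots,w_t$ and have interiors in $V(\vec G)\setminus S$. Splicing the $R_i$ with the hypothesized family $\mathcal{P}$ of disjoint $D$-$C$ paths (internally disjoint from $C$), by a standard rerouting at first intersections, yields $t$ internally disjoint $y$-$C$ paths ending at $t$ distinct vertices of $C$. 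This is exactly the spoke configuration required above, so $\vec G$ contains a subdivision of $\vec W_t^1$ or of $\vec W_{t+1}^2$. The role of the hypothesis $\mathcal{P}$ is precisely to certify that reaching $D$ is as good as reaching $C$.

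The substance is the bad case. If fewer than $t$ such paths exist, Menger gives a separator $Z$ of $\vec G'$ with $|Z|\le t-1$ cutting $y$ from $D$. Let $U$ be the set of vertices reachable from $y$ in $\vec G'-Z$. Then $U\cap D=\emptyset$, so $U\subseteq V(\vec G)\setminus S$, and every out-neighbor of a vertex of $U$ lies in $U\cup Z$; as each such vertex has out-degree at least $t>|Z|$, it has an out-neighbor in $U$. Hence $\vec G[U]$ has minimum out-degree at least $1$ and contains a directed cycle $C^*$ lying entirely outside $S$. I would then contradict minimality by enlarging $S$ to a set $\hat S\supseteq S\cup V(C^*)$ that again satisfies the three hypotheses but with $C^*$ as its cycle: the out-degree condition is preserved because the new outside is contained in the old one, $\vec G$ remains strongly connected, and $|V(\vec G)\setminus \hat S|<|V(\vec G)\setminus S|$.

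The main obstacle is re-establishing the remaining two hypotheses for the pair $(\hat S,C^*)$: an arc from $V(C^*)$ to $V(\vec G)\setminus\hat S$, and a family of $|\partial^-(\hat S)|$ disjoint paths from $\partial^-(\hat S)$ to $C^*$ that are internally disjoint from $C^*$. For the exit arc I would use strong connectivity of $\vec G$ to find an arc leaving the proper subset $\hat S$ and route its tail along $C^*$; for the boundary-to-cycle paths I would run a Menger/flow argument inside the pocket $U\cup Z$, using the out-degree bound, and shrink $\hat S$ so that its in-boundary is exactly saturated by such a family. Choosing a single $\hat S$ for which all three conditions hold simultaneously—together with the careful verification that the spliced spokes in the good case are genuinely internally disjoint—is where I expect the bulk of the technical bookkeeping to lie.
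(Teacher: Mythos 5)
Your good case (when Menger yields $t$ internally disjoint $y$-to-$\partial^-(S)$ paths) matches the paper's argument: the paper likewise takes $y$ as the hub, routes spokes through $\partial^-(S)$ and the hypothesized path family onto $C$, and uses $(x,y)$ as the feedback arc, with the dichotomy $\vec W_t^1$ versus $\vec W_{t+1}^2$ decided by whether $x$ is a spoke endpoint. The genuine gap is in your bad case. After extracting a directed cycle $C^*$ inside the pocket $U$, you need to re-establish, for some $\hat S\supseteq S\cup V(C^*)$, both an arc leaving $\hat S$ whose tail lies \emph{on $C^*$} and a family of $|\partial^-(\hat S)|$ disjoint paths from $\partial^-(\hat S)$ to $C^*$. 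Neither is available. For the exit arc: a vertex $v\in V(C^*)$ has all its out-neighbors in $U\cup Z$, and it may well have $t-1$ of them in $Z$ (which can lie inside $S$, e.g.\ in $\partial^-(S)$, hence inside $\hat S$) and its one remaining out-neighbor equal to its $C^*$-successor; then every out-arc of every vertex of $C^*$ stays inside $\hat S$. Strong connectivity only produces an arc leaving $\hat S$ from \emph{some} vertex of $\hat S$, not from $C^*$, and ``routing its tail along $C^*$'' is not an operation available to you. For the boundary-to-cycle paths: these measure the \emph{in}-connectivity of $C^*$ within the pocket, which the out-degree hypothesis does not control; the standard Menger fix is to enlarge $\hat S$ past a small separator, but every such enlargement makes the exit-arc problem worse. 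So the induction cannot be restarted as described.

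The paper avoids this by never abandoning $y$: in the bad case ($|\partial^-(S)|\le t-1$) it takes $|\partial^-(S)|$ disjoint $y$-to-$\partial^-(S)$ paths $Q_i$ (which always exist after an enlargement step analogous to your separator reduction), observes that $y$ has out-degree at least $t>|\partial^-(S)|$ and hence a \emph{spare} out-neighbor $y'$ outside $S\cup\bigcup_i V(Q_i)$, and forms the new cycle $C'$ by rerouting the old one through the arc $(x,y)$, one path $Q_j$, and its partner $P_j$. The new exit arc is then $(y,y')$, with $y\in V(C')$ by construction, and the new boundary is covered by the old paths together with trivial paths on $V(Q_j)\subseteq V(C')$. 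This ``thread the new cycle through the hub and spend its leftover out-degree on the exit arc'' step is the idea your proposal is missing, and I do not see how to complete your bad case without it.
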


\begin{proof}
Suppose to the contrary and choose a counterexample $(\vec G,S)$ so  that $|V(\vec G) \setminus S|$ is minimum. 
Let $P_1,P_2,...,P_{|\partial^-(S)|}$ denote the $|\partial^-(S)|$ disjoint directed paths from $\partial^-(S)$ to $V(C)$ internally disjoint from $V(C)$.

\begin{claim} \label{lem:outwheel:claim1}
    There does not exist a directed separation $(A,B)$ of $\vec G$ with $|A \cap B| < |\partial^-(S)|$ such that $S\subseteq A$ and $y \in B \setminus A$.
\end{claim}

\begin{subproof}[Proof of Claim \ref{lem:outwheel:claim1}]
Suppose to the contrary that such a directed separation $(A,B)$ exists.
We choose $(A,B)$ so that $|A \cap B|$ is minimum.

If there exists $v \in A \cap B$ such that there is no arc of $\vec G$ from $B \setminus A$ to $v$, then $(A,B \setminus \{v\})$ is a directed separation of $\vec G$ with $|A \cap (B \setminus \{v\})|<|A \cap B| < |\partial^-(S)|$ such that $S\subseteq A$ and $y \in (B\setminus \{v\})\setminus A \subseteq B\setminus A$, 
contradicting the minimality of $|A \cap B|$.

So for every $v \in A \cap B$, there exists an arc of $\vec G$ from $B \setminus A$ to $v$.
That is, $A \cap B = \partial^-(A)$.
If there does not exist $|A \cap B|$ disjoint directed paths in $\vec G[A]$ from $A \cap B$ to $S$ internally disjoint from $S$, then by Menger's theorem, there exists a directed separation $(A',B')$ of $\vec G[A]$ with $|A' \cap B'|<|A \cap B|$ such that $S \subseteq A'$ and $A \cap B \subseteq B'$, so $(A',B' \cup B)$ is a directed separation of $\vec G$ with order $|A' \cap (B' \cup B)|=|A' \cap B'|<|A \cap B| < |\partial^-(S)|$ such that $S\subseteq A'$ and $y\in B\setminus A\subseteq (B'\cup B)\setminus A'$, 
contradicting the minimality of $|A \cap B|$.

So there exist $|A \cap B|$ disjoint directed paths $Q_1,Q_2,...,Q_{|A \cap B|}$ in $\vec G[A]$, internally disjoint from $S$, from $A \cap B$ to $S$ (hence to $\partial^-(S)$).
By possibly reindexing, we may assume that for each $i \in [|A \cap B|]$, the vertex in $V(Q_i) \cap \partial^-(S)$ is the vertex in $V(P_i) \cap \partial^-(S)$.
Hence $P_1 \cup Q_1, P_2 \cup Q_2,..., P_{|A \cap B|} \cup Q_{|A \cap B|}$ are disjoint directed paths in $\vec G[A]$ from $A \cap B$ to $V(C)$ internally disjoint from $V(C)$.

Let $S'=A$.
Then $S'\subseteq V(\vec G)$ and $\partial^-(S')=\partial^-(A)=A \cap B$.
Note that 
every vertex in $V(\vec G) \setminus S' \subseteq V(\vec G) \setminus S$ has out-degree in $\vec G$ at least $t$.
Moreover, since $V(C) \subseteq S \subseteq A=S'$, we have that $C$ is a directed cycle in $\vec G[S']$ 
and that there are $|A \cap B|=|\partial^-(S')|$ disjoint directed paths in $\vec G$ from $A \cap B=\partial^-(S')$ to $V(C)$ internally disjoint from $V(C)$.
Furthermore, $(x,y)$ is an arc with $x\in V(C)$ and $y\in B\setminus A \subseteq V(\vec G)\setminus S'$.

On the other hand, note that $S\subsetneq S'$: since $S\subseteq A=S'$, if $S\supseteq S'$, then $S=A=S'$, so by the definition of $(A,B)$ we have $|A \cap B|<|\partial^-(S)|$, which gives $|\partial^-(A)|=|A \cap B|<|\partial^-(S)|=|\partial^-(A)|$, a contradiction.
Therefore, $|V(\vec G)\setminus S'|<|V(\vec G)\setminus S|$, and it follows by the minimality of $|V(\vec G)\setminus S|$ that $\vec G$ contains a subdivision of $\vec W_t^1$ or $\vec W_{t+1}^2$, a contradiction.
\end{subproof}

\begin{claim}\label{lem:outwheel:claim2}
    There exist $|\partial^-(S)|$ directed paths in $\vec G[(V(\vec G) \setminus S) \cup \partial^-(S)]$ from $y$ to $\partial^-(S)$ internally disjoint from $S$ only sharing $y$.
\end{claim}

\begin{subproof}[Proof of Claim \ref{lem:outwheel:claim2}]
If there do not exist $|\partial^-(S)|$ directed paths in $\vec G$ from $y$ to $S$ only sharing $y$, then by Menger's theorem, there exists a directed separation $(A,B)$ of $\vec G$ with $|A \cap B|<|\partial^-(S)|$ such that $S \subseteq A$ and $y \in B \setminus A$, contradicting Claim \ref{lem:outwheel:claim1}.
So there exist $|\partial^-(S)|$ paths in $\vec G$ from $y$ to $S$ only sharing $y$.
By taking subpaths, we may assume that these paths are internally disjoint from $S$, hence they end at $\partial^-(S)$ and are contained in $\vec G[(V(\vec G) \setminus S) \cup \partial^-(S)]$.
\end{subproof}

Let $Q_1,Q_2,...,Q_{|\partial^-(S)|}$ be $|\partial^-(S)|$ directed paths in $\vec G[(V(\vec G)-S) \cup \partial^-(S)]$ as in Claim \ref{lem:outwheel:claim2}, chosen so that $\sum_{i=1}^{|\partial^-(S)|}|V(Q_i)|$ is minimum.
This implies that for every $i \in [|\partial^-(S)|]$, exactly one out-neighbor of $y$ in $\vec G$ is contained in $V(Q_i)$.

By possibly reindexing, we may assume that for each $i \in [|\partial^-(S)|]$, we have $V(P_i) \cap \partial^-(S) = V(Q_i) \cap \partial^-(S)$. 

\begin{claim}\label{lem:outwheel:claim3}
The vertex $y$ has an out-neighbor $y'\in V(\vec G) \setminus (S \cup \bigcup_{i=1}^{|\partial^-(S)|}V(Q_i))$. 
\end{claim}

\begin{subproof}[Proof of Claim \ref{lem:outwheel:claim3}]
Note that $\{Q_i \cup P_i: i \in [|\partial^-(S)|]\}$ is a set of $|\partial^-(S)|$ directed paths in $\vec G$ from $y$ to $V(C)$ only sharing $y$.
If $|\partial^-(S)| \geq t$, then these directed paths together with $C$ and the arc $(x,y)$ form a subdivision of $\vec W_t^1$ or $\vec W_{t+1}^2$, a contradiction.

So we may assume $|\partial^-(S)| \leq t-1$.
Since $y \in V(\vec G) \setminus S$, the out-degree of $y$ in $\vec G$ is at least $t$.
Since for every $i \in [|\partial^-(S)|]$, exactly one out-neighbor of $y$ in $\vec G$ is contained in $V(Q_i)$, there exists an out-neighbor $y'$ of $y$ in $\vec G$ not contained in $\bigcup_{i=1}^{|\partial^-(S)|}V(Q_i)$.
Note that $y'$ is also not contained in $S$ since  $y\in V(\vec G)\setminus S$ and $\bigcup_{i=1}^{|\partial^-(S)|}V(Q_i) \supseteq \partial^-(S)$.  
Hence $y' \in V(\vec G) \setminus (S \cup \bigcup_{i=1}^{|\partial^-(S)|}V(Q_i))$.
\end{subproof}

Let $z$ be the vertex in $V(C) \cap \bigcup_{i=1}^{|\partial^-(S)|}V(P_i)$ such that $x \neq z$ and no internal vertex of the directed path in $C$ from $x$ to $z$ is in $\bigcup_{i=1}^{|\partial^-(S)|}V(P_i)$.
Let $P'$ denote the directed path in $C$ from $z$ to $x$. 
Let $j\in [|\partial^-(S)|]$ be the index such that $P_{j}$ contains $z$. Then $C':=P'\cup \{(x,y)\} \cup Q_{j}\cup P_{j}$ is a directed cycle.

Let $S' = S\cup V(Q_j)$.
Then $S'\subseteq V(\vec G)$, and every vertex in $V(\vec G)\setminus S'$ is also in $V(G)\setminus S$ and hence has out-degree at least $t$ in $\vec G$.
We also have that $C'$ is a directed cycle in $\vec G[S']$.

Moreover, we have $\partial^-(S') \subseteq \partial^-(S) \cup (S' \setminus S) \subseteq \partial^-(S) \cup V(Q_{j})$.
Since $V(Q_{j}) \subseteq V(C')$, each vertex $v$ in $V(Q_{j})$ forms a trivial directed path $P_v$ from $V(Q_{j})$ to $V(C')$.
The paths $P_i$ for $i \in [|\partial^-(S)|] \setminus \{j\}$ are disjoint paths from $\partial^-(S)\setminus V(Q_j)$ to $V(C')$.
Hence $\mathcal{P}:=\{P_v: v \in V(Q_{j})\} \cup \{P_i: i \in [|\partial^-(S)|] \setminus \{j\}\}$ is a set of $|V(Q_j)\cup \partial^-(S)|$ disjoint directed paths in $\vec G$ from $\partial^-(S) \cup V(Q_{j})$ to $V(C')$, and since $\partial^-(S') \subseteq \partial^-(S) \cup V(Q_{j})$, it follows that $\mathcal{P}$ contains $|\partial^-(S')|$ disjoint directed paths in $\vec G$ from $\partial^-(S')$ to $V(C')$ internally disjoint from $V(C')$.

Furthermore, by Claim \ref{lem:outwheel:claim3}, there exists an arc $(y,y')\in E(\vec G)$ such that $y\in V(C')$ and $y'\in V(\vec G)\setminus S'$.

Since $y \not\in S$, we have $S\subsetneq S'$ and hence $|V(\vec G) \setminus S'|<|V(\vec G) \setminus S|$.
By the minimality of $|V(\vec G) \setminus S|$ among counterexamples, $\vec G$ contains a subdivision of $\vec W_t^1$ or $\vec W_{t+1}^2$, a contradiction.
This proves the lemma.
\end{proof}

Theorem \ref{thm:subdiv_wheel_mid_deg_intro}, restated below, now follows from Lemma \ref{wheel_directed_lemma}.
\subdivwheel*

\begin{proof}
Note that $\vec G$ has a strongly connected component $Q$ such that for every $v \in V(Q)$, the out-degree of $v$ in $Q$ is equal to the out-degree of $v$ in $\vec G$, which is at least $t$.
Since $Q$ is strongly connected and $t \geq 2$, there exists a directed cycle $C$ in $Q$.
We choose $C$ such that $|E(C)|$ is as small as possible.

Let $x \in V(C)$.
Then there exists $(x,y) \in E(Q)$ with $y \in V(Q) \setminus V(C)$, for otherwise, since $t \geq 2$, there exists an arc of $Q$ from $x$ to another vertex $v$ in $C$ such that $(x,v) \not \in E(C)$, so the arc $(x,v)$ together with the subpath of $C$ from $v$ to $x$ gives a directed cycle with fewer arcs than $C$, a contradiction.

Let $S=V(C)$.
Then $C$ is a directed cycle in $Q[S]$, and there exist $|\partial_Q^-(S)|$ disjoint directed paths from $\partial_Q^-(S)$ to $V(C)$, each having exactly one vertex.
Moreover, every vertex of $Q$ has out-degree at least $t$ in $Q$, and there exists $(x,y) \in E(Q)$ with $y \in V(Q) \setminus S$.
By Lemma \ref{wheel_directed_lemma}, the digraph $Q$ (and hence $\vec G$) contains a subdivision of $\vec W_t^1$ or a subdivision of $\vec W_{t+1}^2$.
Since $\vec C_t^+$ and $\vec W_t^2$ are subgraphs of both $\vec W_t^1$ and $\vec W_{t+1}^2$, we know that $\vec G$ contains a subdivision of $\vec C_t^+$ and a subdivision of $\vec W_t^2$.
\end{proof}

\section{Concluding remarks}
\label{sec:conclusion}
In this paper we studied undirected and directed graphs $H$ for which every undirected and directed graph with minimum degree and out-degree at least $|V(H^+)|-1=|V(H)|$ contains $H^+$ as a minor and a butterfly minor respectively, and as a subdivision.
All of our results are tight since $K_{|V(H^+)|-1}$ and its biorientation have minimum degree and out-degree $|V(H^+)|-2$ and yet do not contain any graph on $|V(H^+)|$ vertices as a minor, a butterfly minor, or a subdivision.

For undirected graphs, since $K_4$ and $K_{2,3}$ are negative answers to Question \ref{q:apexmindegree}, we propose the following conjecture.

\begin{conjecture}\label{conj:apexouterplanar}
If $H$ is an outerplanar graph, then every graph with minimum degree at least $|V(H)| = |V(H^+)|-1$ contains $H^+$ as a minor. 
\end{conjecture}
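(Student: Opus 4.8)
The plan is to reduce Conjecture~\ref{conj:apexouterplanar} to the statement that every maximal outerplanar graph is contractibly orderable, and then to invoke Theorem~\ref{thm:mainmindeg}. First I would observe that any outerplanar graph $H$ is a spanning subgraph of a maximal outerplanar graph $M$ with $V(M)=V(H)$: fix an outerplanar embedding of $H$, read a cyclic order of $V(H)$ off the outer face (nesting the components if $H$ is disconnected), and add chords to triangulate. Since a graph admitting a recursive ordering is $2$-degenerate and hence has at most $2|V(H)|-3$ edges — a bound attained by maximal outerplanar graphs on at least three vertices — a maximal outerplanar graph is a spanning subgraph of a contractibly orderable graph if and only if it is itself contractibly orderable. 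Thus it would suffice to show that every maximal outerplanar graph is contractibly orderable, and the bottleneck is precisely those $H$ that are already maximal outerplanar, where the triangulation $M=H$ is forced and no freedom in the choice of edges remains.

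To prove contractible orderability in general, I would induct on the weak dual tree $T$, building $H$ face by face along $T$ and gluing each new boundary triangle using the third case of Lemma~\ref{lem:attachingroots}: each such triangle is a $(u,v)$-rooted contractibly orderable $K_3$ glued along a $(u,v)$-contractible edge, and the lemma transports a contractible edge to the enlarged graph. After choosing a root of $T$ and an order in which to attach the faces, one would maintain the invariant that the partially built graph admits a rooted contractible ordering in which contractible edges remain available at every place where $T$ still needs to grow. This is exactly how Proposition~\ref{prop:weakdualpath} treats path duals, and how Propositions~\ref{prop:universal} and~\ref{prop:binarytree} treat complete cubic and complete binary trees.

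The hard part will be the contraction-closure condition built into the definition of a contractible ordering. When Lemma~\ref{lem:contractableorder} is forced into its contraction step, it contracts a boundary edge of the partially built maximal outerplanar graph, which by Lemma~\ref{lem:weakdualtreeobs} contracts an edge of the weak dual tree; for the induction to close, the resulting maximal outerplanar graph must re-embed as an induced subgraph whose weak dual is again a subtree of $T$. For complete cubic trees this is exactly what the universality Lemma~\ref{lem:universalouterplanar} supplies, but for an unbalanced or irregular $T$ a contracted dual tree need not be a subtree of $T$, so neither the set of all recursive orderings nor any evident smaller set is closed under these contractions. I expect this to be the crux of the problem.

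Two routes seem plausible for overcoming it. Route (a) is to strengthen Lemma~\ref{lem:attachingroots} so that it can glue unbalanced branches while certifying that the specific boundary edges required at each branch point remain $\rho$-contractible, thereby assembling a closed witness set $\Omega$ for an arbitrary $T$. Route (b) is to strengthen the core Lemma~\ref{lem:contractableorder} itself, allowing the target graph to shrink along a subtree of $T$ during the induction — tracking a varying induced maximal outerplanar subgraph rather than a single fixed $H$ — which would sidestep the need for exact closure at the price of redoing the well-connectedness bookkeeping for a moving target. Of the two, route (b) appears the more promising, since it replaces a rigid combinatorial closure requirement with the weaker geometric fact that contracting a boundary edge always produces a smaller maximal outerplanar graph whose weak dual is a minor of $T$.
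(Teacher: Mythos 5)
This statement is a \emph{conjecture} in the paper, not a theorem: the authors leave it open and only establish it for the class of contractibly orderable graphs (and their spanning subgraphs), which covers cacti and maximal outerplanar graphs whose weak dual tree is a path, a complete cubic tree, or a complete binary tree. So there is no proof in the paper to compare against, and your proposal should be judged as a standalone argument.

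As such, it is not a proof. Your reduction is sound as far as it goes: every outerplanar graph is a spanning subgraph of a maximal outerplanar graph on the same vertex set, and your edge-count observation correctly shows that for a maximal outerplanar $H$ the phrase ``spanning subgraph of a contractibly orderable graph'' buys nothing, so the conjecture (via Theorem~\ref{thm:mainmindeg}) would follow from the single claim that \emph{every} maximal outerplanar graph is contractibly orderable. But you do not prove that claim, and you candidly identify why: the contraction-closure condition in the definition of a contractible ordering requires that after contracting a boundary edge, the resulting maximal outerplanar graph re-embeds (root-preservingly) into the original, which fails for irregular weak dual trees --- precisely why the paper's Propositions~\ref{prop:weakdualpath}, \ref{prop:universal}, and~\ref{prop:binarytree} are restricted to paths, complete cubic trees, and complete binary trees. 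Your routes (a) and (b) are stated as possibilities, not carried out; route (b) in particular would require reproving Lemma~\ref{lem:contractableorder} with a target graph that changes during the induction, and it is not clear that the minimal-counterexample bookkeeping (in particular Claims~\ref{lem:contractibleorder:claim1} and~\ref{lem:contractibleorder:claim2}) survives that change. Moreover, it is not even established that your reduction target is true: if some maximal outerplanar graph is \emph{not} contractibly orderable, route (a) is dead on arrival and the conjecture would need a method outside the paper's framework. In short, you have correctly located the open problem, but the crux you name is exactly the gap that remains.
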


Theorem \ref{thm:mainmindeg} shows that a large class of outerplanar graphs satisfy Conjecture \ref{conj:apexouterplanar}. 
Since every outerplanar graph is 2-degenerate, every apex-outerplanar graph is 3-degenerate. 
Hence Conjecture \ref{conj:apexouterplanar} is implied by the following conjecture of Diwan \cite{diwan1971} on subdivisions.

\begin{conjecture}[{\cite[Conjecture 1]{diwan1971}}]
If $H$ is a planar maximal 3-degenerate graph, then every graph with minimum degree at least $|V(H)|-1$ contains a subdivision of $H$.
\end{conjecture}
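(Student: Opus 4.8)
The plan is to first replace the hypothesis class by a concrete recursive one. Since a planar graph has at most $3|V(H)|-6$ edges with equality exactly for triangulations, while a maximal $3$-degenerate graph on $t:=|V(H)|$ vertices has exactly $3t-6$ edges, every planar maximal $3$-degenerate graph is a triangulation. In a $3$-degenerate triangulation on at least four vertices there is a vertex $v$ of degree exactly $3$; the three faces at $v$ are triangles, so the three neighbors of $v$ form a triangle, and deleting $v$ merges these faces into a single triangular face, leaving a smaller $3$-degenerate triangulation. Iterating this, $H$ is a \emph{stacked triangulation}: there is a construction order $u_1,\dots,u_t$ in which $u_1u_2u_3$ is a triangle and, for each $k\geq 4$, the vertex $u_k$ is adjacent to exactly the three vertices of a triangular face of the triangulation induced by $\{u_1,\dots,u_{k-1}\}$ (so $\{u_1,u_2,u_3,u_4\}$ spans a $K_4$). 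This order is the undirected, ``degree-three'' analogue of the recursive orderings of Section~\ref{sec:contractibleorderings}, and it is what I would induct along.

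Second, I would set up a \emph{partial-subdivision} lemma in the spirit of Lemma~\ref{lem:contractableorder}, arguing by a minimal counterexample. The data maintained would be: branch vertices $b_1,\dots,b_s\in V(G)$ in bijection with an initial segment $\{u_1,\dots,u_s\}$ of the construction order, together with internally disjoint paths realizing every edge of $H[\{u_1,\dots,u_s\}]$ as a subdivision, with $U\subseteq V(G)$ denoting the (branch vertices and path-interior) vertices already used. The minimum-degree hypothesis would be imposed as the invariant that every vertex of $V(G)\setminus U$ still has degree at least $t-1$ in $G$, which is automatic globally since we only \emph{mark} vertices used rather than delete them. The role played by ``well-connectedness of $S$'' in Lemma~\ref{lem:contractableorder} would be played here by a robustness invariant on the \emph{active triangle}: when $u_{s+1}$ is to be stacked onto the face $\{u_a,u_b,u_c\}$, I would require that, in the graph $G-(U\setminus\{b_a,b_b,b_c\})$, there is no separation of order less than $3$ detaching $\{b_a,b_b,b_c\}$ from the unused bulk. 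As in the paper, a minimal counterexample would first be shown to admit no such small separation (otherwise one contracts along a Menger linkage and recurses on a strictly smaller instance).

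Third comes the extension step, which is where subdivisions genuinely diverge from the minor argument. To stack $u_{s+1}$ I must find a \emph{new} vertex $w\in V(G)\setminus U$ together with a $3$-fan, that is, three internally disjoint paths from $w$ to $b_a$, $b_b$, $b_c$ with interiors in $V(G)\setminus U$; then $w=b_{s+1}$ becomes a genuine branch vertex. Under the robustness invariant, Menger's theorem would supply three disjoint paths from the triangle into the unused part, and the minimum-degree condition on $V(G)\setminus U$ would be used both to produce a candidate apex $w$ of the fan and to redirect these paths to meet at a single vertex. The crucial contrast with Theorem~\ref{thm:mainmindeg} is that there, high-degree vertices of $H$ are permitted to have non-singleton branch sets and are handled by the \emph{contraction} clause of a contractible ordering; for a subdivision every branch vertex must be an honest single vertex of $G$, so the high-degree vertices of the stacked triangulation cannot be dissolved by contraction and must instead accumulate all their incident subdivision-paths at one point of $G$.

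The hard part will be maintaining the robustness invariant while the used set $U$ grows: as later vertices stack onto faces incident with an earlier $b_i$, that branch vertex accumulates up to $\deg_H(u_i)$ incident paths, so its neighborhood in $G$ becomes saturated by $U$, and routing a fresh $3$-fan to a triangle buried deep inside such a crowded region is precisely the obstruction that the tight bound $t-1$ leaves no slack to overcome — this is why general topological-minor machinery (which needs minimum degree polynomial in $t$) does not apply. I would therefore first test the approach on the outerplanar case, Conjecture~\ref{conj:apexouterplanar}, where the stacking tree is a caterpillar-like structure and the active triangle can be kept on a controlled frontier (recovering and extending Corollary~\ref{cor:D2subdiv_intro}, whose only high-degree vertex is the apex), and only afterwards attempt stacked triangulations whose ``interior'' vertices force the fan to be routed through heavily used regions.
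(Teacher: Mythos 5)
You are attempting to prove a statement that the paper records as an \emph{open conjecture}: this is Diwan's Conjecture, quoted by the authors only to motivate Conjecture~\ref{conj:apexouterplanar}, and the paper contains no proof of it (nor does the literature, as far as the authors indicate). So there is no ``paper proof'' to match; the question is whether your argument stands on its own, and it does not. Your first step is sound --- planar maximal $3$-degenerate graphs are exactly the stacked triangulations, by the edge count $3t-6$ forcing a triangulation and the degree-$3$ vertex argument --- but everything after that is a program rather than a proof. The extension step, in which you must find a fresh vertex $w \in V(G)\setminus U$ together with a $3$-fan to $\{b_a,b_b,b_c\}$ with interiors avoiding $U$, is asserted to follow from Menger plus the degree condition, yet Menger only yields three disjoint paths \emph{from} the triangle into the unused part; making them meet at a single new vertex is exactly the unsolved crux, and you concede as much in your final paragraph (``the hard part will be maintaining the robustness invariant''). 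A proposal that names its own missing step as the hard open part is not a proof. Moreover, your invariant is weaker than you claim: since you mark vertices rather than delete them, ``degree at least $t-1$ in $G$'' says nothing about the degree of a vertex of $V(G)\setminus U$ \emph{into} $V(G)\setminus U$; a candidate apex $w$ may have nearly all its neighbors buried in $U$, and you give no mechanism (analogous to the contraction clause of a contractible ordering, which is what keeps degrees intact in the proof of Lemma~\ref{lem:contractableorder}) to restore the invariant as $U$ grows.

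There is also a concrete step that fails as stated: the reduction ``otherwise one contracts along a Menger linkage and recurses on a strictly smaller instance,'' imported from Claim~\ref{lem:contractibleorder:claim1}. That reduction is valid for \emph{minors} because the contracted paths are simply absorbed into branch sets when the minor is pulled back. For subdivisions it breaks: if a contracted path $P_i$ corresponds after recursion to a branch vertex $b_j$ of the active triangle, and $u_j$ has degree at least $4$ in $H$ (interior vertices of stacked triangulations have unbounded degree), then the incident subdivision-paths of the recursive solution may attach at \emph{distinct} vertices of $P_i$, and uncontracting does not produce a subdivision --- a branch vertex must be a single vertex of $G$. This is precisely why the paper extracts subdivision statements from Theorem~\ref{thm:mainmindeg} only when every vertex with a possibly non-singleton branch set is subcubic (Corollary~\ref{cor:D2subdiv_intro}): a branch set that is a path can be traversed by the at most three incident subdivision-paths, but not by four or more. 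Any proof of Diwan's conjecture must either avoid the contraction reduction entirely or keep all high-degree branch vertices as honest singletons throughout the recursion, and your proposal supplies no device for doing so under the tight bound $|V(H)|-1$.
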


In fact, we are not aware any example of a planar graph $H$ which does not satisfy Question \ref{q:mindegree}, or Question \ref{q:subdivmindeg}.

For digraphs, Mader's conjecture (Conjecture \ref{conj:mader}) remains open, and several related problems are given in \cite{aboulker2019subdivisions, gishboliner2022dichromatic, gishboliner2022oriented}. 
We conclude with another conjecture on orientations of cycles.
Aboulker et al.\ conjectured \cite[Conjecture 27]{aboulker2019subdivisions} that every digraph with minimum out-degree at least $2t-1$ contains every orientation of the cycle on $t$ vertices as a subdivision.
This conjecture remains open.
Gishboliner et al.\ \cite{gishboliner2022oriented} proved that there is a polynomial function $f$ such that every digraph with minimum out-degree at least $f(t)$ satisfies this conclusion.
The same set of authors \cite{gishboliner2022dichromatic} also proved that every digraph with dichromatic number at least $t$ contains every orientation of the cycle on $t$ vertices as a subdivision.
Since every digraph with dichromatic number at least $t$ contains a subdigraph with minimum out-degree and minimum in-degree at least $t-1$, we propose the following common strengthening of those results.

\begin{conjecture}
    Every digraph with minimum out-degree and minimum in-degree at least $t-1$ contains every orientation of the cycle on $t$ vertices as a subdivision.
\end{conjecture}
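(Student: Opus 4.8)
The plan is to reduce to the block structure of the oriented cycle and then realise it by an extremal, partial-structure argument in the spirit of Lemmas \ref{lem:apexarbor} and \ref{wheel_directed_lemma}, exploiting the symmetry of the two degree hypotheses. First I would fix an orientation $\vec C$ of the cycle on $t$ vertices and decompose it into its maximal directed subpaths (blocks). Traversing $\vec C$, the direction reverses an even number $2b$ of times, at $b$ vertices of local out-degree $2$ (sources) and $b$ of local in-degree $2$ (sinks), alternating cyclically as $s_1,t_1,\dots,s_b,t_b$, with forward blocks $s_i\rightsquigarrow t_i$ and $s_{i+1}\rightsquigarrow t_i$. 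When $b=0$ we have $\vec C=\vec C_t$, and the statement is immediate: a standard argument (a longest directed path ends at a vertex all of whose $\geq t-1$ out-neighbours lie on it, forcing a chord far enough back) shows that minimum out-degree $\geq t-1$ produces a directed cycle of length $\geq t$, which is a subdivision of $\vec C_t$. So I would assume $b\geq 1$; the goal becomes to locate $2b$ distinct branch vertices realising the $s_i$ and $t_j$, together with $2b$ internally disjoint directed paths realising the blocks, each of length at least that of the corresponding block. Since these block lengths are only lower bounds, the length constraints should be the easy part once strong connectivity is available.

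For $b\geq 1$ I would open the cycle at one sink, say $t_b$, so that $\vec P:=\vec C-t_b$ is an oriented path with two designated \emph{out-ends} $p_1,p_2$ (the truncated ends of the two blocks that pointed into $t_b$). Following the paper's template, I would maintain a core $S\subseteq V(\vec G)$ carrying an embedding of an initial segment of $\vec P$ that is \emph{simultaneously} well-out-connected and well-in-connected with the appropriate parameter, and grow $S$ one vertex at a time: using the minimum out-degree to extend along forward blocks and the minimum in-degree to extend along backward blocks, opening a new block whenever a source or sink of $\vec P$ is reached. Because block lengths are lower bounds, such an extension is always permitted, exactly as in the growth steps of Lemmas \ref{lem:apexarbor} and \ref{wheel_directed_lemma}. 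Once all of $\vec P$ is embedded with $p_1,p_2$ on the boundary, I would close the cycle by producing a fresh vertex $w$ (the image of $t_b$) together with two internally disjoint directed paths $p_1\rightsquigarrow w$ and $p_2\rightsquigarrow w$ that are internally disjoint from $\vec P$; these I would obtain by growing two directed out-frontiers from $p_1$ and $p_2$ inside $\vec G-(V(\vec P)\setminus\{p_1,p_2\})$ until they collide, the collision vertex serving as $w$, with a Menger/disjoint-path extraction guaranteeing the final linkage from the well-connectedness of the core.

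The main obstacle is the \emph{two-sided} connectivity bookkeeping, which has no counterpart in the paper. In Lemmas \ref{lem:apexarbor} and \ref{wheel_directed_lemma} the target structure flows in a single direction out of the core, so one-sided well-in-connectivity suffices and the disjoint-path systems never compete. An oriented cycle with $b\geq 1$ reversals imposes incoming demands (at the sinks) and outgoing demands (at the sources) on the core at the same time, and the path systems used for the forward blocks, the backward blocks, and the two closing paths to $w$ must all be kept vertex-disjoint from one another. Controlling this with only the tight bound $t-1$ — rather than the bound $2(k_1+k_2)-1$ of Aboulker et al.\ even in the simplest case $b=1$ — is the crux: a naive Menger argument routes one direction, but the opposite system can be forced to collide with it or back into the large embedded path $\vec P$. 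I expect the resolution to hinge on (i) choosing which branch vertex to delete, namely a sink minimising the simultaneous two-sided obligations down to the single pair $p_1,p_2$, and (ii) an induction on $b$ interleaved with the growth that peels off one source--sink pair at a time while certifying that the untouched part of $\vec G$ retains both degree conditions, so that a doubly well-connected core can always be re-established.
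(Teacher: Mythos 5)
This statement is posed in the paper as an open conjecture (the final conjecture of Section \ref{sec:conclusion}); the paper offers no proof of it, so your proposal cannot be matched against one --- it must stand on its own as a proof, and it does not. You have written a research plan, not a proof, and you correctly identify the fatal gap yourself before deferring it: the simultaneous two-sided linkage. Everything that makes the paper's machinery work in Lemmas \ref{lem:apexarbor} and \ref{wheel_directed_lemma} is one-directional. In the reduction step of Claim \ref{lem:apexarbor:claim1}, one deletes the entire $A$-side and contracts the Menger paths toward $S$; the surviving vertices keep their out-degrees precisely because all demands flow \emph{into} the core, so nothing on the deleted side is ever needed again. With a core carrying both in-demands (at sinks) and out-demands (at sources), neither side of a separation can be discarded: fixing well-in-connectedness by contracting paths toward $S$ can destroy the out-linkages and out-degrees needed on the other side, and vice versa. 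Your plan to ``maintain a core that is simultaneously well-out-connected and well-in-connected'' names the invariant but gives no reduction step that preserves it, and it is exactly the reduction steps --- not the growth steps --- that carry the paper's proofs.

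The deeper obstruction is that your closing and block-routing steps require pairwise vertex-disjoint directed paths joining \emph{specified} terminal pairs (the $2b$ blocks, plus the two paths $p_1\rightsquigarrow w$ and $p_2\rightsquigarrow w$ avoiding $V(\vec P)\setminus\{p_1,p_2\}$). Menger's theorem gives disjoint paths from a set to a set, which suffices in the paper because the apex/wheel structures only ever need a fan from or to a single vertex; it does not give a linkage pairing prescribed endpoints. In digraphs no amount of connectivity forces such linkages --- Thomassen constructed strongly $k$-connected digraphs that are not $2$-linked, for every $k$ --- so ``grow connectivity, then extract by Menger'' cannot, even in principle, produce the two internally disjoint paths into your collision vertex $w$ or keep the forward- and backward-block systems apart. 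This is why already the case $b=1$ (cycles with two blocks) is open at tight bounds in the out-degree-only setting, and why the paper's Corollary on $C(k_1,k_2)$ proves only an \emph{apexed} strengthening, where the added source converts the two-terminal linkage into a single-vertex fan. Your item (ii), inducting on $b$ while ``certifying that the untouched part of $\vec G$ retains both degree conditions,'' faces the same problem: deleting the used paths can destroy in- or out-degrees arbitrarily, and the tight bound $t-1$ leaves no slack to absorb this. In short: the skeleton (block decomposition, correct handling of $b=0$, opening at a sink) is sensible, but the crux you flag is the entire content of the conjecture, and the proposal contains no idea that addresses it.
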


\bigskip

\bigskip

\noindent{\bf Acknowledgement:}
This work was initiated when the first author participated Second 2022 Barbados Graph Theory Workshop.
He thanks the participants of the workshop, especially Sepehr Hajebi, Paul Seymour, Zixia Song and Raphael Steiner, for discussions.
This paper was partially written when the first author visited the National Center for Theoretical Sciences Mathematics Division in Taiwan and the Institute of Mathematics at Academia Sinica in Taiwan.

\bibliography{ref} 
\bibliographystyle{habbrv}

\end{document}